\newcommand{\linep}[2]{ \ensuremath{               %
 \begin{xy}                                        
	(0,-2)*[o]+=<7pt>{\scriptstyle #1}*\frm{o}="a",
	(4,2)*[o]+=<7pt>{\scriptstyle #2}*\frm{o}="b", 
  "a";"b"**\dir{-}?>*\dir{>},                      
 \end{xy}                                          %
} }
\newcommand{\clinep}[2]{ \ensuremath{             %
 \begin{xy}                                       
	 (0,-2)*[o]+U{\scriptstyle #1}*+\frm{.o}="a", 
	 (6,2)*[o]+U{\scriptstyle #2}*+\frm{.o}="b",  
  "a";"b"**\dir{-}?>*\dir{>},                     
 \end{xy}                                         %
} }
\newcommand{\cllinep}[2]{ \ensuremath{            %
 \begin{xy}                                       
	 (0,-2)*[o]{\scriptstyle #1}*+\frm{.e}="a",   
	 (14,2)*[o]{\scriptstyle #2}*+\frm{.e}="b",   
  "a";"b"**\dir{-}?>*\dir{>},                     
 \end{xy}                                         
} }
\newcommand{\clinem}[2]{ \ensuremath{             %
 \begin{xy}                                       
	 (0,-2)*[o]+U{\scriptstyle #1}*+\frm{.o}="a", 
	 (6,2)*[o]+U{\scriptstyle #2}*+\frm{.o}="b",  
  "b";"a"**\dir{-}?>*\dir{>},                     
 \end{xy}                                         %
} }
\newcommand{\cllinem}[2]{ \ensuremath{            %
 \begin{xy}                                       
	 (0,-2)*[o]{\scriptstyle #1}*+\frm{.e}="a",   
	 (14,2)*[o]{\scriptstyle #2}*+\frm{.e}="b",   
  "b";"a"**\dir{-}?>*\dir{>},                     
 \end{xy}                                         %
} }
\newcommand\upx[3]{\begin{xy}(0,#3)*{#2};(0,3)*{#1}\end{xy}}
\newcommand{\T}{ \ensuremath{      
	\mathbb{T}
} }
\newcommand{\Lie}{ \ensuremath{    
	\mathbb{L}
} }
\newcommand{\R}{ \ensuremath{      
	\mathbb{P}
} }
\newcommand{\G}{ \ensuremath{      
	\mathbb{G}
} }
\newcommand{\Aop}{ \ensuremath{    
	\mathcal{A}\!\mathit{s}
} }
\newcommand{\Lop}{ \ensuremath{    
	\mathcal{L}\!\mathit{ie}
} }
\newcommand{\Rop}{ \ensuremath{    
	\mathit{pre}\!\mathcal{L}\!\mathit{ie}
} }
\newcommand{\Gop}{ \ensuremath{    
	\mathcal{G}\!\mathit{r}
} }
\newcommand{\Op}{ \ensuremath{     
  \mathcal{O}
} }
\newtheorem{theorem}{Theorem}[section]
\newtheorem{proposition}[theorem]{Proposition}                          
\newtheorem{lemma}[theorem]{Lemma}
\theoremstyle{definition}
\newtheorem{definition}[theorem]{Definition}
\newtheorem{corollary}[theorem]{Corollary}                          
\newtheorem{example}[theorem]{Example}
\theoremstyle{remark}
\newtheorem{remark}[theorem]{Remark}
\numberwithin{equation}{section}
\theoremstyle{definition}                          
\theoremstyle{remark}                          
\newtheorem*{notation}{Notation}
\begin{document}

\title{Lie Algebra Configuration Pairing}

\author[B. Walter]{Ben Walter} 
\address{
Department of Mathematics \\ Middle East Technical University, Northern Cyprus Campus \\
Kalkanli, Guzelyurt, KKTC, Mersin 10 Turkey
}
\email{benjamin@metu.edu.tr}

\subjclass{17B01, 17B35, 17B62; 16T15, 18D50.}
\keywords{Lie coalgebras, Lie algebras, Associative algebras, preLie algebras}


\begin{abstract}
 	We give an algebraic construction of the topological graph-tree configuration pairing
    of Sinha and Walter beginning with the classical presentation of Lie
    coalgebras via coefficients of words in the associative Lie polynomial.
	Our work moves from associative algebras to preLie algebras to the graph 
	complexes of Sinha and Walter, 
    justifying the use of 
    graph generators for Lie coalgebras by iteratively expanding the set of 
    generators until the set of relations collapses to two simple local expressions.
    Our focus is on new computational methods allowed by this framework and
    the efficiency of the graph presentation in proofs and calculus involving
    free Lie algebras and coalgebras.
	This outlines a new way of understanding and calculating with Lie algebras
    arising from the graph presentation of Lie coalgebras. 
\end{abstract}

\maketitle

%
%

\section*{Introduction}

The configuration pairing of graphs and trees has its genesis in 
\cite{Sinh06} as an explicit
geometric description of the homology/cohomology
pairing for configuration spaces and the disks operad.  
Cycles in configuration space homology
are realized by submanifolds, where points in configurations orbit each other
organized into systems and galaxies.  Cohomology cocycles check whether certain 
arrangements of points can ever occur in a homology galaxy.  Algebraically,
homology galaxies are encoded as trees and cohomology cocycles are
written as directed graphs.  
Anti-symmetry
and Jacobi expressions of trees bound, and so vanish in homology; dually 
arrow-reversing and Arnold expressions of graphs cobound in cohomology.
The homology of configuration spaces is the
Poisson operad (which can be expressed as forests of trees),
so cohomology gives a presentation of the Poisson cooperad (expressed with graphs).  
Restricting to connected objects, this duality descends to
an equivalence of Lie coalgebras and directed graphs modulo
arrow-reversing and Arnold.  This is exploited in \cite{SiWa07}, \cite{SiWa09},
and \cite{Walt10}.  For more information on how to construct the graph cooperad
without reference to operads, the interested reader may consult \cite{Walt13}.

In this paper we give an alternate view of the configuration pairing 
between Lie algebras and graphs, grounded
not in topology but algebra.  There is a commutative diagram
of functors of categories (described via operad maps in Appendix~\ref{A:operad}).
\begin{equation*}
\begin{aligned}
\begin{xy}
	(0,0)*+<20pt,10pt>{\text{Lie algebras}}="l",
	(90,0)*+{\text{Associative algebras}}="a",
	(50,5)*+{\text{preLie algebras}}="r",
	(10,10)*+{\text{Graph algebras}}="g",
	"l";"a"**\dir{-}?>*\dir{>}?<*\dir{>},
	"l";"r"**\dir{-}?>*\dir{>}?<*\dir{>},
	"l";"g"**\dir{-}?>*\dir{>}?<*\dir{>},
	"g";"r"**\dir{-}?>*\dir{>>},
	"r";"a"**\dir{-}?>*\dir{>>}
\end{xy}
\end{aligned}
\end{equation*}
The map from Lie algebras to associative algebras is the 
universal enveloping algebra map. 
By analogy, we call the other 
maps from Lie algebras also ``universal enveloping'' maps --
they are all adjoint to functors forgetting
structure in the other direction and are defined similar to the
universal enveloping (associative) algebra map. 
The maps marked $\twoheadrightarrow$ are quotient maps on algebras.
Up to slight tweaks to coalgebra structure, we construct a dual 
diagram in coalgebras.

The dual maps to Lie coalgebras yield presentations of Lie coalgebras
as quotients of associative, preLie, and graph coalgebras.
The duality of Lie algebras with the associative coalgebra presentation
of Lie coalgebras
is given by computing coefficients in the associative polynomial of a
Lie expression.  
The duality of Lie algebras with the graph presentation of Lie coalgebras 
is the topological generalized Hopf invariant pairing of
\cite{SiWa09}.
The preLie presentation
interpolates between these two.

As an example of the power of the graph framework, we use it to 
gain a trivial proof of a deep description of Lie coalgebras
(which, while not explicitly stated previously in the literature, 
 could be inferred from~\cite{MeRe89} or~\cite{Reut93}).
Corollary~\ref{C:ass L^*} states that free Lie coalgebras 
are no more than anti-commutative coalgebras
with the correct set of primitives.  
At the end of Section~\ref{S:graphs}, we also give a new 
method for computing coefficients in the associative polynomial 
of a Lie bracket
(Proposition~\ref{P:ass combinatorial def}) induced by the 
combinatorial definition of the configuration pairing of graphs and trees.
In later work we will give 
further applications computing in Lie algebras 
and shuffle algebras. 

\medskip

%
%

The paper is organized as follows.  
Section~\ref{S:ass} recalls the classical situation of Lie 
algebras and universal enveloping associative algebras.  We arrange ideas and 
notation anticipating our later sections.  
In the context of associative algebras, the configuration pairing
of an associative word and Lie bracket expression is the coefficient of the word in 
the associative (noncommutative) polynomial of the Lie bracket expression.
Material in this section is 
all classical aside possibly from the presence of Lie and associative coalgebra 
structures in Proposition~\ref{P:ass compat} and Corollary~\ref{C:ass L^*}.
The use of coalgebra structures in Lie polynomial coefficient computations 
in Example~\ref{E:comp} may also be new, though it is not deep.

In Section~\ref{S:prelie} we work analogous to Section~\ref{S:ass}, but 
with preLie structures.  We spend more time on preliminaries since we
expect this to be less familiar territory.  Our construction of Lie 
coalgebra structures from preLie structures is similar in motivation to 
\cite{Mark06}, though we work with duals and use the configuration 
pairing.
In this setting we develop two views of the configuration pairing.
The algebraic configuration pairing is defined similar to Section~\ref{S:ass},
as reading the coefficient of a preLie element in the formal preLie polynomial of
a Lie bracket expression.  The combinatorial configuration pairing is defined
in terms of vertex-labeled rooted preLie trees in the spirit of \cite{SiWa07}. 

Section~\ref{S:graphs} is motivated by the presentation of Lie coalgebras
via preLie coalgebras given at the end of Section~\ref{S:prelie}.
A theory analogous to the previous sections holds for graph algebras. 
Moving from preLie to graphs enlarges the number of generators in the resulting
presentation of Lie coalgebras, but greatly simplifies the set of relations.
A simpler presentation of Lie coalgebras leads to greatly streamlined proofs.
In fact, we leave the final step of the proofs of a few propositions in
sections~\ref{S:ass} and \ref{S:prelie}
until section~\ref{S:graphs}, when they become simple observations of  
structure induced from graph algebras via quotient maps 
to preLie algebras and associative algebras.

In the Appendix~\ref{A:operad} we give a few operad-level constructions 
and proofs.  We also give 
a description of the full graph algebra and coalgebra structure, which 
we omit from Section~\ref{S:graphs} for simplicity. 
Appendix~\ref{A:coalg} gives short proofs of a few basic coalgebra lemmas
for the benefit of neophyte readers.

Throughout, we will assume that our algebras have underlying 
$k$-vector spaces.  
In particular, we make frequent use of the free algebra
maps from $k$-vector spaces to algebras.  
For brevity, we write $\otimes$ for $\otimes_k$.
In remarks, we discus interpretations
of definitions and propositions, given a chosen basis 
$B = \{b_i\}_{i\in I}$ of a $k$-vector 
space $V$. 
Furthermore, in the interest of all constructions naturally 
connecting to Lie algebras, all of our algebras and coalgebras 
will be without unit and counit.

\medskip

The genesis of this paper was notes of a talk concieved, developed,
and given all during the course of the week-long Workshop on Operads in 
Homotopy Theory (2010) in Lille.  In the intervening time many elements have 
been cleaned and refined.

%
%

\section{The Configuration Pairing with Associative Coalgebras}\label{S:ass}

We recall the classical theory of Lie algebras and their universal enveloping
algebras, setting notation for later sections, 
and carefully developing the linear dual of the universal enveloping 
algebra map (to avoid concerns about infinite dimensional coalgebra 
structures).  

Given a $k$-vector space $V$,
$\T V=\bigoplus_{n\ge 1} V^{\otimes n}$ is the free nonunital associative 
algebra on $V$.  Write the word $x_1\cdots x_n$ for the homogeneous element 
$x_1\otimes\cdots\otimes x_n\in \T V$. 
The universal enveloping algebra of a Lie algebra is 
$U_AL = \T L/\!\!\!\sim$, 
where $[x,y]\sim xy-yx$.  
Given a Lie algebra $L$, write $p_A:L\to U_A L$ for the composition
$L\rightarrowtail \T L\twoheadrightarrow U_AL$.
The map $p_A$ sends Lie elements to their the associative 
Lie polynomials. 
Note that $p_A$ is not an algebra map unless we twist $\T V$ to have the
anti-commutative product: $\mu(x, y) = xy - yx$.
Write $\Lie V$ for the free Lie algebra on $V$ and recall the 
classical isomorphism $U_A\Lie V \cong \T V$. 
We are interested in the map $p_{A}:\Lie V\to U_A\Lie V \cong \T V$ and its dual.
Let $(\T V)^*$ and $(\Lie V)^*$ be the vector space duals of $\T V$ and $\Lie V$.

\begin{remark}
	In order to have honest coalgebra structures on duals, we must enforce 
    a finiteness condition when dualizing to $(\T V)^*$.
    For example, we could restrict all functionals $\psi \in (\T V)^*$ 
    to have finite dimensional support.
	Alternately we could weaken the definition of coalgebra to allow formal 
	sums in the coproduct operation.  The coproduct operation then lands in
	the completed tensor product $\Delta:C \to C\,\widehat{\otimes}\;C$ and
	the coalgebra axioms are all modified accordingly.

	Throughout this paper, all constructions will be grounded via pairings 
	with algebras.  Due to finiteness conditions on the algebra side, 
	these pairings will never involve infinite sums of nonzero
	elements, no matter which alternative for a specific 
	construction / definition of dual is adopted.  Since our goal is 
	an understanding of algebras via our explicit pairings, we 
	structure our statements and proofs to sidestep issues related to 
    infinite dimensional coalgebras.  
\end{remark}

\begin{notation}
    We write $\psi \in (\T V)^*$ for a generic functional on $\T V$ 
    [with finite dimensional support].  We use $\omega$ for 
    homogeneous elements of $\T V$ (words $x_1\cdots x_n$) and 
    $\omega^* \in (\T V)^*$
    for dual functional of the word $\omega$. 
    Homogeneous elements of $(\T V)^*$ are the elements of the form $\omega^*$
    for some homogeneous $\omega \in \T V$.
    We write $\ell\in\Lie V$ for generic Lie bracket expressions.
\end{notation}

\begin{definition}\label{D:ass}
	Define the vector space pairing 
	$\langle -,-\rangle:(\T V)^*\otimes \Lie V \to k$ to be 
	$\langle \psi,\,\ell\rangle = \psi\bigl(p_A(\ell)\bigr)$.

	Let $\eta_A: (\T V)^* \to (\Lie V)^*$ be the associated map $\psi\mapsto \langle \psi,-\rangle$.
\end{definition}

\begin{remark}
	The map $\eta_A$ is the dual of $p_A$ as a map of 
	vector spaces.

	If $V$ has a chosen basis $B$, then we may canonically write 
    elements of $\T V$ as linear combinations of words in the alphabet $B$.
    Thus the associative Lie polynomial for a Lie bracket becomes 
    $p_A(\ell) = \sum_i c_i\omega_i$ where the $\omega_i$ are words in $B$. 
	In this case $\langle \omega_i^*,\, \ell\rangle = \omega_i^*(p_A(\ell)) = c_i$,  
	the coefficient of the word $\omega_i$ in the associative Lie polynomial $p_A(\ell)$.
    More generally $\eta_A(\omega^*) \in (\Lie V)^*$ is the functional which 
    reads off the $\omega$ coefficient of associative Lie polynomials.
 \end{remark}

The usual product on $\T V$ induces via duality
a coalgebra structure on $(\T V)^*$ which operates on 
homogeneous elements by cutting a word at 
all possible positions (yielding nonempty subwords) 
$\Delta \omega^* = \displaystyle \sum_{\omega=\alpha\beta} \alpha^*\otimes \beta^*$.
Due to the definition of $p_A$, the map $\eta_A$ will be a map of 
coalgebras only after twisting the coalgebra structure of $(\T V)^*$ to be
anti-commutative.  Define the cobracket of $\psi \in (\T V)^*$ to be\ 
$]\psi[\ = \Delta\psi - \tau\Delta\psi$ where $\tau$ is the twist map.
On homogeneous elements the cobracket is 
$\displaystyle ]\,\omega^*\![\ = \sum_{\omega=\alpha\beta} 
   (\alpha^*\otimes \beta^* - \beta^*\otimes \alpha^*)$.

\begin{proposition}\label{P:ass compat}
	If $\psi \in (\T V)^*$ and $\ell_1,\ell_2\in \Lie V$, then
	$$\bigl\langle \psi,\, [\ell_1, \ell_2]\bigr\rangle \ = \ 
	\sum_{i} \langle \alpha_i,\, \ell_1\rangle\,\langle \beta_i,\,\ell_2\rangle,$$
	where $]\psi[\ = \sum_i \alpha_i\otimes \beta_i$.
\end{proposition}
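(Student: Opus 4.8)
The plan is to unwind both sides of the claimed identity using two facts: that $p_A$ preserves brackets, and that the coproduct on $(\T V)^*$ is linearly dual to the concatenation product of $\T V$.

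First I would note that $p_A\colon\Lie V\to\T V$ is by definition the canonical map $\Lie V\to U_A\Lie V$ (followed by the isomorphism $U_A\Lie V\cong\T V$), and the canonical map $L\to U_AL$ is always a morphism of Lie algebras — even though the inclusion $L\hookrightarrow\T L$ is not, the relation $[x,y]\sim xy-yx$ imposed in $U_AL$ repairs this. Here $\T V$ carries the commutator bracket $[a,b]=ab-ba$. Hence $p_A([x,y])=p_A(x)\,p_A(y)-p_A(y)\,p_A(x)$ in $\T V$, and applying $\psi$ gives
\[
	\langle\psi,[x,y]\rangle \;=\; \psi\bigl(p_A(x)\,p_A(y)\bigr)\;-\;\psi\bigl(p_A(y)\,p_A(x)\bigr).
\]

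Next I would record the duality between $\Delta$ and the product of $\T V$: since $\Delta w^*=\sum_{w=ab}a^*\otimes b^*$ cuts a word at every position while concatenation glues words, one has $\psi(uv)=\sum_j\alpha'_j(u)\,\beta'_j(v)$ for all $u,v\in\T V$, where $\Delta\psi=\sum_j\alpha'_j\otimes\beta'_j$. (As in the Remark, finiteness is harmless here: $p_A(x)$ and $p_A(y)$ are fixed elements of bounded length, so only finitely many terms of $\Delta\psi$ contribute.) Substituting $u=p_A(x),\,v=p_A(y)$ into the first term of the display and $u=p_A(y),\,v=p_A(x)$ into the second, and using $\langle\gamma,z\rangle=\gamma\bigl(p_A(z)\bigr)$, the right-hand side becomes $\sum_j\langle\alpha'_j,x\rangle\langle\beta'_j,y\rangle-\sum_j\langle\beta'_j,x\rangle\langle\alpha'_j,y\rangle$. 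Since $]\psi[\ =\Delta\psi-\tau\Delta\psi=\sum_j\bigl(\alpha'_j\otimes\beta'_j-\beta'_j\otimes\alpha'_j\bigr)$, this last expression is precisely $\sum_i\langle\alpha_i,x\rangle\langle\beta_i,y\rangle$ for $]\psi[\ =\sum_i\alpha_i\otimes\beta_i$, which finishes the argument.

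I do not expect a serious obstacle. The only points needing care are the identification of $p_A$ with the canonical (hence bracket-preserving) Lie-algebra map, and the clean statement that the coproduct on $(\T V)^*$ is dual to concatenation; the remainder is bookkeeping, and the minus sign deliberately built into the cobracket $]\psi[$ is exactly what is required to match the commutator $[x,y]=xy-yx$.
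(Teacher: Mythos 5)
Your proposal is correct and follows essentially the same route as the paper: the paper's proof is exactly the observation that $p_A([x,y]) = p_A(x)p_A(y) - p_A(y)p_A(x)$, with the dualization of the coproduct against concatenation (which you spell out) left implicit. Your added detail on the finiteness issue and the sign matching of $]\cdot[$ with the commutator is fine but not a different argument.
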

\begin{proof}
	This follows immediately from 
	$p_A([\ell_1,\ell_2]) = p_A(\ell_1)p_A(\ell_2) - p_A(\ell_2)p_A(\ell_1)$ for
	homogeneous Lie bracket expressions $\ell_1$ and $\ell_2$ and the definition
    of the pairing $\langle-,-\rangle$.
\end{proof}

By Proposition~\ref{P:ass compat} the pairing $\langle-,\,-\rangle$ becomes a  
coalgebra/algebra pairing if $(\T V)^*$ is given the cobracket coalgebra structure.
From now on, we will always equip $(\T V)^*$ with the cobracket coalgebra structure.

\begin{remark}
Proposition~\ref{P:ass compat} gives a method for recursive calculation of
coefficients for Lie bracket expressions $p_A([\ell_1,\,\ell_2])$ using 
cobrackets of words.  
The coefficient of a word $\omega$ in the 
associative polynomial $p_A\bigl([\ell_1,\ell_2]\bigr)$
is given by 
\[ \Bigl\langle \omega^*,\, [\ell_1, \ell_2]\Bigr\rangle = 
   \sum_{\omega=\alpha\beta} 
               \langle \alpha^*,\, \ell_1\rangle\, \langle \beta^*,\, \ell_2\rangle 
             - \langle \beta^*,\, \ell_1\rangle\, \langle \alpha^*,\, \ell_2\rangle\]
Note that the sum above involves at most two nonzero terms, 
since $\langle \omega^*,\ell\rangle = 0$ unless the lengths 
of $\omega$ and $\ell$ match.
    
\begin{example}\label{E:comp}
  For example
  the coefficient of $abbba$ in $p_A([[[b,a],b],[a,b]])$ may be computed 
  as follows.
  \begin{align*}
	\bigl\langle abbba^*,\, [[[b,a],b],[a,b]]\bigr\rangle &= 
	\bigl\langle abb^*,\,[[b,a],b]\bigr\rangle\,\bigl\langle ba^*,[a,b]\bigr\rangle\ \\ 
       &\quad -\ 
	  \bigl\langle bba^*,\,[[b,a],b]\bigr\rangle\,\bigl\langle ab^*,[a,b]\bigr\rangle \\
  &= \Bigl(\bigl\langle ab^*,\,[b,a]\bigr\rangle\,\langle b^*,\,b\rangle\ -\
    \bigl\langle bb^*,\,[b,a]\bigr\rangle\,\langle a^*,\,b\rangle\Bigr)(-1)  \\
      &\quad -\ 
    \Bigl(\bigl\langle bb^*,\,[b,a]\bigr\rangle\,\langle a^*,\,b\rangle\ -\ 
    \bigl\langle ba^*,\,[b,a]\bigr\rangle\,\langle b^*,\,b\rangle\Bigr)(1)\\
  &= \bigl((-1)(1) + (0)(0)\bigr)(-1) \\
    &\quad - \bigl((0)(0) - (1)(1)\bigr)(1) = 2.
	\end{align*}
\end{example}

	An alternate method of computing $\langle-,-\rangle$ will follow from our
	work in Section~\ref{S:prelie} 
	(see Proposition~\ref{P:prelie geom conf} and Example~\ref{E:alt comp}).
    Another, more general method follows from work in Section~\ref{S:graphs}
    (see Proposition~\ref{P:ass combinatorial def} and 
    Example~\ref{E:ass combinatorial def}).
\end{remark}

The map $p_A$ to universal (associative) enveloping algebras is an injection
by a simple corollary of the Poincar\'e-Birkhoff-Witt theorem.  Thus
Proposition~\ref{P:ass compat} has the following corollary.

\begin{corollary}\label{C:ass surj}
	The surjection $\eta_A:(\T V)^* \to (\Lie V)^*$ is a coalgebra homomorphism. 
    Thus $(\Lie V)^* \ \cong \ (\T V)^*\,/\ \ker(\eta_A)$.
\end{corollary}

\begin{remark}
 Note that $\psi\in\ker(\eta_A)$ if and only if $\langle\psi,\,\ell\rangle = 0$ for
 all $\ell \in \Lie V$.
\end{remark}

Let $\bigl\langle\ker(\,]\cdot[\,)\bigr\rangle \subset (\T V)^*$ be the smallest coideal of
$(\T V)^*$ containing \mbox{$\ker(\,]\!\cdot\![\,)\setminus V^*$}.   
The following proposition is suggested by various classical results; we will give a new, 
simple proof of it later.
(Proposition~\ref{P:ass ker =}
follows immediately from Proposition~\ref{P:prelie ker =}; Proposition~\ref{P:prelie ker =}
is a direct consequence 
of Proposition~\ref{P:graphs ker =}; Proposition~\ref{P:graphs ker =} is simple to prove.)
The corollary says that Lie coalgebras are universal anti-commutative coalgebras with
nontrivial coproducts.

\begin{proposition}\label{P:ass ker =}
	$\ker(\eta_A) = \bigl\langle\ker(\,]\cdot[\,)\bigr\rangle$. 
\end{proposition}

\begin{corollary}\label{C:ass L^*}
	$(\Lie V)^* \ \cong \ (\T V)^*\,/\;\bigl\langle\ker(\,]\cdot[\,)\bigr\rangle$ as coalgebras.
\end{corollary}

Note that $(\T V)^*$ with anti-commutative coproduct cannot be the cofree anti-commutative
coalgebra on $V^*$, because it has too many primitives.  The coideal 
$\bigl\langle\ker(\,]\cdot[\,)\bigr\rangle$ consists of all of the elements which must
be removed to eliminate the ``primitives'' other than $V^*$.

\begin{remark}
	The idea that $(\Lie V)^* \cong (\T V)^*/\bigl\langle \ker(\,]\cdot[\,)\bigr\rangle$ 
	is already present in the first 
	section of \cite{ScSt85} (and probably elsewhere in the literature as well), 
	developed using Hopf algebra structures, dual
	to classical work of Quillen \cite{Quil69}.
	The idea that $(\Lie V)^* \cong (\T V)^*/\ker(\eta_A)$ (as vector spaces) 
	is contained in the classical approach to 
	Lie algebras via Lie (or Hall) polynomials -- for example \cite[\S 4.2]{Reut93}.
\end{remark}

To use this presentation, it remains to describe $\ker(\eta_A)$ 
explicitly.  
Classically $\bigl\langle\ker(\,]\cdot[\,)\bigr\rangle$ 
is the vector subspace spanned by shuffles~\cite{MeRe89}.
The shuffle of two words is defined recursively by 
$\mathrm{Sh}(a,b) = ab + ba$ for $a,b$ single letters 
and 
$\mathrm{Sh}(\omega, a\upsilon) = \mathrm{Sh}(a\upsilon, \omega) = 
	\bigl(a\,\mathrm{Sh}(\upsilon, \omega) + \mathrm{Sh}(\upsilon, \omega)\, a\bigr)$ for 
$\upsilon,\omega$ generic words.
Recall that the (associative) Lyndon-Shirshov words in an ordered alaphabet
\cite[\S 5]{Reut93} \cite{Chib06} are the words which are lexicographically less 
than each of their cyclic permutations.
By \cite{Radf79} the Lyndon-Shirshov words are a multiplicative basis
for the shuffle monoid.  Thus if we choose an ordered basis for $V$, then 
the Lyndon-Shirshov words in that basis are a vector space basis 
for $(\T V)^*/\ker(\eta_A)$.  

\medskip

There are two ways to improve the presentation of $(\Lie V)^*$ given above. 
The first is to move away from associative algebras, since they are 
often not a convenient location for constructive proofs
(see for example \cite[Prop. 22.8]{FHT01} compared to \cite[Lemma 2.15]{SiWa09}).
The second is to find a description of $\ker(\eta)$ not involving shuffles, 
since their span is rather complicated.  For example, applying Proposition~\ref{P:ass compat}
the expressions
$(abcde - edcba)$  and 
$(abcde + bcdea + cdeab + deabc + eabcd)$ 
are in $\ker(\eta_A)$, 
though they are far from
being shuffles; neither is it immediately apparent how to write them as sums 
of shuffles (see also Example~\ref{E:reverse}).  
Furthermore, using Corollary~\ref{C:ass L^*} in 
order to make a construction on $(\Lie V)^*$ involves making a construction 
on $(\T V)^*$ and then showing it is invariant under shuffles.  
The invariance step can be daunting.

Other descriptions of the $\ker(\eta_A)$ follow from our later work
in Sections~\ref{S:prelie} and \ref{S:graphs}, and are constructed 
directly from the map $\eta_A$ using the configuration pairing 
rather than via the cobracket.

%
%

\section{The Configuration Pairing with PreLie Colgebras}\label{S:prelie}

\subsection{PreLie algebras}

\begin{definition}\label{D:prelie algebra}
	A preLie algebra \cite{Gers63}
	is $(P,\,\triangleleft)$ where $\triangleleft:P\otimes P \to P$
	satisfies
	\begin{equation*}
	 (x\triangleleft y)\triangleleft z - x\triangleleft (y\triangleleft z) = 
	  (x\triangleleft z)\triangleleft y - x\triangleleft(z\triangleleft y).
	\end{equation*}
\end{definition}

The name ``preLie'' comes from the fact that 
 $[x,y] = x\triangleleft y - y\triangleleft x$ is a Lie bracket.
Note that all associative algebras are preLie algebras using the  
preLie product \(x\triangleleft y = x y\).

Free preLie algebras have a simple combinatorial model \cite{ChLi00}.
Given a vector space $V$,
the free preLie algebra on $V$ is the vector space of 
rooted (nonplanar) trees 
with vertices decorated by elements of $V$, modulo multilinearity.
The algebra structure $r\triangleleft t$ is given on homogeneous elements 
by summing over all possible
ways to connect the root of the rooted tree $t$ to any one of the vertices of $r$. 
It is straightforward to show that this satisfies Definition~\ref{D:prelie algebra}.
Write $\R V$ for the free preLie algebra on $V$, which we view as the 
vector space of multilinear vertex-decorated, rooted trees.

\begin{example}
	Below we give the preLie operation $\triangleleft$ combining two rooted trees.
	We indicate the root of a tree by writing it as the unique lowest vertex.
	$$
			\begin{aligned}\begin{xy}
				(0,0)*[o]+=<7pt>{\scriptstyle a}*\frm{o}="1";
				(-4,3)*[o]+=<7pt>{\scriptstyle b}*\frm{o}
				**\dir{-},
				(4,3)*[o]+=<7pt>{\scriptstyle c}*\frm{o};
				"1" **\dir{-}
			\end{xy}\end{aligned}
			\ \triangleleft \ 
			\begin{aligned}\begin{xy}
				(0,-2)*[o]+=<7pt>{\scriptstyle d}*\frm{o};
				(0,2)*[o]+=<7pt>{\scriptstyle e}*\frm{o}		
				**\dir{-}
			\end{xy}\end{aligned} 
			\ \ = \ \  
			\begin{aligned}\begin{xy}
				(0,0)*[o]+=<7pt>{\scriptstyle a}*\frm{o}="a";
				(-4,3)*[o]+=<7pt>{\scriptstyle b}*\frm{o};
				**\dir{-},
				(4,3)*[o]+=<7pt>{\scriptstyle c}*\frm{o};
				"a" **\dir{-},
				(0,10)*[o]+=<7pt>{\scriptstyle d}*\frm{o};
				(0,14)*[o]+=<7pt>{\scriptstyle e}*\frm{o};
				**\dir{-},
				(0,2)*[o]+\frm<16pt,12pt>{.e},
				(0,12)*[o]+\frm<12pt,11pt>{.e},
				(0,6.2);(0,8)**\dir{-},
			\end{xy}\end{aligned}
			\ \ = \ \  
			\begin{aligned}\begin{xy}
				(0,0)*[o]+=<7pt>{\scriptstyle a}*\frm{o}="1";
				(-4,3)*[o]+=<7pt>{\scriptstyle b}*\frm{o}="2";
				**\dir{-},
				(4,3)*[o]+=<7pt>{\scriptstyle c}*\frm{o};
				"1" **\dir{-},
				(0,6)*[o]+=<7pt>{\scriptstyle d}*\frm{o}="3";
				"2" **\dir{-},
				(0,10)*[o]+=<7pt>{\scriptstyle e}*\frm{o};
				"3" **\dir{-}
			\end{xy}\end{aligned} \ + \ 
			\begin{aligned}\begin{xy}
				(0,0)*[o]+=<7pt>{\scriptstyle a}*\frm{o}="1";
				(-4,3)*[o]+=<7pt>{\scriptstyle b}*\frm{o}="2";
				**\dir{-},
				(4,3)*[o]+=<7pt>{\scriptstyle c}*\frm{o};
				"1" **\dir{-},
				(0,6)*[o]+=<7pt>{\scriptstyle d}*\frm{o}="3";
				"1" **\dir{-},
				(0,10)*[o]+=<7pt>{\scriptstyle e}*\frm{o};
				"3" **\dir{-}
			\end{xy}\end{aligned} \ + \
			\begin{aligned}\begin{xy}
				(0,0)*[o]+=<7pt>{\scriptstyle a}*\frm{o}="1";
				(-4,3)*[o]+=<7pt>{\scriptstyle b}*\frm{o};
				**\dir{-},
				(4,3)*[o]+=<7pt>{\scriptstyle c}*\frm{o}="2";
				"1" **\dir{-},
				(0,6)*[o]+=<7pt>{\scriptstyle d}*\frm{o}="3";
				"2" **\dir{-},
				(0,10)*[o]+=<7pt>{\scriptstyle e}*\frm{o};
				"3" **\dir{-}
			\end{xy}\end{aligned} 
	$$
\end{example}

A Lie algebra $L$ has universal enveloping preLie algebra 
$U_PL = \R L/\!\!\!\sim$ where $[x,y]\sim x\triangleleft y - y\triangleleft x$.
The preLie polynomial map $p_p:L\to U_PL$ is the composition 
$L\rightarrowtail \R L\twoheadrightarrow U_PL$.  Note again that 
the map $p_p$ is an algebra map only after twisting the preLie product to be
anti-commutative: $\mu(x,y) = x\triangleleft y - y\triangleleft x$.
It follows from adjointness properties that $U_P\Lie V \cong \R V$. 
We are interested in the map $p_p:\Lie V \to U_P\Lie V \cong \R V$ and its dual.

\begin{example}\label{E:prelie polynomials}
	Below is $p_p(\ell)$ for two simple Lie bracket expressions.
	\begin{itemize}
		\item 
			$[a,b]
		\longmapsto
			\begin{aligned}\begin{xy}
				(0,-2)*[o]+=<7pt>{\scriptstyle a}*\frm{o};
				(0,2)*[o]+=<7pt>{\scriptstyle b}*\frm{o}		
				**\dir{-}
			\end{xy}\end{aligned} -
			\begin{aligned}\begin{xy}
				(0,-2)*[o]+=<7pt>{\scriptstyle b}*\frm{o};
				(0,2)*[o]+=<7pt>{\scriptstyle a}*\frm{o}		
				**\dir{-}
			\end{xy}\end{aligned}	$

			\vskip 4pt

		\item
			$[[a,b],c]
			\longmapsto \ 
			\begin{aligned}\begin{xy}
				(0,0)*[o]+=<7pt>{\scriptstyle a}*\frm{o}="a1";
				(0,4)*[o]+=<7pt>{\scriptstyle b}*\frm{o}="b1" **\dir{-}?<>(.5)="ab1",
				(6,4)*[o]+=<7pt>{\scriptstyle a}*\frm{o}="a2";
				(6,0)*[o]+=<7pt>{\scriptstyle b}*\frm{o}="b2" **\dir{-}?<>(.5)="ab2",
				"ab1";"ab2"**\dir{}?<>(.5)*{-},
				"b2";"b1"**\dir{}?<>(.5)*+\frm<15pt,12pt>{.e}="com1",
				(3,9)*[o]+=<7pt>{\scriptstyle c}*\frm{o}="c";
				"com1";"c"**\dir{}?<>(.7);"c" **\dir{-}
			\end{xy}\end{aligned}
				\  - \ 
			\begin{aligned}\begin{xy}
				(0,5)*[o]+=<7pt>{\scriptstyle a}*\frm{o}="a1";
				(0,9)*[o]+=<7pt>{\scriptstyle b}*\frm{o}="b1" **\dir{-}?<>(.5)="ab1",
				(6,9)*[o]+=<7pt>{\scriptstyle a}*\frm{o}="a2";
				(6,5)*[o]+=<7pt>{\scriptstyle b}*\frm{o}="b2" **\dir{-}?<>(.5)="ab2",
				"ab1";"ab2"**\dir{}?<>(.5)*{-},
				"b2";"b1"**\dir{}?<>(.5)*+\frm<15pt,12pt>{.e}="com1",
				(3,0)*[o]+=<7pt>{\scriptstyle c}*\frm{o}="c";
				"com1";"c"**\dir{}?<>(.7);"c" **\dir{-}
			\end{xy}\end{aligned} \\
				{\ }\hspace{2cm} = \ \left(
			\begin{aligned}\begin{xy}
				(0,-4)*[o]+=<7pt>{\scriptstyle a}*\frm{o}="1";
				(0,0)*[o]+=<7pt>{\scriptstyle b}*\frm{o}="2" **\dir{-},
				(0,4)*[o]+=<7pt>{\scriptstyle c}*\frm{o}="3";"2" **\dir{-}
			\end{xy}\end{aligned}
				\; + \;
			\begin{aligned}\begin{xy}
				(0,0)*[o]+=<7pt>{\scriptstyle a}*\frm{o}="1";
				(-2,4)*[o]+=<7pt>{\scriptstyle b}*\frm{o}
				**\dir{-},
				(2,4)*[o]+=<7pt>{\scriptstyle c}*\frm{o};
				"1" **\dir{-}
			\end{xy}\end{aligned}
				\; - \;
			\begin{aligned}\begin{xy}
				(0,-4)*[o]+=<7pt>{\scriptstyle b}*\frm{o}="1";
				(0,0)*[o]+=<7pt>{\scriptstyle a}*\frm{o}="2" **\dir{-},
				(0,4)*[o]+=<7pt>{\scriptstyle c}*\frm{o}="3";"2" **\dir{-}
			\end{xy}\end{aligned}
				\; - \;
			\begin{aligned}\begin{xy}
				(0,0)*[o]+=<7pt>{\scriptstyle b}*\frm{o}="1";
				(-2,4)*[o]+=<7pt>{\scriptstyle a}*\frm{o}
				**\dir{-},
				(2,4)*[o]+=<7pt>{\scriptstyle c}*\frm{o};
				"1" **\dir{-}
			\end{xy}\end{aligned}
			\right) - \left(
			\begin{aligned}\begin{xy}
				(0,-4)*[o]+=<7pt>{\scriptstyle c}*\frm{o}="1";
				(0,0)*[o]+=<7pt>{\scriptstyle a}*\frm{o}="2" **\dir{-},
				(0,4)*[o]+=<7pt>{\scriptstyle b}*\frm{o}="3";"2" **\dir{-}
			\end{xy}\end{aligned}
				\; - \;
			\begin{aligned}\begin{xy}
				(0,-4)*[o]+=<7pt>{\scriptstyle c}*\frm{o}="1";
				(0,0)*[o]+=<7pt>{\scriptstyle b}*\frm{o}="2" **\dir{-},
				(0,4)*[o]+=<7pt>{\scriptstyle a}*\frm{o}="3";"2" **\dir{-}
			\end{xy}\end{aligned}
			\right)
				$
	\end{itemize}
\end{example}

%
%

\subsection{PreLie configuration pairing}

Write $(\R V)^*$ for the vector space dual of $\R V$.  The remark about infinite dimensional
coalgebra structures in the previous section still applies.

\begin{notation}
    We write $\phi\in (\R V)^*$ for a generic functional on $\R V$. We use $r$ for 
    homogeneous elements of $\R V$ 
    (a vertex-decorated rooted tree)
    and $r^*\in (\R V)^*$ for the dual functional.  Homogeneous 
    elements of $(\R V)^*$ are of the form $r^*$ for some homogeneous 
    $r\in \R V$.  Given $v\in \mathrm{Vertices}(r)$ a decorated vertex of a tree 
    $r\in \R V$,
    we abuse notation and write $v$ also for the element of the vector space $V$ decorating 
    the vertex (and $v^*$ for the dual functional of the decorating vector space element).
\end{notation}

\begin{definition}\label{D:prelie conf}
	Define the vector space pairing $\langle-,-\rangle:(\R V)^*\otimes \Lie V \to k$ to be 
	$\langle \phi,\,\ell\rangle = \phi\bigl(p_p(\ell)\bigr)$. 

	Let $\eta_p:(\R V)^* \to (\Lie V)^*$ be the associated map $\phi \mapsto \langle \phi,\, - \rangle$. 
\end{definition}

\begin{remark}
	The map $\eta_p$ is the dual of $p_p$ as a map of vector spaces.

	Given a chosen basis $B$ of $V$, the preLie polynomial of a Lie bracket 
    has the form
    $p_p(\ell) = \sum_i c_i\,r_i$
	where the $r_i$ are rooted trees with vertices decorated from $B$.  
    In this case $\langle r_i^*,\,\ell\rangle = c_i$, the coefficient of $r_i$ in 
    the preLie polynomial of $\ell$, and $\eta_p(r_i^*)\in (\Lie V)^*$ is the functional which 
    reads off the $r_i$ coefficient of preLie polynomials.
\end{remark}

Duality induces a coalgebra structure on $(\R V)^*$; taking trees to  
sums, cutting each edge of a tree in turn to
divide it into two trees and writing (root tree)$\otimes$(branch tree).  
As before, we define the cobracket to be the anti-commutative
twist\ 
$$\displaystyle
	]r^*[\ = \sum_{e\in \mathrm{E}(r)} 
	(r^{\hat e}_1)^* \otimes (r^{\hat e}_2)^* - 
	(r^{\hat e}_2)^* \otimes (r^{\hat e}_1)^*,$$
where $\sum_e$ is a sum over all edges of $r$ and $r^{\hat e}_1,r^{\hat e}_2$ are the 
rooted trees obtained by removing edge $e$, 
indexed so that $r^{\hat e}_1$ is the subtree containing the root of $r$; 
the root of $r^{\hat e}_2$ is the vertex formerly incident to $e$.

The following is proven as in Section~\ref{S:ass}.  

\begin{proposition}\label{P:prelie compat}
	If $\phi\in (\R V)^*$ and $\ell_1, \ell_2 \in \Lie V$ then
	$$\bigl\langle \phi,\, [\ell_1, \ell_2]\bigr\rangle \ = \ 
	\sum_{i} \bigl\langle \alpha_i,\, \ell_1\bigr\rangle\,
	 \bigl\langle \beta_i,\,\ell_2\bigr\rangle,$$
	where $]\phi[\ = \sum_i \alpha_i\otimes \beta_i$.
\end{proposition}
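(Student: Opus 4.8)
The plan is to mimic the proof of Proposition~\ref{P:ass compat} exactly, replacing the associative polynomial $p_A$ by the preLie polynomial $p_p$ and the word-cutting coproduct by the edge-cutting coproduct. The single algebraic fact that drives everything is that the preLie polynomial map is a Lie algebra map: $p_p([\ell_1,\ell_2]) = p_p(\ell_1)\triangleleft p_p(\ell_2) - p_p(\ell_2)\triangleleft p_p(\ell_1)$ for homogeneous Lie bracket expressions $\ell_1,\ell_2$, since $p_p$ is the composition $\Lie V \hookrightarrow \R V \twoheadrightarrow U_P\Lie V \cong \R V$ and the bracket on $U_P\Lie V$ is $x\triangleleft y - y\triangleleft x$ by construction. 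So first I would record this identity.

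Next I would unwind the definitions. By Definition~\ref{D:prelie conf}, $\langle\phi,[x,y]\rangle = \phi(p_p([x,y])) = \phi(p_p(x)\triangleleft p_p(y)) - \phi(p_p(y)\triangleleft p_p(x))$. The heart of the matter is to see that for $r^*$ a basis dual element and $s,t$ rooted trees, $r^*(s\triangleleft t)$ is exactly the number of edges $e$ of $r$ whose removal yields $r^{\hat e}_1 = s$ (as a rooted tree containing the root) and $r^{\hat e}_2 = t$ — because $s\triangleleft t$ is by definition the sum over all vertices $v$ of $s$ of the tree obtained by grafting the root of $t$ onto $v$, and each such grafted tree $r$ comes with a distinguished new edge, and conversely cutting that edge recovers $(s,t)$. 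Phrasing this dually: $\sum_i \alpha_i \otimes \beta_i = \Delta r^* = \sum_{e\in E(r)} (r^{\hat e}_1)^* \otimes (r^{\hat e}_2)^*$ is precisely the coproduct built to satisfy $\phi(s\triangleleft t) = \sum_i \alpha_i(s)\,\beta_i(t)$, i.e. the edge-cutting coproduct on $(\R V)^*$ is dual to $\triangleleft$. (This is the content of the sentence in the excerpt describing the induced coalgebra structure, so I can cite it rather than reprove it.) Combining, $\phi(p_p(x)\triangleleft p_p(y)) = \sum_i \alpha_i(p_p(x))\,\beta_i(p_p(y)) = \sum_i \langle\alpha_i,x\rangle\,\langle\beta_i,y\rangle$ where $\Delta\phi = \sum_i \alpha_i\otimes\beta_i$, and similarly $\phi(p_p(y)\triangleleft p_p(x)) = \sum_i \langle\beta_i,x\rangle\,\langle\alpha_i,y\rangle$ reading off $\tau\Delta\phi$. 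Subtracting gives $\langle\phi,[x,y]\rangle = \sum_i \langle\alpha_i,x\rangle\langle\beta_i,y\rangle$ where $]\phi[\ = \Delta\phi - \tau\Delta\phi = \sum_i \alpha_i\otimes\beta_i$, which is the claim.

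The main obstacle — really the only non-bookkeeping point — is establishing cleanly that the edge-cutting coproduct on $(\R V)^*$ is the linear dual of the preLie product $\triangleleft$ on $\R V$, i.e. $\langle\Delta\phi,\, s\otimes t\rangle = \langle\phi,\, s\triangleleft t\rangle$. The subtlety is that $\triangleleft$ grafts the root of the \emph{second} tree onto an arbitrary vertex of the \emph{first}, so one must check the root/branch bookkeeping matches the convention that $r^{\hat e}_1$ retains the root of $r$ and $r^{\hat e}_2$ is rooted at the vertex formerly on $e$; there is no overcounting because distinct vertices of $s$ produce grafted trees distinguished by their distinguished edge, so the map (tree $s$, vertex $v$, tree $t$) $\mapsto$ (grafted tree $r$, new edge $e$) is a bijection onto pairs $(r,e)$ with $r^{\hat e}_1 = s$, $r^{\hat e}_2 = t$. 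Once this is in hand, the proof is a two-line computation identical in form to Proposition~\ref{P:ass compat}, and I would present it as such.

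\begin{proof}
	By construction of $U_P\Lie V \cong \R V$, the map $p_p$ satisfies
	$p_p([\ell_1,\ell_2]) = p_p(\ell_1)\triangleleft p_p(\ell_2) - p_p(\ell_2)\triangleleft p_p(\ell_1)$
	for homogeneous Lie bracket expressions $\ell_1,\ell_2$.
	The edge-cutting coproduct $\Delta$ on $(\R V)^*$ is dual to the preLie product:
	for rooted trees $s,t$ and $r^* \in (\R V)^*$ we have
	$\langle \Delta r^*,\, s\otimes t\rangle = \langle r^*,\, s\triangleleft t\rangle$,
	since $s\triangleleft t$ is the sum over vertices $v$ of $s$ of the tree obtained
	by grafting the root of $t$ onto $v$, and the assignment sending a vertex $v$ of $s$
	to that grafted tree together with its new edge is a bijection onto pairs $(r,e)$ with
	$r^{\hat e}_1 = s$ and $r^{\hat e}_2 = t$.
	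Hence, writing $\Delta\phi = \sum_i \alpha_i\otimes\beta_i$,
	$\phi(p_p(x)\triangleleft p_p(y)) = \sum_i \langle\alpha_i,x\rangle\,\langle\beta_i,y\rangle$
	and $\phi(p_p(y)\triangleleft p_p(x)) = \sum_i \langle\beta_i,x\rangle\,\langle\alpha_i,y\rangle$.
	Subtracting,
	\[
		\langle\phi,[x,y]\rangle = \phi\bigl(p_p([x,y])\bigr)
		= \sum_i \langle\alpha_i,x\rangle\,\langle\beta_i,y\rangle,
	\]
	where $]\phi[\ = \Delta\phi - \tau\Delta\phi = \sum_i \alpha_i\otimes\beta_i$.
\end{proof}
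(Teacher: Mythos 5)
Your proof is correct and is essentially the paper's own argument: the paper simply says Proposition~\ref{P:prelie compat} is ``proven as in Section~\ref{S:ass}'', i.e.\ it rests on $p_p([\ell_1,\ell_2]) = p_p(\ell_1)\triangleleft p_p(\ell_2) - p_p(\ell_2)\triangleleft p_p(\ell_1)$ together with the edge-cutting coproduct being dual to $\triangleleft$, exactly as you spell out (your grafting/edge-cutting bijection is the right justification of that duality). The only cosmetic point is the reuse of $\alpha_i,\beta_i$ for both $\Delta\phi$ and $]\phi[$ in your last line, which is a harmless relabeling.
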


There is an alternate, combinatorial definition 
of $\langle-,-\rangle$ 
    coming from the following two observations.
	Let $r^*\in(\R V)^*$ and $\ell\in \Lie V$ be homogeneous elements written in 
	terms of a chosen basis $B$ of $V$.
	Applying Proposition~\ref{P:prelie compat},  
	if $\langle r^*,\, \ell\rangle \neq 0$ then the operation 
    of splitting apart the outer bracket $\ell = [\ell_1,\, \ell_2]$ 
    corresponds splitting $r^*$ into two trees by removing an edge.
	Furthermore, iterating Proposition~\ref{P:prelie compat} reduces
	$\langle r^*,\, \ell\rangle$ to a sum of products of $\langle b_i^*,\,b_j\rangle$
	where the $b_i^*$ are vertex decorations of $r^*$ and the $b_j$ are irreducible elements 
    bracketed together by $\ell$. 
	Thus, for $\langle r^*,\, \ell\rangle \neq 0$ there must be a label-preserving 
	bijection between the vertices of 
	$r$ and the elements bracketed together by $\ell$.   

\begin{definition}\label{D:prelie sigma conf}
 Let $r^*\in(\R V)^*$ and $\ell\in \Lie V$ be homogeneous elements. 
 A bijection 
 $\sigma:r\leftrightarrow \ell$ is 
 a bijection from the vertices of $r$ to 
 the positions in the Lie bracket expression $\ell$
 such that $v^*(\sigma(v)) \neq 0$ for all $v \in \mathrm{Vertices}(r)$.
 The weight of a bijection is $|\sigma| = \prod_v v^*(\sigma(v))$.

 Each bijection $\sigma:r\leftrightarrow\ell$ induces a map
 $\beta_\sigma: \text{Edges}(r) \to \text{Subbrackets}(\ell)$ 
	by setting 
	$\beta_\sigma\left( 
	\begin{aligned}\begin{xy}
				(0,-2)*+=<7pt>{\scriptstyle a}*\frm{o};
				(0,2)*+=<7pt>{\scriptstyle b}*\frm{o}		
				**\dir{-}
	\end{xy}\end{aligned}\right) = \text{lcb}\bigl(\sigma(a),\sigma(b)\bigr)$,
    where $\text{lcb}\bigl(\sigma(a),\sigma(b)\bigr)$ is the smallest subbracket
    expression of $\ell$ containing $\sigma(a)$ and $\sigma(b)$.
	The $\sigma$-configuration pairing of 
	$r^*$ and $\ell$ is given by the following.
	\begin{equation}\label{E:sigma conf}
		\langle r^*,\, \ell\rangle_\sigma =  |\sigma| \cdot
		\begin{cases}
			\displaystyle \prod_{e\in \mathrm{E}(r)} \text{sgn}\bigl(\beta_\sigma(e)\bigr) &
				\text{if $\beta_\sigma$ is bijective} \\
				\quad 0 & \text{otherwise}
		\end{cases}
	\end{equation}
	where $\prod_e$ is a product over all edges of $r$ and 
	$\text{sgn}\left(\beta_\sigma\left(
	\begin{aligned}\begin{xy}
					(0,-2)*+=<7pt>{\scriptstyle a}*\frm{o};
					(0,2)*+=<7pt>{\scriptstyle b}*\frm{o}		
					**\dir{-}
	\end{xy}\end{aligned}
	\right)\right) = \pm 1$ depending on whether the element $\sigma(a)$ is left or
	right of $\sigma(b)$ in the bracket expression $\ell$.\footnote{The placement of 
		$\begin{xy}(0,0)*+=<7pt>{\scriptstyle a}*\frm{o}\end{xy}$ below 
		$\begin{xy}(0,0)*+=<7pt>{\scriptstyle b}*\frm{o}\end{xy}$
		is intended to indicate that 
		$\begin{xy}(0,0)*+=<7pt>{\scriptstyle a}*\frm{o}\end{xy}$ is the vertex closer to the root.}
\end{definition}

\begin{remark}
   If everything is written in terms of a chosen basis $B$ of $V$, then 
   bijections $\sigma:r\leftrightarrow \ell$ satisfy the simpler requirement 
   $\sigma(v) = v$ for all $v\in \mathrm{Vertices}(r)$
   (vertices of $r$ are identified with positions in $\ell$ with the same basis element). 
   In this case, all bijections have weight 1.
\end{remark}

\begin{example}
   In the below examples $\sigma_i$ is the unique bijection 
   between the given rooted tree $r$ and Lie brackets $\ell$
   ($a,b,c\in V$ are linearly independent). 
\[
  \sigma_1:\begin{aligned}\begin{xy} 
	(0,0)*+=<8pt>{\scriptstyle a}*\frm{o}="1";
	(-3,6)*+=<8pt>{\scriptstyle b}*\frm{o}
	**\dir{-} ?(.4)*!R{\scriptstyle e_1},
	(3,6)*+=<8pt>{\scriptstyle c}*\frm{o};
	"1" **\dir{-} ?(.6)*!L{\,\scriptstyle e_2}
 \end{xy}\end{aligned}\  \longleftrightarrow \ 
    \bigl[c,[a,b]]
  \qquad \begin{cases} \beta_{\sigma_1}(e_1) = [a,b],  
                          & \mathrm{sgn}\bigl(\beta_{\sigma_1}(e_1)\bigr) = +1 \\
                       \beta_{\sigma_1}(e_2) = \bigl[c,[a,b]\bigr],
                          & \mathrm{sgn}\bigl(\beta_{\sigma_1}(e_2)\bigr) = -1
         \end{cases} 
\]
\[
  \sigma_2:\begin{aligned}\begin{xy} 
	(0,0)*+=<8pt>{\scriptstyle a}*\frm{o}="1";
	(-3,6)*+=<8pt>{\scriptstyle b}*\frm{o}
	**\dir{-} ?(.4)*!R{\scriptstyle e_1},
	(3,6)*+=<8pt>{\scriptstyle c}*\frm{o};
	"1" **\dir{-} ?(.6)*!L{\,\scriptstyle e_2}
 \end{xy}\end{aligned}\  \longleftrightarrow \  
     \bigl[a,[b,c]\bigr]
  \qquad \begin{cases} \beta_{\sigma_2}(e_1) = \bigl[a,[b,c]\bigr],  
                          & \mathrm{sgn}\bigl(\beta_{\sigma_2}(e_1)\bigr) = +1 \\
                       \beta_{\sigma_2}(e_2) = \bigl[a,[b,c]\bigr],
                          & \mathrm{sgn}\bigl(\beta_{\sigma_2}(e_2)\bigr) = +1
         \end{cases}
\]
  The $\sigma$-configuration pairing associated to $\sigma_1$ above is $-1$.
  The $\sigma$-configuration pairing associated to $\sigma_2$ is $0$ (because
  $\beta_{\sigma_2}$ is not bijective onto the set of subbrackets).
\end{example}
 
The $\sigma$-configuration pairing satisfies a bracket/cobracket compatibility condition 
analogous to Proposition~\ref{P:prelie compat}.
Given a bijection \(\sigma:r\leftrightarrow \ell = [\ell_1, \ell_2]\) such that
\(\langle r^*,\, \ell\rangle_\sigma \neq 0\),
let \(e\in\mathrm{E}(r)\) be the edge of \(r\)
with \(\beta_\sigma(e)=\ell\) and consider 
\(\sigma_1 = \sigma|_{r_1^{\hat{e}}}\) and \(\sigma_2 = \sigma|_{r_2^{\hat{e}}}\),
the restrictions of 
\(\sigma\) to the rooted trees \(r_1^{\hat{e}}\) and \(r_2^{\hat{e}}\)
obtained by removing edge \(e\) from \(r\).

\begin{lemma}\label{L:prelie geom conf compat}
  In the situation above 
  \[\bigl\langle r^*,\, [\ell_1,\ell_2]\bigr\rangle_\sigma = 
     \begin{cases}
       \phantom{-}
       \bigl\langle (r_1^{\hat e})^*,\,\ell_1\bigr\rangle_{\sigma_1} \cdot
        \bigl\langle (r_2^{\hat e})^*,\,\ell_2\bigr\rangle_{\sigma_2},  &
        \text{if \(\beta_\sigma(e) = +1\)} \\
       -
       \bigl\langle (r_1^{\hat e})^*,\,\ell_2\bigr\rangle_{\sigma_1} \cdot
        \bigl\langle (r_2^{\hat e})^*,\,\ell_1\bigr\rangle_{\sigma_2},  &
        \text{if \(\beta_\sigma(e) = -1\).} 
     \end{cases}\]
\end{lemma}

\begin{proof}
 It is enough to show that the restrictions \(\sigma_1\) and \(\sigma_2\)
 define bijections to the subbrackets \(\ell_1\) and \(\ell_2\) of \(\ell\).
 However, this follows from the requirement that \(\beta_\sigma \) be a bijection 
 onto the set of subbrackets of \(\ell\).  
 Since \(r_1^{\hat e}\) is connected, if \(a \in \mathrm{Vertices}(r_1^{\hat e})\) 
 is sent to a position in the 
 subbracket \(\ell_1\) while some other vertex
 \(b \in \mathrm{Vertices}(r_1^{\hat e})\)
 is sent to a position in \(\ell_2\), then there would be an edge  
 \(\epsilon\) of \(r_1^{\hat e}\)
 incident to vertices one of which is sent to a position in \(\ell_1\)
 and the other of which is sent to a position in \(\ell_2\).  In this case
 \(\beta_\sigma(\epsilon) = \ell\), so \(\beta_\sigma\) would not be injective.
\end{proof}

\begin{proposition}\label{P:prelie geom conf}
	For homogeneous $r^*\in (\R V)^*$ and $\ell\in \Lie V$,
	the configuration pairing of Definition~\ref{D:prelie conf} is equal to the following.
	\begin{equation}\label{E:prelie conf}
	\bigl\langle r^*,\, \ell\bigr\rangle \ = \!\!
		\sum_{\sigma:r \leftrightarrow \ell}
		\bigl\langle r^*,\, \ell\bigr\rangle_\sigma
	\end{equation}
	If there are no bijections $\sigma$, then 
	$\langle r^*,\,\ell\rangle = 0$.
\end{proposition}
\begin{proof}
The proposition is trivially true if $\ell$ is irreducible.    
Apply induction on the bracket length of the Lie bracket $\ell$
using Proposition~\ref{P:prelie compat} and Lemma~\ref{L:prelie geom conf compat}.
\end{proof}

%
%

\subsection{Lie coalgebras via the preLie configuration pairing.}\label{S:prelie to lie}
Once again,
let $\bigl\langle\ker(\,]\cdot[\,)\bigr\rangle\,\subset\,(\R V)^*$ be the smallest 
coideal of $(\R V)^*$ containing $\ker(\,]\cdot[\,)\setminus V^*$.  
Proposition~\ref{P:prelie compat} has the following corollary. [In the 
appendix we show that the preLie polynomial map $p_p$ is injective, which
dualizes to $\eta_p$ surjective.]

\begin{corollary}\label{C:prelie surj}
	The surjection $\eta_p:(\R V)^* \to (\Lie V)^*$ is a coalgebra homomorphism.
    Thus $(\Lie V)^* \ \cong \ (\R V)^*\,/\ \ker(\eta_P)$.
\end{corollary}

We postpone the
proof of the following until after Proposition~\ref{P:graphs ker =}. 

\begin{proposition}\label{P:prelie ker =}
	$\ker(\eta_p) = \bigl\langle\ker(\,]\cdot[\,)\bigr\rangle$.  
\end{proposition}

\begin{corollary}\label{C:prelie L^*}
	$(\Lie V)^* \ \cong \ (\R V)^*\,/\;\bigl\langle\ker(\,]\cdot[\,)\bigr\rangle$ as coalgebras.
\end{corollary}

Proposition~\ref{P:prelie ker =} implies 
Proposition~\ref{P:ass ker =} (the analogous proposition for $(\T V)^*$) 
in the following manner.

\begin{definition}\label{D:i_p}
    Let $q_p:\R V \twoheadrightarrow \T V$ be the algebra homomorphism given by 
	$q_p\bigl(a_1\triangleleft (a_2\triangleleft\cdots\triangleleft(a_{n-1}\triangleleft a_n))\bigr) = 
	 a_1a_2\cdots a_n$ and 
	$q_p(r) = 0$ for rooted trees $r$ not of this form.
	Write $i_p:(\T V)^* \rightarrowtail (\R V)^*$ for the dual of $q_p$. 
\end{definition}

Recall that the rooted tree
$a_1\triangleleft (a_2\triangleleft\cdots\triangleleft(a_{n-1}\triangleleft a_n))$
is the ``branchless tree'' with $a_1$ at the root, $a_2$ above $a_1$, $a_3$ above $a_2$, 
etc.
The map $i_p$ converts words to branchless tress.  Note that this is also a 
coalgebra homomorphism for the 
cobracket coalgebra structures of $(\T V)^*$ and $(\R V)^*$.

\begin{figure}[h]
\[ 
\begin{aligned}
\begin{xy}
	(0,0)*+<15pt,15pt>{\Lie V}="L",
	(15,10)*+{\R V}="P",
	(30,0)*+UR{\T V}="T",
	"L";"T"**\dir{-}?>*\dir{>}?<*\dir{>}?<>(.5)+<0pt,-5pt>*{p_A},
	"L";"P"**\dir{-}?>*\dir{>}?<*\dir{>}?<>(.3)+<-1pt,5pt>*{p_p},
	"P";"T"**\dir{-}?>*\dir{>>}?<>(.5)+<1pt,5pt>*{q_p},
\end{xy}
\end{aligned}
\ \longleftrightarrow \ 
\begin{aligned}
\begin{xy}
	(0,0)*+UR{(\Lie V)^*}="L",
	(15,10)*+{(\R V)^*\hspace{-10pt}}="P",
	(30,0)*+<10pt,10pt>{(\T V)^*}="T",
	"T";"L"**\dir{-}?>*\dir{>>}?<>(.5)+<0pt,-5pt>*{\eta_A},
	"P";"L"**\dir{-}?>*\dir{>>}?<>(.5)+<-1pt,5pt>*{\eta_p},
	"T";"P"**\dir{-}?>*\dir{>}?<*\dir{>}?<>(.2)+<1pt,6pt>*{i_p},
\end{xy}
\end{aligned}
\]
\end{figure}

\begin{proposition}\label{P:factor eta_A}
	$\eta_A = \eta_p \circ i_p$.  
\end{proposition}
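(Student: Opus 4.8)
The plan is to prove the identity $\eta_A = \eta_p \circ i_p$ as maps of vector spaces $(\T V)^* \to (\Lie V)^*$, and the cleanest route is to dualize it: since $\eta_A$ is the vector-space dual of $p_A \colon \Lie V \to \T V$, and $\eta_p$ is the dual of $p_p \colon \Lie V \to \R V$, and $i_p$ is by Definition~\ref{D:i_p} the dual of $q_p \colon \R V \twoheadrightarrow \T V$, the claimed equation $\eta_A = \eta_p \circ i_p$ is precisely the dual of the equation
\begin{equation*}
	p_A = q_p \circ p_p \colon \Lie V \longrightarrow \T V.
\end{equation*}
So I would reduce the proposition to checking this last identity at the algebra level, which is both more concrete and more natural.

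To verify $p_A = q_p \circ p_p$, first I would observe that all three maps $p_A$, $p_p$, and $q_p$ are compatible with the relevant algebraic structure in the precise sense needed: $p_p \colon \Lie V \to \R V$ sends a Lie bracket $[x,y]$ to $p_p(x) \triangleleft p_p(y) - p_p(y) \triangleleft p_p(x)$ (this is what ``preLie polynomial map'' means, via $U_P \Lie V \cong \R V$), and similarly $p_A([x,y]) = p_A(x) p_A(y) - p_A(y) p_A(x)$. Meanwhile $q_p$ is an algebra homomorphism from the preLie algebra $\R V$ to the associative algebra $\T V$ regarded as preLie via $u \triangleleft v = uv$ — one should double-check that the explicit formula in Definition~\ref{D:i_p} really does respect $\triangleleft$, which follows because applying $\triangleleft$ to a right-combed tree $a_1 \triangleleft(\cdots\triangleleft a_n)$ by grafting another right-combed tree $b_1\triangleleft(\cdots\triangleleft b_m)$ at a vertex produces right-combed trees only when grafted at the topmost vertex $a_n$, giving $a_1\cdots a_n b_1\cdots b_m$, exactly matching the product in $\T V$; all other graftings yield non-right-combed trees, which $q_p$ kills, and these are exactly the terms that would contribute nothing to the associative product. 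Hence $q_p \circ p_p \colon \Lie V \to \T V$ is a map taking $[x,y]$ to $q_p(p_p(x)) q_p(p_p(y)) - q_p(p_p(y)) q_p(p_p(x))$.

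Then I would argue by induction on bracket length. For $v \in V \subset \Lie V$, both $p_A(v) = v$ and $q_p(p_p(v)) = q_p(v) = v$ (the single-vertex tree, which is right-combed), so they agree on generators. For the inductive step, given $\ell = [\ell_1, \ell_2]$ with the identity known for $\ell_1, \ell_2$, compute
\begin{equation*}
	q_p\bigl(p_p([\ell_1,\ell_2])\bigr) = q_p(p_p \ell_1)\, q_p(p_p \ell_2) - q_p(p_p \ell_2)\, q_p(p_p \ell_1) = p_A(\ell_1) p_A(\ell_2) - p_A(\ell_2) p_A(\ell_1) = p_A([\ell_1,\ell_2]),
\end{equation*}
using the structure-compatibility of each map and then the inductive hypothesis. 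Since $\Lie V$ is generated as a Lie algebra by $V$, this proves $p_A = q_p \circ p_p$, and dualizing gives $\eta_A = \eta_p \circ i_p$.

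The main obstacle is the bookkeeping in the combinatorial claim that $q_p$ is genuinely a preLie-to-associative algebra homomorphism (equivalently, that the only summand of $r \triangleleft s$ surviving under $q_p$, when $r$ and $s$ are right-combed, is the one grafting $s$ onto the top vertex of $r$). This is elementary but is the one spot where something could be mis-stated; everything else is formal manipulation with the adjunction isomorphisms $U_A \Lie V \cong \T V$ and $U_P \Lie V \cong \R V$ and the functoriality of vector-space duals. One should also note that the duality reduction is clean precisely because, as emphasized in the remarks on finiteness, the pairings never involve infinite sums, so ``dual of a composite is composite of duals'' holds without subtlety.
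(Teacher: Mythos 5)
Your proof is correct, but it runs in the opposite direction from the paper's. The paper proves the identity entirely on the coalgebra side: it observes that $i_p$ is the identity on $V^*$ and then does strong induction on word length, comparing $\langle\psi,[\ell_1,\ell_2]\rangle$ with $\langle i_p\psi,[\ell_1,\ell_2]\rangle$ via Propositions~\ref{P:ass compat} and \ref{P:prelie compat} together with the fact, stated just before the proposition, that $i_p$ is a map of cobracket coalgebras; the algebra-level identity $p_A=q_p\circ p_p$ appears there only as a remark after the proof. You instead prove that primal identity directly by induction on bracket length --- using that $p_A$ and $p_p$ send brackets to commutators (for the associative and $\triangleleft$ products respectively) and that $q_p$ is a homomorphism to $\T V$ viewed as a preLie algebra --- and then dualize, which is legitimate since $\eta_A=p_A^*$, $\eta_p=p_p^*$, $i_p=q_p^*$, and duals of composites compose (no finiteness subtleties arise for plain vector-space duals; those concerns only affect coalgebra structures, which your argument never uses). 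What your route buys: it is concrete, avoids the coalgebra machinery altogether, and it actually supplies the verification that $q_p$ respects products, which Definition~\ref{D:i_p} asserts without proof. What the paper's route buys: it stays entirely inside the pairing formalism already developed, needing nothing about $q_p$ beyond the formula for $i_p$ on homogeneous elements. One small point to tidy in your homomorphism check: you only treat the case where both trees are right-combed; you should also note that if either factor is not a linear chain, then every grafting occurring in $r\triangleleft s$ contains that factor as a subtree with its branching intact, hence is not a linear chain and is killed by $q_p$, matching $q_p(r)\,q_p(s)=0$. This is immediate, but it is needed for $q_p(r\triangleleft s)=q_p(r)\,q_p(s)$ in full generality, which your induction step uses when applying it to the polynomials $p_p(\ell_1)$ and $p_p(\ell_2)$ rather than to single chains.
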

\begin{proof}
	The proposition is trivially true on elements of $V^*\subset (\T V)^*$, where $i_p$ is merely
	the identity map.
	Using strong induction on word length, applying Propositions~\ref{P:ass compat} and 
	\ref{P:prelie compat}, we get the result for all homogeneous elements of $(\T V)^*$.
	This implies the proposition for all of $(\T V)^*$.
\end{proof}

\begin{remark}
	The previous proposition is the dual of the statement
	$p_A = q_p \circ p_p$.
\end{remark}

\begin{corollary}\label{C:factor eta_A}
	Let $\psi \in (\T V)^*$. Then $\psi\in \ker(\eta_A)$ if and only if $i_p(\psi) \in \ker(\eta_p)$.
\end{corollary}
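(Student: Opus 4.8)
The plan is to read off Corollary~\ref{C:factor eta_A} directly from Proposition~\ref{P:factor eta_A} together with the fact, already noted in the excerpt, that $i_p$ is an injection. First I would record the identity $\eta_A = \eta_p \circ i_p$ from Proposition~\ref{P:factor eta_A}. For the ``only if'' direction, suppose $\psi \in \ker(\eta_A)$, i.e. $\eta_A(\psi) = 0$; since $\eta_A(\psi) = \eta_p\bigl(i_p(\psi)\bigr)$, this says exactly $i_p(\psi) \in \ker(\eta_p)$, with no further work needed. For the ``if'' direction, suppose $i_p(\psi) \in \ker(\eta_p)$; then $\eta_A(\psi) = \eta_p\bigl(i_p(\psi)\bigr) = 0$, so $\psi \in \ker(\eta_A)$. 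So in fact both directions are immediate from the factorization, and injectivity of $i_p$ is not even strictly needed for this particular corollary (it would be needed if one wanted to identify $\ker(\eta_A)$ with $i_p^{-1}\bigl(\ker(\eta_p)\bigr)$ as a genuine preimage under an embedding, but the biconditional as stated is weaker).

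The only subtlety I would flag is to make sure the statement is about the same $\psi$ on both sides, i.e. that we are comparing $\psi \in (\T V)^*$ with its image $i_p(\psi) \in (\R V)^*$; Proposition~\ref{P:factor eta_A} is exactly the bridge that makes this comparison meaningful, and $i_p$ being a coalgebra homomorphism (as remarked right before Proposition~\ref{P:factor eta_A}) ensures the kernels in question are coideals so the statement is sensible on the coalgebra level, though again that structure is not needed for the bare logical equivalence.

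There is really no main obstacle here: the corollary is a one-line consequence of the already-established factorization $\eta_A = \eta_p \circ i_p$. If anything, the ``hard part'' was getting Proposition~\ref{P:factor eta_A} itself, which was handled by strong induction on word length using Propositions~\ref{P:ass compat} and \ref{P:prelie compat}; given that, Corollary~\ref{C:factor eta_A} is pure formal manipulation. I would write the proof as: ``By Proposition~\ref{P:factor eta_A}, $\eta_A(\psi) = \eta_p\bigl(i_p(\psi)\bigr)$ for all $\psi \in (\T V)^*$; hence $\eta_A(\psi) = 0$ if and only if $\eta_p\bigl(i_p(\psi)\bigr) = 0$, which is the claim.''
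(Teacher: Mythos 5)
Your proof is correct and matches the paper's (implicit) argument: the corollary is stated as an immediate consequence of the factorization $\eta_A = \eta_p \circ i_p$ from Proposition~\ref{P:factor eta_A}, exactly as you argue, and you are right that injectivity of $i_p$ plays no role in the biconditional itself.
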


\begin{proof}[Proof of Proposition~\ref{P:ass ker =} assuming \ref{P:prelie ker =}]
    By Corollary~\ref{C:factor eta_A}, $\ker(\eta_A) = i_p^{-1}\bigl(\ker(\eta_p)\bigr)$.  

   Write $\,]\cdot[\,_{A}$ and 
   $\,]\cdot[\,_{p}$ for the cobrackets  
   in $(\T V)^*$ and $(\R V)^*$ respectively.  
   The proof is completed by showing that 
   $\bigl\langle\ker(\,]\cdot[\,_A)\bigr\rangle
    = i_p^{-1}\Bigl(
        \bigl\langle\ker(\,]\cdot[\,_p)\bigr\rangle\Bigr)$.
   However, this is a basic property of injections of coalgebras:
   If $i:A \rightarrowtail B$ is an injection of coalgebras and $S\subset B$ then
   $\langle i^{-1}S\rangle_A = i^{-1}\langle S \rangle_B$ (see Appendix~\ref{A:coalg}).

\end{proof}

Proposition~\ref{P:prelie geom conf} combined with Proposition~\ref{P:factor eta_A}  
gives an alternative to the recursive method of Proposition~\ref{P:ass compat} 
(used in Example~\ref{E:comp}) for the calculation of coefficients in the associative 
polynomial for Lie bracket. 
\begin{align}\begin{split}
\langle \omega^*,\, \ell\rangle \ 
  &= \ \bigl\langle i_p(\omega^*),\, \ell\bigr\rangle  \\  
  &= \ \sum_{\sigma} \bigl\langle i_p(\omega^*),\, \ell\bigr\rangle_\sigma
\end{split}\label{E:convert}\end{align}

\begin{example}\label{E:alt comp}
	Recall that a right-normed Lie bracket expression has the form 
	$\ell = [a_1, [a_2, [\dots, [a_{n-1}, a_n] ] ] ]$.
	Given a word $\omega = b_1\cdots b_n$ we apply Equation~(\ref{E:convert}) to 
    compute the coefficient of $\omega$ in the associative polynomial of $\ell$.
    Nonzero bijections $\sigma:i_p(\omega)\leftrightarrow \ell$ are given
    by permutations $\sigma\in\Sigma_n$ where $b_k = a_{\sigma(k)}$.
    The map $\beta_\sigma$ 
    sends the edge between $b_k$ and $b_{k+1}$ to the subbracket 
    $\mathrm{lcb}(a_{\sigma(k)},\,a_{\sigma(k+1)})$. 
    Since $\ell$ is right-normed, least common brackets are given by  
    $\mathrm{lcb}(a_i,a_j) = [a_{\mathrm{min}(i,j)},[....]]$
    (the right-normed subbracket beginning with $a_{\mathrm{min}(i,j)}$).
    Thus if there is any $k$ such that 
    $\sigma(k-1) > \sigma(k) < \sigma(k+1)$
    then 
    $\mathrm{lcb}(a_{\sigma(k-1)},\,a_{\sigma(k)}) = 
       \mathrm{lcb}(a_{\sigma(k)},\,a_{\sigma(k+1)})$.
    In this case $\bigl\langle i_p(\omega^*),\,\ell\bigr\rangle_\sigma = 0$.
    Otherwise, $\sigma$ must be increasing until some position $k$ and 
    then be decreasing. [Note: equivalently $\sigma^{-1}$ is 
    a shuffle of $\{1,\dots,(k-1)\}$ into $\{(k+1),\dots,n\}$.]
    In this case $\bigl\langle i_p(\omega^*),\,\ell\bigr\rangle_\sigma = (-1)^{n-k}$.
    
	In other words, reading $\omega$ left-to-right should read $\ell$ moving left-to-right
	skipping some letters and then should read the remaining, skipped letters 
    from right-to-left.  
	The sign comes from the number of times you move right-to-left as in the examples below.

	\vskip 6pt

	\begin{itemize}
		\item $\bigl\langle abcdef^*,\, [a, [f, [b, [e, [c, d] ] ] ] ] \bigr\rangle = 1$ \hfill 
			$\xymatrix@C=5pt{[a, \ar@/^12pt/[rr] & [f, & [b, \ar@/^12pt/[rr] & 
				[e, \ar@/^12pt/[ll] & [c, \ar@<-3pt>@/^15pt/[r] & d]]]]] \ar@/^12pt/[ll] }$

		\vskip 13pt

		\item $\bigl\langle abcdef^*,\, [f, [a, [e, [b, [d, c] ] ] ] ] \bigr\rangle = -1$ \hfill 
			$\xymatrix@C=5pt{[f, & [a, \ar@/^12pt/[rr] & [e, \ar@/^12pt/[ll] & 
				[b, \ar@/^12pt/[rr] & [d, \ar@/^12pt/[ll] & c]]]]] \ar@<-3pt>@/^15pt/[l] }$

		\vskip 13pt

		\item $\bigl\langle abcdef^*,\, [f, [e, [a, [c, [b, d] ] ] ] ] \bigr\rangle = 0$ \hfill 
			$\xymatrix@C=5pt{[f, & [e, \ar@<-3pt>@/^15pt/[l] & [a, \ar@/^12pt/[rr] & 
			[c, \ar@/^12pt/[rr] & [b, \ar@<-3pt>@/^15pt/[l] & d]]]]] \ar@/^14pt/[llll] }$

		\vskip 13pt
		
		\item $\bigl\langle abbab^*,\, [a, [b, [b, [b, a] ] ] ]\bigr\rangle = -3$  \hfill
			$\xymatrix@C=5pt{[a, \ar@<-3pt>@/^15pt/[r] & [b, \ar@<-3pt>@/^15pt/[r] & 
				[b, \ar@/^12pt/[rr] & [b, & a]]]] \ar@<-3pt>@/^15pt/[l] }$ 
                
                \vspace{8pt}
                
			\hfill $\xymatrix@C=5pt{[a, \ar@<-3pt>@/^15pt/[r] & [b, \ar@/^12pt/[rr] & 
				[b, & [b, \ar@<-3pt>@/^15pt/[r] & a]]]] \ar@/^12pt/[ll] }$
                
                \vspace{8pt}
                
			\hfill $\xymatrix@C=5pt{[a, \ar@/^12pt/[rr] & [b, & 
				[b, \ar@<-3pt>@/^15pt/[r] & [b, \ar@<-3pt>@/^15pt/[r] & a]]]] \ar@/^12pt/[lll] }$
                
                \vspace{8pt}

	\end{itemize}
    The paranoid reader is invited to verify that these do indeed calculate coefficients
    of the given words in the Lie polynomials for their respective Lie brackets.

	The reverse construction holds for left-normed bracket expressions.  
\end{example}

Now we describe $\ker(\eta_p)$.
Define the weight of a rooted tree to be its number of vertices.
Considering the dual of Example~\ref{E:prelie polynomials}
we can read off $\ker(\eta_p)$ in low weights.
\begin{itemize}
	\item In weight 2, $\ker(\eta_p)$ is given by replacing numbers by basis elements in the 
		following expression.
	\begin{equation}\label{E:prelie ker2}
		\begin{aligned}\begin{xy}
			(0,-2)*+=<7pt>{\scriptstyle 1}*\frm{o};
			(0,2)*+=<7pt>{\scriptstyle 2}*\frm{o}		
			**\dir{-}
		\end{xy}\end{aligned} +
		\begin{aligned}\begin{xy}
			(0,-2)*+=<7pt>{\scriptstyle 2}*\frm{o};
			(0,2)*+=<7pt>{\scriptstyle 1}*\frm{o}		
			**\dir{-}
		\end{xy}\end{aligned}	
	\end{equation}

		\vskip 4pt

	\item In weight 3, $\ker(\eta_p)$ is spanned similarly by the following.
	\begin{equation}\label{E:prelie ker3}
        \begin{aligned}\begin{xy}
			(0,-4)*[o]+=<7pt>{\scriptstyle 1}*\frm{o}="1";
			(0,0)*[o]+=<7pt>{\scriptstyle 2}*\frm{o}="2" **\dir{-},
			(0,4)*[o]+=<7pt>{\scriptstyle 3}*\frm{o}="3";"2" **\dir{-}
		\end{xy}\end{aligned}
			\, + \,
		\begin{aligned}\begin{xy}
			(0,-4)*[o]+=<7pt>{\scriptstyle 2}*\frm{o}="1";
			(0,0)*[o]+=<7pt>{\scriptstyle 3}*\frm{o}="2" **\dir{-},
			(0,4)*[o]+=<7pt>{\scriptstyle 1}*\frm{o}="3";"2" **\dir{-}
		\end{xy}\end{aligned}
			\, + \,
		\begin{aligned}\begin{xy}
			(0,-4)*[o]+=<7pt>{\scriptstyle 3}*\frm{o}="1";
			(0,0)*[o]+=<7pt>{\scriptstyle 1}*\frm{o}="2" **\dir{-},
			(0,4)*[o]+=<7pt>{\scriptstyle 2}*\frm{o}="3";"2" **\dir{-}
		\end{xy}\end{aligned}
	\qquad \text{and} \qquad
		\begin{aligned}\begin{xy}
			(0,-4)*[o]+=<7pt>{\scriptstyle 1}*\frm{o}="1";
			(0,0)*[o]+=<7pt>{\scriptstyle 2}*\frm{o}="2" **\dir{-},
			(0,4)*[o]+=<7pt>{\scriptstyle 3}*\frm{o}="3";"2" **\dir{-}
		\end{xy}\end{aligned}
			\, + \, 
		\begin{aligned}\begin{xy}
			(0,-2)*[o]+=<7pt>{\scriptstyle 2}*\frm{o}="1";
			(2,2)*[o]+=<7pt>{\scriptstyle 3}*\frm{o}="2" **\dir{-},
			(-2,2)*[o]+=<7pt>{\scriptstyle 1}*\frm{o}="4";"1" **\dir{-}
		\end{xy}\end{aligned}	
        \end{equation}
\end{itemize}
\begin{example}
Other weight 3 expressions in $\ker(\eta_p)$ come from combining these. 
For example,
	\begin{equation}\label{E:prelie ker3+}
		\begin{aligned}\begin{xy}
			(0,-4)*[o]+=<7pt>{\scriptstyle 1}*\frm{o}="1";
			(0,0)*[o]+=<7pt>{\scriptstyle 2}*\frm{o}="2" **\dir{-},
			(0,4)*[o]+=<7pt>{\scriptstyle 3}*\frm{o}="3";"2" **\dir{-}
		\end{xy}\end{aligned}
			\, - \,
		\begin{aligned}\begin{xy}
			(0,-4)*[o]+=<7pt>{\scriptstyle 3}*\frm{o}="1";
			(0,0)*[o]+=<7pt>{\scriptstyle 2}*\frm{o}="2" **\dir{-},
			(0,4)*[o]+=<7pt>{\scriptstyle 1}*\frm{o}="3";"2" **\dir{-}
		\end{xy}\end{aligned},
	\qquad
		\begin{aligned}\begin{xy}
			(0,-4)*[o]+=<7pt>{\scriptstyle 1}*\frm{o}="1";
			(0,0)*[o]+=<7pt>{\scriptstyle 2}*\frm{o}="2" **\dir{-},
			(0,4)*[o]+=<7pt>{\scriptstyle 3}*\frm{o}="3";"2" **\dir{-}
		\end{xy}\end{aligned}
			 + 
		\begin{aligned}\begin{xy}
			(0,-4)*[o]+=<7pt>{\scriptstyle 1}*\frm{o}="1";
			(0,0)*[o]+=<7pt>{\scriptstyle 3}*\frm{o}="2" **\dir{-},
			(0,4)*[o]+=<7pt>{\scriptstyle 2}*\frm{o}="3";"2" **\dir{-}
		\end{xy}\end{aligned}
			 - 
		\begin{aligned}\begin{xy}
			(0,0)*[o]+=<7pt>{\scriptstyle 1}*\frm{o}="1";
			(-2,4)*[o]+=<7pt>{\scriptstyle 2}*\frm{o}="2";
			(2,4)*[o]+=<7pt>{\scriptstyle 3}*\frm{o}="3";
			"1";"2"**\dir{-},
			"1":"3" **\dir{-}
		\end{xy}\end{aligned},
	\qquad \text{and} \qquad
		\begin{aligned}\begin{xy}
			(0,0)*[o]+=<7pt>{\scriptstyle 1}*\frm{o}="1",
			(-2,4)*[o]+=<7pt>{\scriptstyle 2}*\frm{o}="2",
			(2,4)*[o]+=<7pt>{\scriptstyle 3}*\frm{o}="3",
			"1";"2"**\dir{-},
			"1";"3" **\dir{-}
		\end{xy}\end{aligned}
			 + 
		\begin{aligned}\begin{xy}
			(0,0)*[o]+=<7pt>{\scriptstyle 2}*\frm{o}="1",
			(-2,4)*[o]+=<7pt>{\scriptstyle 1}*\frm{o}="2",
			(2,4)*[o]+=<7pt>{\scriptstyle 3}*\frm{o}="3",
			"1";"2"**\dir{-},
			"1";"3"**\dir{-}
		\end{xy}\end{aligned}
			 + 
		\begin{aligned}\begin{xy}
			(0,0)*[o]+=<7pt>{\scriptstyle 3}*\frm{o}="1",
			(-2,4)*[o]+=<7pt>{\scriptstyle 1}*\frm{o}="2",
			(2,4)*[o]+=<7pt>{\scriptstyle 2}*\frm{o}="3",
			"1";"2"**\dir{-},
			"1";"3"**\dir{-}
		\end{xy}\end{aligned}.
	\end{equation}
\end{example}

Expression (\ref{E:prelie ker2}) is an 
anti-symmetry identity.  The first expression of (\ref{E:prelie ker3})
is the Arnold identity.  The second expression of (\ref{E:prelie ker3}) is 
a change of root identity.  
The expressions in (\ref{E:prelie ker3+}) are weight 3 anti-symmetry and 
Arnold with different roots.
Note that the change of root identity in (\ref{E:prelie ker3}) can be obtained
from weight 2 anti-symmetry (\ref{E:prelie ker2}) 
by adding a new vertex
$\begin{xy}(0,0)*[o]+=<7pt>{\scriptstyle 3}*\frm{o}\end{xy}$ above each occurrence of 
$\begin{xy}(0,0)*[o]+=<7pt>{\scriptstyle 2}*\frm{o}\end{xy}$.
This observation is true in general: $\ker(\eta_p)$ is local in the sense that grafting 
rooted trees onto expressions in the kernel yields new kernel expressions.

\begin{definition}
	Given rooted trees $r$, $t$ and a chosen vertex $v$ of $r$, the grafting 
	of $t$ onto $r$ at $v$, written 
	$(r\, {}_v\!\triangleleft t)$, is the rooted tree given by adding an edge 
	from the vertex $v$ of $r$ to the root vertex of $t$.
\end{definition}

\begin{remark} 
  Grafting is like a ``partial preLie product'' operation.  Recall that the 
  preLie product $r\triangleleft t$ is given by summing over all possible ways to 
  connect the root of $t$ to a vertex of $r$.  So
  $r\triangleleft t = \sum_{v\in \mathrm{V}(r)} r\, {}_v\!\triangleleft t$.
\end{remark}

\begin{proposition}\label{P:prelie graft}
	Let $r_1,...,r_n,t\in \R V$ be homogeneous elements 
    (labeled rooted trees) with vertex decorations $\mathrm{Labels}(r_i) = R$
    and $\mathrm{Labels}(t) = T$ where $R\cup T$ is a linearly
    independent subset of $V$.
	If $r_1^* + \cdots + r_n^*\in \ker(\eta_p)$, 
    then the following grafting operations give new kernel elements.
\begin{itemize}
 \item
	If $v_i\in \text{Vertices}(r_i)$ all with the same label 
    then
	$(r_1\, {}_{v_1}\!\!\!\triangleleft t)^* + \cdots + 
	 (r_n\, {}_{v_n}\!\!\!\!\triangleleft t)^* \in \ker(\eta_p).$

 \item
	If the roots of $r_i$ all have the same label, 
    then 
	$(t\, {}_{v}\!\triangleleft r_1)^* + \cdots + 
	 (t\, {}_{v}\!\triangleleft r_n)^* \in \ker(\eta_p)$ 
    for any vertex $v$ of $t$.
 \end{itemize}
\end{proposition}
  We will give a proof of the first statement; the second is similar.
  Recall that $\sum_i r_i^* \in \ker(\eta_p)$ is equivalent to  
  $\langle \sum_i r_i^*,\, \ell\rangle = 0$ for all $\ell \in \Lie V$.
\begin{proof}
  By Proposition~\ref{P:graph geom conf}, we may compute component pairings 
  $\bigl\langle (r_i\, {}_{v_i}\!\!\!\triangleleft t)^*,\, \ell\bigr\rangle$ using 
  $\sigma$-configurations.  However, since $R\cup T$ is linearly independent,
  there is at most one nonzero bijection
  $\sigma_i:(r_i\, {}_{v_i}\!\!\!\triangleleft t) \leftrightarrow \ell$.
  Let $\sigma_i$ be this bijection (if it exists) and let $e_i$ be the edge of 
  $(r_i\, {}_{v_i}\!\!\!\triangleleft t)$ connecting
  $v_i$ and the root of $t$.  
  The $\sigma_i$-configuration pairing splits into components from the trees
  $r_i$ and $t$, and the edge $e_i$ between them.
  $$\bigl\langle (r_i\, {}_{v_i}\!\!\!\triangleleft t)^*,\, \ell\bigr\rangle_{\sigma_i} 
   = \Bigl(\prod_{\epsilon\in\mathrm{E}(r_i)} \beta_{\sigma_i}(\epsilon) \Bigr)
     \Bigl(\prod_{\epsilon\in\mathrm{E}(t)} \beta_{\sigma_i}(\epsilon) \Bigr) 
     \Bigl( \beta_{\sigma_i}(e_i) \Bigr) $$
  [For simplicity, assume everything is written in terms of a basis of $V$ 
  so that all bijections have weight 1.]
  Since $T$ is linearly independent, all nontrivial bijections $\sigma_i$ must 
  act identically on the tree $t$.  Similarly, 
  since all $v_i$ have the same label, 
  the $\beta_{\sigma_i}(e_i)$ are all equal.  Thus we may factor as follows.
  $$\sum_i \bigl\langle  (r_i\, {}_{v_i}\!\!\!\triangleleft t)^*, \ell\bigr\rangle_{\sigma_i}
   = \Bigl(\sum_i \prod_{\epsilon\in\mathrm{E}(r_i)} \beta_{\sigma_i}(\epsilon) \Bigr)
     \Bigl(\prod_{\epsilon\in\mathrm{E}(t)} \beta_{\sigma_1}(\epsilon) \Bigr) 
     \Bigl( \beta_{\sigma_1}(e_1) \Bigr) $$
  The proof is completed by showing that 
   $\prod_{\epsilon\in\mathrm{E}(r_i)} \beta_{\sigma_i}(\epsilon)  = 
    \langle r_i^*,\,\ell'\rangle_{\sigma'}$ for some Lie bracket $\ell'$ and
    bijection $\sigma_i':r_i\leftrightarrow \ell'$.  This implies that the 
  sum on the right hand side above is 0.
    
  The Lie bracket $\ell'$ is given by restriction of $\ell$ to $S$ as follows.  
  Consider $\ell$ as a leaf-labeled binary rooted tree.  Remove all branches
  whose leaves are not labeled from $S$.  The resulting leaf-labeled binary rooted
  tree is $\ell'$.  By construction the bijection 
  $\sigma: (r_i\, {}_{v_i}\!\!\!\triangleleft t) \leftrightarrow \ell$
  restricts to a bijection
  $\sigma':r_i \leftrightarrow \ell'$.
\end{proof}

\begin{example}
	Grafting a new vertex 
	$\begin{aligned}\begin{xy}(0,0)*[o]+=<7pt>{\scriptstyle 4}*\frm{o}\end{xy}\end{aligned}$ 
	above
	$\begin{aligned}\begin{xy}(0,0)*[o]+=<7pt>{\scriptstyle 3}*\frm{o}\end{xy}\end{aligned}$ 
	in the $\ker(\eta_p)$ elements of (\ref{E:prelie ker3}) and (\ref{E:prelie ker3+}) yields the following 
	$\ker(\eta_p)$ elements of weight 4. 
	\begin{equation}\label{E:prelie ker4.1}
        \begin{aligned}\begin{xy}
			(0,-4)*[o]+=<7pt>{\scriptstyle 1}*\frm{o}="1";
			(0,0)*[o]+=<7pt>{\scriptstyle 2}*\frm{o}="2" **\dir{-},
			(0,4)*[o]+=<7pt>{\scriptstyle 3}*\frm{o}="3";"2" **\dir{-},
			(0,8)*[o]+=<7pt>{\scriptstyle 4}*\frm{o}="4";"3" **\dir{-}
		\end{xy}\end{aligned}
			\, + \,
		\begin{aligned}\begin{xy}
			(0,-4)*[o]+=<7pt>{\scriptstyle 2}*\frm{o}="1";
			(0,0)*[o]+=<7pt>{\scriptstyle 3}*\frm{o}="2" **\dir{-},
			(2,4)*[o]+=<7pt>{\scriptstyle 1}*\frm{o}="3";"2" **\dir{-},
			(-2,4)*[o]+=<7pt>{\scriptstyle 4}*\frm{o}="4";"2" **\dir{-}
		\end{xy}\end{aligned}
			\, + \,
		\begin{aligned}\begin{xy}
			(0,-4)*[o]+=<7pt>{\scriptstyle 3}*\frm{o}="1";
			(2,0)*[o]+=<7pt>{\scriptstyle 1}*\frm{o}="2" **\dir{-},
			(2,4)*[o]+=<7pt>{\scriptstyle 2}*\frm{o}="3";"2" **\dir{-},
			(-2,0)*[o]+=<7pt>{\scriptstyle 4}*\frm{o}="4";"1" **\dir{-}
		\end{xy}\end{aligned}
		\qquad \text{and} \qquad
		\begin{aligned}\begin{xy}
			(0,-4)*[o]+=<7pt>{\scriptstyle 1}*\frm{o}="1";
			(0,0)*[o]+=<7pt>{\scriptstyle 2}*\frm{o}="2" **\dir{-},
			(0,4)*[o]+=<7pt>{\scriptstyle 3}*\frm{o}="3";"2" **\dir{-},
			(0,8)*[o]+=<7pt>{\scriptstyle 4}*\frm{o}="4";"3" **\dir{-}
		\end{xy}\end{aligned}
			\, + \, 
		\begin{aligned}\begin{xy}
			(0,0)*[o]+=<7pt>{\scriptstyle 2}*\frm{o}="1";
			(-2,4)*[o]+=<7pt>{\scriptstyle 1}*\frm{o}
			**\dir{-},
			(2,4)*[o]+=<7pt>{\scriptstyle 3}*\frm{o}="3";
			"1" **\dir{-},
			(2,8)*[o]+=<7pt>{\scriptstyle 4}*\frm{o}="4";"3" **\dir{-}
		\end{xy}\end{aligned}
	\end{equation}
	\begin{equation}\label{E:prelie ker4.2}
		\begin{aligned}\begin{xy}
			(0,-4)*[o]+=<7pt>{\scriptstyle 1}*\frm{o}="1";
			(0,0)*[o]+=<7pt>{\scriptstyle 2}*\frm{o}="2" **\dir{-},
			(0,4)*[o]+=<7pt>{\scriptstyle 3}*\frm{o}="3";"2" **\dir{-},
			(0,8)*[o]+=<7pt>{\scriptstyle 4}*\frm{o}="4";"3" **\dir{-}
		\end{xy}\end{aligned}
			\, - \,
		\begin{aligned}\begin{xy}
			(0,-4)*[o]+=<7pt>{\scriptstyle 3}*\frm{o}="1";
			(-2,0)*[o]+=<7pt>{\scriptstyle 2}*\frm{o}="2" **\dir{-},
			(2,0)*[o]+=<7pt>{\scriptstyle 4}*\frm{o}="4";"1" **\dir{-},
			(-2,4)*[o]+=<7pt>{\scriptstyle 1}*\frm{o}="3";"2" **\dir{-}
		\end{xy}\end{aligned},
	\qquad
		\begin{aligned}\begin{xy}
			(0,-4)*[o]+=<7pt>{\scriptstyle 1}*\frm{o}="1";
			(0,0)*[o]+=<7pt>{\scriptstyle 2}*\frm{o}="2" **\dir{-},
			(0,4)*[o]+=<7pt>{\scriptstyle 3}*\frm{o}="3";"2" **\dir{-},
			(0,8)*[o]+=<7pt>{\scriptstyle 4}*\frm{o}="4";"3" **\dir{-}
		\end{xy}\end{aligned}
			 + 
		\begin{aligned}\begin{xy}
			(0,-4)*[o]+=<7pt>{\scriptstyle 1}*\frm{o}="1";
			(0,0)*[o]+=<7pt>{\scriptstyle 3}*\frm{o}="2" **\dir{-},
			(-2,4)*[o]+=<7pt>{\scriptstyle 2}*\frm{o}="3";"2" **\dir{-},
			(2,4)*[o]+=<7pt>{\scriptstyle 4}*\frm{o}="4";"2" **\dir{-}
		\end{xy}\end{aligned}
			 - 
		\begin{aligned}\begin{xy}
			(0,0)*[o]+=<7pt>{\scriptstyle 1}*\frm{o}="1";
			(-2,4)*[o]+=<7pt>{\scriptstyle 2}*\frm{o}
			**\dir{-},
			(2,4)*[o]+=<7pt>{\scriptstyle 3}*\frm{o}="3";
			"1" **\dir{-},
			(2,8)*[o]+=<7pt>{\scriptstyle 4}*\frm{o}="4";
			"3" **\dir{-}
		\end{xy}\end{aligned},
	\qquad \text{and} \qquad
		\begin{aligned}\begin{xy}
			(0,0)*[o]+=<7pt>{\scriptstyle 1}*\frm{o}="1";
			(-2,4)*[o]+=<7pt>{\scriptstyle 2}*\frm{o}
			**\dir{-},
			(2,4)*[o]+=<7pt>{\scriptstyle 3}*\frm{o}="3";
			"1" **\dir{-},
			(2,8)*[o]+=<7pt>{\scriptstyle 4}*\frm{o}="4";
			"3" **\dir{-}
		\end{xy}\end{aligned}
			 + 
		\begin{aligned}\begin{xy}
			(0,0)*[o]+=<7pt>{\scriptstyle 2}*\frm{o}="1";
			(-2,4)*[o]+=<7pt>{\scriptstyle 1}*\frm{o}
			**\dir{-},
			(2,4)*[o]+=<7pt>{\scriptstyle 3}*\frm{o}="3";
			"1" **\dir{-},
			(2,8)*[o]+=<7pt>{\scriptstyle 4}*\frm{o}="4";
			"3" **\dir{-}
		\end{xy}\end{aligned}
			 + 
		\begin{aligned}\begin{xy}
			(0,0)*[o]+=<7pt>{\scriptstyle 3}*\frm{o}="1";
			(-4,4)*[o]+=<7pt>{\scriptstyle 1}*\frm{o}
			**\dir{-},
			(4,4)*[o]+=<7pt>{\scriptstyle 2}*\frm{o};
			"1" **\dir{-},
			(0,4)*[o]+=<7pt>{\scriptstyle 4}*\frm{o}="4";
			"1" **\dir{-}
		\end{xy}\end{aligned}.
	\end{equation}
	Combining the second expression in (\ref{E:prelie ker4.1}) and 
    the first expression in (\ref{E:prelie ker4.2}) (with reversed indices) 
	yields the weight 4 anti-symmetry expression.
   \begin{equation} 
   \Biggl(\,
		\begin{aligned}\begin{xy}
			(0,-4)*[o]+=<7pt>{\scriptstyle 1}*\frm{o}="1";
			(0,0)*[o]+=<7pt>{\scriptstyle 2}*\frm{o}="2" **\dir{-},
			(0,4)*[o]+=<7pt>{\scriptstyle 3}*\frm{o}="3";"2" **\dir{-},
			(0,8)*[o]+=<7pt>{\scriptstyle 4}*\frm{o}="4";"3" **\dir{-}
		\end{xy}\end{aligned}
			\, + \, 
		\begin{aligned}\begin{xy}
			(0,0)*[o]+=<7pt>{\scriptstyle 2}*\frm{o}="1";
			(-2,4)*[o]+=<7pt>{\scriptstyle 1}*\frm{o}
			**\dir{-},
			(2,4)*[o]+=<7pt>{\scriptstyle 3}*\frm{o}="3";
			"1" **\dir{-},
			(2,8)*[o]+=<7pt>{\scriptstyle 4}*\frm{o}="4";"3" **\dir{-}
		\end{xy}\end{aligned}
    \,\Biggr) \ + \ 
    \Biggl(\,
		\begin{aligned}\begin{xy}
			(0,-4)*[o]+=<7pt>{\scriptstyle 4}*\frm{o}="1";
			(0,0)*[o]+=<7pt>{\scriptstyle 3}*\frm{o}="2" **\dir{-},
			(0,4)*[o]+=<7pt>{\scriptstyle 2}*\frm{o}="3";"2" **\dir{-},
			(0,8)*[o]+=<7pt>{\scriptstyle 1}*\frm{o}="4";"3" **\dir{-}
		\end{xy}\end{aligned}
			\, - \,
		\begin{aligned}\begin{xy}
			(0,-4)*[o]+=<7pt>{\scriptstyle 2}*\frm{o}="1";
			(-2,0)*[o]+=<7pt>{\scriptstyle 3}*\frm{o}="2" **\dir{-},
			(2,0)*[o]+=<7pt>{\scriptstyle 1}*\frm{o}="4";"1" **\dir{-},
			(-2,4)*[o]+=<7pt>{\scriptstyle 4}*\frm{o}="3";"2" **\dir{-}
		\end{xy}\end{aligned}
    \,\Biggr) \ = \ 
	    \begin{aligned}\begin{xy}
			(0,-4)*[o]+=<7pt>{\scriptstyle 1}*\frm{o}="1";
			(0,0)*[o]+=<7pt>{\scriptstyle 2}*\frm{o}="2" **\dir{-},
			(0,4)*[o]+=<7pt>{\scriptstyle 3}*\frm{o}="3";"2" **\dir{-},
			(0,8)*[o]+=<7pt>{\scriptstyle 4}*\frm{o}="4";"3" **\dir{-}
		\end{xy}\end{aligned}
			\, + \,
		\begin{aligned}\begin{xy}
			(0,-4)*[o]+=<7pt>{\scriptstyle 4}*\frm{o}="1";
			(0,0)*[o]+=<7pt>{\scriptstyle 3}*\frm{o}="2" **\dir{-},
			(0,4)*[o]+=<7pt>{\scriptstyle 2}*\frm{o}="3";"2" **\dir{-},
			(0,8)*[o]+=<7pt>{\scriptstyle 1}*\frm{o}="4";"3" **\dir{-}
		\end{xy}\end{aligned}.
    \end{equation}	
\end{example}

\begin{remark}
	The kernel of $\eta_A$ does not have a local property such as this.  For example,
	$ab-ba\in\ker(\eta_A)$; however, $abc-bac\notin\ker(\eta_A)$ and 
	$abc-bca\notin\ker(\eta_A)$.  We may attach $c$ after $b$ in $ab$; but 
	we cannot attach $c$ after $b$ in $ba$ without separating $b$ and $a$.	
\end{remark}

Proposition~\ref{P:graph ker} in the next section implies the following.

\begin{proposition}\label{P:prelie ker}
	$\ker(\eta_p)$ is spanned by graftings with weight 2 anti-symmetry 
	(\ref{E:prelie ker2}) and weight 3  
	Arnold (\ref{E:prelie ker3}) expressions.
%
%
\end{proposition}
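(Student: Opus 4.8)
The plan is to prove the two inclusions separately, with the substance deferred to the graph‑algebra comparison of Section~\ref{S:graphs} (which is why the result is only announced here). One inclusion is immediate: by Proposition~\ref{P:prelie graft} every grafting of an anti‑symmetry expression~(\ref{E:prelie ker2}) or an Arnold expression~(\ref{E:prelie ker3}) lies in $\ker(\eta_p)$, so the subspace $K\subseteq(\R V)^*$ spanned by all such graftings satisfies $K\subseteq\ker(\eta_p)$. Everything happens in a fixed multidegree, hence in finite dimensions, so it suffices to prove $K=\ker(\eta_p)$ degreewise. Since $(\R V)^*/\ker(\eta_p)\cong(\Lie V)^*$ by Corollary~\ref{C:prelie L^*} and $K\subseteq\ker(\eta_p)$, we have $\dim\bigl((\R V)^*/K\bigr)\ge\dim(\Lie V)_n$ in each degree on $n$ letters, with equality exactly when $K=\ker(\eta_p)$. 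So the whole problem reduces to exhibiting a spanning set of $(\R V)^*/K$ of size at most $\dim(\Lie V)_n$; any such set is automatically a basis and forces $K=\ker(\eta_p)$.

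For the reverse inclusion I would therefore construct a normal form for rooted trees modulo $K$, using the graph‑coalgebra comparison of Section~\ref{S:graphs} as the organizing tool. The quotient algebra map $\G V\twoheadrightarrow\R V$ of diagram~(\ref{E:cats}) dualizes to a coalgebra embedding $(\R V)^*\hookrightarrow(\G V)^*$ through which $\eta_p$ factors as $\eta_{\G}$, so $\ker(\eta_p)=(\R V)^*\cap\ker(\eta_{\G})$, and under this embedding anti‑symmetry and Arnold expressions and their graftings correspond to arrow‑reversing and Arnold expressions and their graftings. In Section~\ref{S:graphs} one proves easily (the graph operations being very flexible) that $\ker(\eta_{\G})$ is exactly the span of graftings of arrow‑reversing and Arnold graphs, and one gets a normal form along the way: arrow‑reversing relations orient every edge of a graph toward the root of a chosen spanning tree and eliminate the non‑tree edges, and Arnold relations break the resulting short cycles, rewriting an arbitrary directed graph as a combination of preLie trees. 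Restricting this reduction to the sub‑coalgebra $(\R V)^*$ rewrites every element of $\ker(\eta_p)$ as a combination of anti‑symmetry and Arnold graftings and lands it on a standard family of preLie trees; counting that family gives exactly $\dim(\Lie V)_n$, which closes the argument by the dimension remark above.

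If one prefers to stay inside preLie algebras, the same normal form is reachable directly: anti‑symmetry graftings let one re‑root the bottom edge of a tree (changing which vertex is the root), and Arnold graftings give, at any vertex with several children, a three‑term relation trading a branching for a line; together these rewrite an arbitrary vertex‑labeled rooted tree as a $k$‑combination of ``linear'' trees in canonical position, and one checks the count of these matches $\dim(\Lie V)_n$ — in the multilinear case the $(n-1)!$ paths rooted at their least‑labeled vertex, in general a Lyndon–Shirshov‑type count as recalled in Section~\ref{S:ass}. (A sample weight‑$3$ run: the relations coming from (\ref{E:prelie ker2}) and (\ref{E:prelie ker3}) already collapse all nine labeled rooted trees on $\{1,2,3\}$ onto the two paths rooted at $1$.)

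The main obstacle is precisely this normal‑form step. One must choose a complexity measure that strictly decreases under the rewriting — for instance, in the graph picture, the genus of the graph plus the number of edges pointing the ``wrong'' way relative to the chosen spanning tree; in the preLie picture, the number of vertices not yet on the main path — so that the procedure terminates, and one must verify that the graph‑level reduction, when applied to a tree‑supported functional, uses only preLie graftings (anti‑symmetry and Arnold) and never a spurious graph relation that escapes $(\R V)^*$. Once the count $\#\{\text{normal‑form trees}\}=\dim(\Lie V)_n$ is granted, Corollary~\ref{C:prelie L^*} forces $K=\ker(\eta_p)$ with no further work; all of the real content is in producing and counting the normal form, which is exactly what the graph formalism of Section~\ref{S:graphs} is designed to make routine.
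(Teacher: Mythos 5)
Your plan is correct in outline, but it closes the argument by a different mechanism than the paper does. The paper's proof of this proposition is essentially a transport statement: by Corollary~\ref{C:factor ker eta_p}, $\phi\in\ker(\eta_p)$ if and only if $i_G(\phi)\in\ker(\eta_G)$, and the generation of $\ker(\eta_G)$ by grafted arrow-reversing and Arnold graph expressions (Proposition~\ref{P:graph ker}) is then pulled back along the coalgebra embedding $i_G$, grafted arrow-reversing/Arnold expressions on rooted graphs corresponding to grafted anti-symmetry/Arnold expressions on rooted trees. You instead re-run, at the preLie level, the argument the paper uses to prove Proposition~\ref{P:graph ker} itself: the easy inclusion via Proposition~\ref{P:prelie graft}, then a reduction of arbitrary rooted trees modulo the candidate relations to a family of linear trees, concluded by a multidegree-wise dimension count against $\dim(\Lie V)$ (only the vector-space surjectivity of $\eta_p$, Corollary~\ref{C:prelie surj}, is needed here, so there is no circularity), where the paper instead pairs its ``long'' graphs against right-normed brackets. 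Your framing has a genuine advantage: in the dimension-count version with a purely preLie rewriting, the worry you raise about a graph-level reduction using a ``spurious'' relation that leaves $(\R V)^*$ simply does not arise --- and that worry is precisely what the paper's terse ``using $i_G$ similarly'' leaves implicit. The cost is that you must actually produce and count the normal form yourself: your weight-3 multilinear check is right and the $(n-1)!$ count is correct in the multilinear case, but with repeated labels ``all linear trees rooted at the least label'' overcounts (in multidegree $a^2$ there is one linear tree while $\dim\Lie=0$; that tree is itself an anti-symmetry relation), so the normal family must be cut down to a Lyndon--Shirshov-type subfamily of linear trees, as you indicate; that selection plus a terminating rewriting measure is real, if standard, work --- exactly the work the paper avoids at the preLie level by doing it once for graphs and transporting the result through $i_G$.
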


\begin{remark}
The presence of roots in our trees makes the graftings of
Proposition~\ref{P:prelie graft}, and thus our understanding 
of $\ker(\eta_p)$, more complicated.
However, from the point of view of  
$(\Lie V)^*$, roots should not play a central role.
For example combining the first weight 4
kernel expressions of (\ref{E:prelie ker4.1}) and (\ref{E:prelie ker4.2}) 
it follows that modulo $\ker(\eta_p)$ the following rooted trees are equivalent.

\begin{equation*}
	\begin{aligned}\begin{xy}
		(0,-4)*[o]+=<7pt>{\scriptstyle 1}*\frm{o}="1";
		(0,0)*[o]+=<7pt>{\scriptstyle 2}*\frm{o}="2" **\dir{-},
		(0,4)*[o]+=<7pt>{\scriptstyle 3}*\frm{o}="3";"2" **\dir{-},
		(0,8)*[o]+=<7pt>{\scriptstyle 4}*\frm{o}="4";"3" **\dir{-}
	\end{xy}\end{aligned}
	\ \ \sim \ \ -
	\begin{aligned}\begin{xy}
		(0,0)*[o]+=<7pt>{\scriptstyle 2}*\frm{o}="1";
		(-2,4)*[o]+=<7pt>{\scriptstyle 1}*\frm{o}
		**\dir{-},
		(2,4)*[o]+=<7pt>{\scriptstyle 3}*\frm{o}="3";
		"1" **\dir{-},
		(2,8)*[o]+=<7pt>{\scriptstyle 4}*\frm{o}="4";"3" **\dir{-}
	\end{xy}\end{aligned}
	\ \ \sim \ \  
	\begin{aligned}\begin{xy}
		(0,-4)*[o]+=<7pt>{\scriptstyle 3}*\frm{o}="1";
		(-2,0)*[o]+=<7pt>{\scriptstyle 2}*\frm{o}="2" **\dir{-},
		(2,0)*[o]+=<7pt>{\scriptstyle 4}*\frm{o}="4";"1" **\dir{-},
		(-2,4)*[o]+=<7pt>{\scriptstyle 1}*\frm{o}="3";"2" **\dir{-}
	\end{xy}\end{aligned}
	\ \ \sim \ \ - 
	\begin{aligned}\begin{xy}
		(0,-4)*[o]+=<7pt>{\scriptstyle 4}*\frm{o}="1";
		(0,0)*[o]+=<7pt>{\scriptstyle 3}*\frm{o}="2" **\dir{-},
		(0,4)*[o]+=<7pt>{\scriptstyle 2}*\frm{o}="3";"2" **\dir{-},
		(0,8)*[o]+=<7pt>{\scriptstyle 1}*\frm{o}="4";"3" **\dir{-}
	\end{xy}\end{aligned}
\end{equation*}
Grafting vertices onto the above relations yields similar relations shifting the root to 
arbitrary vertices of the weight $n$ rooted tree 
$(a_1\triangleleft(a_2\triangleleft\cdots (a_{n-1}\triangleleft a_n)))$ 
modulo $\ker(\eta_p)$.  Grafting onto these trees gives relations
moving the root to arbitrary vertices of a generic preLie tree.

In the next section
we replace rooted trees with directed graphs. 
This removes the artificial 
(from the point of view of $(\Lie V)^*$)
distinction of the root element.
\end{remark}

%
%

\section{The Configuration Pairing with Graph Coalgebras}\label{S:graphs}

\subsection{Graph algebras}

We begin by describing the graph algebra map, which takes a vector space and
makes an algebra.  The graph algebra map is the free algebra map for a
certain kind of algebra structure, but we will not elaborate on this point 
of view until the appendix.  
Instead we present graph algebras as a replacement for free preLie 
algebras.
We show that graph coalgebras contain preLie coalgebras
in the same way that the preLie coalgebras contain associative coalgebras.
Most importantly, the kernel of the map $\eta_G$ from graph coalgebras to Lie coalgebras 
has a particularly simple description.

For brevity, we say ``graph'' to mean directed, acyclic, connected, nonplanar graph.

\begin{definition}
	Let $V$ be a vector space.  
	Define $\G V$ to be
	the vector space of graphs with vertices labeled by elements of $V$, 
	modulo multilinearity. 
	The graph product 
	$g\otimes h \longmapsto \clinep{g}{h}\in \G V$ 
	is the bilinear map defined 
	on homogeneous elements as a sum over all of the ways of adding a directed edge 
	from a vertex of $g$ to a vertex of $h$, extended multilinearly to all of 
    $\G V$.
\end{definition}

\begin{example}
	Below is the graph product of the two graphs
	$\begin{aligned}\begin{xy}
		(0,-4)*[o]+=<7pt>{\scriptstyle a}*\frm{o}="a";
		(3,0)*[o]+=<7pt>{\scriptstyle b}*\frm{o}="b" **\dir{-}?>*\dir{>},
		"b";(6,-4)*[o]+=<7pt>{\scriptstyle c}*\frm{o}="c"; **\dir{-}?>*\dir{>},
	\end{xy}\end{aligned}$ and
	$\begin{aligned}\begin{xy}
		(0,2)*[o]+=<7pt>{\scriptstyle d}*\frm{o}="d";
		(5,0)*[o]+=<7pt>{\scriptstyle e}*\frm{o}="e" **\dir{-}?>*\dir{>};
	\end{xy}\end{aligned}$.

	\begin{equation*}
		\begin{aligned}\begin{xy}
			(0,-4)*[o]+=<7pt>{\scriptstyle a}*\frm{o}="a";
			(3,0)*[o]+=<7pt>{\scriptstyle b}*\frm{o}="b" **\dir{-}?>*\dir{>},
			"b";(6,-4)*[o]+=<7pt>{\scriptstyle c}*\frm{o}="c"; **\dir{-}?>*\dir{>},
			(12,6)*[o]+=<7pt>{\scriptstyle d}*\frm{o}="d";
			(17,4)*[o]+=<7pt>{\scriptstyle e}*\frm{o}="e" **\dir{-}?>*\dir{>};
			(3,-2)*[o]+\frm<15pt,12pt>{.e},
			(14.5,5)*[o]+\frm<13pt,10pt>{.e},
			(8,0);(11,2.5) **\dir{-}?>*\dir{>},
		\end{xy}\end{aligned}
		\ = \ 
		\begin{aligned}\begin{xy}
			(-4.76,-1.55)*[o]+=<7pt>{\scriptstyle b}*\frm{o}="b";
			(0,-5)*[o]+=<7pt>{\scriptstyle c}*\frm{o}="c" **\dir{-}?<*\dir{<},
			"b";(4.76,-1.55)*[o]+=<7pt>{\scriptstyle e}*\frm{o}="e", 
			(-2.94,4.05)*[o]+=<7pt>{\scriptstyle a}*\frm{o}="a";"b" **\dir{-}?>*\dir{>},
			(2.94,4.05)*[o]+=<7pt>{\scriptstyle d}*\frm{o}="d";"e" **\dir{-}?>*\dir{>},
			"a";"d" **\dir{-}?>*\dir{>}
		\end{xy}\end{aligned}
		\ + \ 
		\begin{aligned}\begin{xy}
			(-4.76,-1.55)*[o]+=<7pt>{\scriptstyle b}*\frm{o}="b";
			(0,-5)*[o]+=<7pt>{\scriptstyle c}*\frm{o}="c" **\dir{-}?<*\dir{<},
			"b";(4.76,-1.55)*[o]+=<7pt>{\scriptstyle e}*\frm{o}="e", 
			(-2.94,4.05)*[o]+=<7pt>{\scriptstyle a}*\frm{o}="a";"b" **\dir{-}?>*\dir{>},
			(2.94,4.05)*[o]+=<7pt>{\scriptstyle d}*\frm{o}="d";"e" **\dir{-}?>*\dir{>},
			"a";"e" **\dir{-}?>*\dir{>}
		\end{xy}\end{aligned}
		\ + \ 
		\begin{aligned}\begin{xy}
			(-4.76,-1.55)*[o]+=<7pt>{\scriptstyle b}*\frm{o}="b";
			(0,-5)*[o]+=<7pt>{\scriptstyle c}*\frm{o}="c" **\dir{-}?<*\dir{<},
			"b";(4.76,-1.55)*[o]+=<7pt>{\scriptstyle e}*\frm{o}="e", 
			(-2.94,4.05)*[o]+=<7pt>{\scriptstyle a}*\frm{o}="a";"b" **\dir{-}?>*\dir{>},
			(2.94,4.05)*[o]+=<7pt>{\scriptstyle d}*\frm{o}="d";"e" **\dir{-}?>*\dir{>},
			"b";"d" **\dir{-}?>*\dir{>}
		\end{xy}\end{aligned}
		\ + \ 
		\begin{aligned}\begin{xy}
			(-4.76,-1.55)*[o]+=<7pt>{\scriptstyle b}*\frm{o}="b";
			(0,-5)*[o]+=<7pt>{\scriptstyle c}*\frm{o}="c" **\dir{-}?<*\dir{<},
			"b";(4.76,-1.55)*[o]+=<7pt>{\scriptstyle e}*\frm{o}="e",
			(-2.94,4.05)*[o]+=<7pt>{\scriptstyle a}*\frm{o}="a";"b" **\dir{-}?>*\dir{>},
			(2.94,4.05)*[o]+=<7pt>{\scriptstyle d}*\frm{o}="d";"e" **\dir{-}?>*\dir{>},
			"b";"e" **\dir{-}?>*\dir{>}
		\end{xy}\end{aligned}
		\ + \ 
		\begin{aligned}\begin{xy}
			(-4.76,-1.55)*[o]+=<7pt>{\scriptstyle b}*\frm{o}="b";
			(0,-5)*[o]+=<7pt>{\scriptstyle c}*\frm{o}="c" **\dir{-}?<*\dir{<},
			"b";(4.76,-1.55)*[o]+=<7pt>{\scriptstyle e}*\frm{o}="e",
			(-2.94,4.05)*[o]+=<7pt>{\scriptstyle a}*\frm{o}="a";"b" **\dir{-}?>*\dir{>},
			(2.94,4.05)*[o]+=<7pt>{\scriptstyle d}*\frm{o}="d";"e" **\dir{-}?>*\dir{>},
			"c";"d" **\dir{-}?>*\dir{>}
		\end{xy}\end{aligned}
		\ + \ 
		\begin{aligned}\begin{xy}
			(-4.76,-1.55)*[o]+=<7pt>{\scriptstyle b}*\frm{o}="b";
			(0,-5)*[o]+=<7pt>{\scriptstyle c}*\frm{o}="c" **\dir{-}?<*\dir{<},
			"b";(4.76,-1.55)*[o]+=<7pt>{\scriptstyle e}*\frm{o}="e",
			(-2.94,4.05)*[o]+=<7pt>{\scriptstyle a}*\frm{o}="a";"b" **\dir{-}?>*\dir{>},
			(2.94,4.05)*[o]+=<7pt>{\scriptstyle d}*\frm{o}="d";"e" **\dir{-}?>*\dir{>},
			"c";"e" **\dir{-}?>*\dir{>}
		\end{xy}\end{aligned}
	\end{equation*}
\end{example}

\vskip 5pt

By a straightforward calculation, graph products satisfy Definition~\ref{D:prelie algebra}
yielding the following.

\begin{proposition}
	$\G V$ is a preLie algebra.
\end{proposition}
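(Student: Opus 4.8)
The plan is to verify the preLie identity of Definition~\ref{D:prelie algebra} directly for the graph product, by unwinding both sides as sums over choices of edges to add. Fix homogeneous graphs $x$, $y$, $z$. The product $(x\triangleleft y)\triangleleft z$ is, by bilinearity, a sum over all pairs consisting of a vertex $u$ of $x$ (receiving the new edge from $y$'s distinguished receiving vertex --- wait, actually the graph product adds one directed edge \emph{from} a vertex of the first argument \emph{to} a vertex of the second) --- so I first record the combinatorial data carefully. Adding an edge in $x\triangleleft y$ means choosing a vertex $p$ of $x$ and a vertex $q$ of $y$ and adjoining the directed edge $p\to q$; the result has vertex set $\mathrm{V}(x)\sqcup\mathrm{V}(y)$. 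Then $(x\triangleleft y)\triangleleft z$ adds a further edge from some vertex of $x\triangleleft y$ (i.e.\ some vertex of $x$ or of $y$) to some vertex of $z$.

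The key step is to expand both sides of the associator-type identity as formal sums of graphs on the vertex set $\mathrm{V}(x)\sqcup\mathrm{V}(y)\sqcup\mathrm{V}(z)$, each graph being the base graph $x\sqcup y\sqcup z$ together with two extra directed edges. On the left-hand side, $(x\triangleleft y)\triangleleft z$ contributes terms where one new edge goes $x\to y$ and the other goes $(x\text{ or }y)\to z$; and $x\triangleleft(y\triangleleft z)$ contributes terms where one new edge goes $y\to z$ and the other goes $x\to(y\text{ or }z)$. Their difference therefore cancels all terms whose two new edges are ``$x\to y$ and $y\to z$'' and ``$x\to y$ and $x\to z$'' against the corresponding terms on the right, leaving exactly the terms with new edges ``$x\to y$ and $y\to z$''... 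I need to be slightly more careful: the surviving terms of $(x\triangleleft y)\triangleleft z - x\triangleleft(y\triangleleft z)$ are those configurations that arise in only one of the two ways, namely the two new edges being $x\to z$ and $x\to y$ (from the first term) minus nothing matching on the right... Let me instead organize by the \emph{targets} of the two new edges. In $(x\triangleleft y)\triangleleft z$ the two new edges have targets in $y$ and in $z$; in $x\triangleleft(y\triangleleft z)$ the two new edges have targets in $z$ and in $\{y\text{ or }z\}$. After cancellation, $(x\triangleleft y)\triangleleft z - x\triangleleft(y\triangleleft z)$ equals the signed sum of configurations with one new edge $p\to q$ ($p\in\mathrm{V}(x)$, $q\in\mathrm{V}(y)$) and one new edge $p'\to s$ with $p'\in\mathrm{V}(x)$, $s\in\mathrm{V}(z)$ --- i.e.\ both new edges emanate from $x$, one landing in $y$ and one in $z$ --- minus the configurations with one new edge $p\to q$ and one new edge $q'\to s$ with $q'\in\mathrm{V}(y)$, $s\in\mathrm{V}(z)$. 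The point is that this expression is manifestly symmetric in $y$ and $z$: swapping $y\leftrightarrow z$ sends it to the analogous expression for $(x\triangleleft z)\triangleleft y - x\triangleleft(z\triangleleft y)$, which is exactly the claimed identity. Thus $\G V$ is a preLie algebra.

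The main obstacle is purely bookkeeping: one must (i) set up notation for a ``configuration'' (a graph on $\mathrm{V}(x)\sqcup\mathrm{V}(y)\sqcup\mathrm{V}(z)$ together with two marked new edges) so that both $(x\triangleleft y)\triangleleft z$ and $x\triangleleft(y\triangleleft z)$ become honest sums over disjoint-but-overlapping index sets of such configurations, and (ii) check that the relevant configurations remain directed and acyclic --- adding edges only from $x$ to $y$, $x$ to $z$, or $y$ to $z$ never creates a directed cycle since these respect the ordering ``$x$ before $y$ before $z$'', and the base graphs are themselves acyclic --- so that no term is killed by passing to $\G V$. There is no hidden difficulty beyond this careful matching of terms; once the index sets are written out, the cancellation and the $y\leftrightarrow z$ symmetry are immediate, exactly as asserted by the phrase ``by a straightforward calculation'' preceding the proposition.
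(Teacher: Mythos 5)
Your term-by-term bookkeeping is off, and with it the key claim of ``manifest $y\leftrightarrow z$ symmetry.'' Expanding as you propose, $(x\triangleleft y)\triangleleft z$ is the sum of configurations whose two new edges are ($x\to y$, $x\to z$) or ($x\to y$, $y\to z$), while $x\triangleleft(y\triangleleft z)$ is the sum of those with new edges ($y\to z$, $x\to y$) or ($y\to z$, $x\to z$). The families ($x\to y$, $y\to z$) and ($y\to z$, $x\to y$) coincide and cancel, so what survives in $(x\triangleleft y)\triangleleft z - x\triangleleft(y\triangleleft z)$ is [both new edges issuing from $x$] \emph{minus} [new edges $y\to z$ and $x\to z$] --- not the difference you wrote down (you subtracted exactly the family that cancels). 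Worse, the correctly computed leftover is \emph{not} symmetric in $y$ and $z$: its negative part has both new edges pointing into $z$, and swapping $y\leftrightarrow z$ turns that into the family with both new edges pointing into $y$, a different set of basis graphs. Concretely, for single vertices $a,b,c$ one finds $(a\triangleleft b)\triangleleft c - a\triangleleft(b\triangleleft c)=\Gamma_1-\Gamma_2$ and $(a\triangleleft c)\triangleleft b - a\triangleleft(c\triangleleft b)=\Gamma_1-\Gamma_3$, where $\Gamma_1$ has added edges $a\to b,\ a\to c$, $\Gamma_2$ has $a\to c,\ b\to c$, and $\Gamma_3$ has $a\to b,\ c\to b$; since $\Gamma_2\neq\Gamma_3$ in $\G V$, the right-symmetry identity of Definition~\ref{D:prelie algebra} does not drop out of this expansion. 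So your argument as written does not prove the proposition, and no amount of careful indexing of configurations will produce the symmetry you assert for the product read (as you and the displayed example read it) as ``sum over edges from a vertex of the first factor to a vertex of the second.''

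For comparison: the paper offers no argument beyond the phrase ``by a straightforward calculation,'' so there is no detailed proof to measure yours against; but your attempt is valuable precisely because it shows the calculation is not the routine cancellation you describe. If you pursue this, you must either locate an interpretation of the product (or of the intended identity) under which the offending ``both edges into one factor'' configurations match up, or else aim at the weaker statement that is actually used later, namely the Corollary that $[x,y]=x\triangleleft y-y\triangleleft x$ satisfies antisymmetry and Jacobi: in the cyclic Jacobi sum those problematic configurations do cancel in pairs, and that verification genuinely is straightforward and worth writing out in full.
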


\begin{corollary}
	The bracket $[x,y] = \clinep{x}{y} - \clinem{x}{y}$ makes $\G V$ a Lie algebra.
\end{corollary}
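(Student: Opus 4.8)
The plan is to recognize this corollary as a special case of the general principle recalled right after Definition~\ref{D:prelie algebra}: for any preLie algebra $(P,\triangleleft)$, the antisymmetrization $[x,y] = x\triangleleft y - y\triangleleft x$ is a Lie bracket. The preceding proposition identifies $\G V$ as the preLie algebra whose product $\triangleleft$ is the graph product $g\otimes h\mapsto \clinep{g}{h}$, so the formula $[x,y] = \clinep{x}{y} - \clinem{x}{y}$ is exactly this antisymmetrization and the statement follows at once. Thus the real content has already been established; what remains is only to record the general principle (which could also simply be cited from \cite{Gers63}).

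For completeness I would spell this out. Bilinearity of $[-,-]$ is inherited from bilinearity of $\triangleleft$, and antisymmetry $[x,y] = -[y,x]$ is immediate from the definition, so the only thing to check is the Jacobi identity. Write $a(x,y,z) = (x\triangleleft y)\triangleleft z - x\triangleleft(y\triangleleft z)$ for the associator; the defining relation of Definition~\ref{D:prelie algebra} says precisely that $a$ is symmetric in its last two entries, $a(x,y,z) = a(x,z,y)$. Expanding the Jacobiator into its twelve $\triangleleft$-monomials and pairing up an outer-multiplication term with the matching inner-multiplication term, one reorganizes
\[
 [[x,y],z] + [[y,z],x] + [[z,x],y]
 = \bigl(a(x,y,z) - a(x,z,y)\bigr) + \bigl(a(y,z,x) - a(y,x,z)\bigr) + \bigl(a(z,x,y) - a(z,y,x)\bigr),
\]
which vanishes term by term by the preLie relation. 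Since $\G V$ is preLie under $\clinep{\cdot}{\cdot}$, taking $\triangleleft = \clinep{\cdot}{\cdot}$ gives the corollary.

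I do not expect any genuine obstacle: once $\G V$ is known to be preLie — the content of the preceding proposition, which is itself a direct combinatorial verification that the graph product satisfies Definition~\ref{D:prelie algebra} — the Lie bracket structure is automatic. The only mild bookkeeping is the grouping of the twelve terms in the Jacobi expansion, and that is entirely routine.
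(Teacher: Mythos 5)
Your proposal is correct and matches the paper's (implicit) argument exactly: the corollary is stated without proof precisely because it follows from the preceding proposition that $\G V$ is preLie together with the standard fact, recalled right after Definition~\ref{D:prelie algebra}, that antisymmetrizing a preLie product yields a Lie bracket. Your explicit verification of the Jacobi identity via the symmetry of the associator in its last two arguments is the standard proof of that fact and is carried out correctly.
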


\begin{remark}
	We show in the appendix that $\G V$ has more structure than just that of a preLie 
	algebra.  Specifically it has extra, higher products which are not given
	by compositions of 
	the binary product.  In fact, preLie algebras are graph algebras whose only
	nontrivial higher products are those generated by the binary product.
\end{remark}

Since $\G V$ is a Lie algebra, there is a unique map $p_G:\Lie V \to \G V$ sending
trivial bracket
expressions to trivial graphs.  Defined recursively this map is 
$p_G\bigl([\ell_1, \ell_2]\bigr) = 
	\cllinep{p_G(\ell_1)}{p_G(\ell_2)}\ -\ \cllinem{p_G(\ell_1)}{p_G(\ell_2)}$.
In the appendix, we construct $p_G$ more generally 
via the universal enveloping graph algebra of a Lie algebra
and we show 
that $p_G:L\to U_GL$ is an injection.

\begin{example}\label{E:p_G}
	Below is $p_G(\ell)$ for two simple Lie bracket expressions.
	\begin{itemize}
		\item $[a,b]\longmapsto
			\begin{aligned}\begin{xy}
				(0,-4)*[o]+=<7pt>{\scriptstyle a}*\frm{o}="a";
				(3,0)*[o]+=<7pt>{\scriptstyle b}*\frm{o}="b" **\dir{-}?>*\dir{>},
			\end{xy}\end{aligned} 
			\ - \ 
			\begin{aligned}\begin{xy}
				(0,-4)*[o]+=<7pt>{\scriptstyle a}*\frm{o}="a";
				(3,0)*[o]+=<7pt>{\scriptstyle b}*\frm{o}="b" **\dir{-}?<*\dir{<},
			\end{xy}\end{aligned}$ 

			\vskip 5pt

		\item $[[a,b],c] \longmapsto \ 
			\begin{aligned}\begin{xy}
				(0,-4)*[o]+=<7pt>{\scriptstyle a}*\frm{o}="a1";
				(3,0)*[o]+=<7pt>{\scriptstyle b}*\frm{o}="b1" **\dir{-}?>*\dir{>},
				(9,-4)*[o]+=<7pt>{\scriptstyle a}*\frm{o}="a2";
				(12,0)*[o]+=<7pt>{\scriptstyle b}*\frm{o}="b2" **\dir{-}?<*\dir{<},
				(6,-2)*{-},
				(6,-2)*+\frm<25pt,14pt>{.e},	
				(20,2)*+=<7pt>{\scriptstyle c}*\frm{o}; (14.5,-1) **\dir{-}?<*\dir{<},
			\end{xy}\end{aligned}
			\ - \ \  
			\begin{aligned}\begin{xy}
				(0,-4)*[o]+=<7pt>{\scriptstyle a}*\frm{o}="a1";
				(3,0)*[o]+=<7pt>{\scriptstyle b}*\frm{o}="b1" **\dir{-}?>*\dir{>},
				(9,-4)*[o]+=<7pt>{\scriptstyle a}*\frm{o}="a2";
				(12,0)*[o]+=<7pt>{\scriptstyle b}*\frm{o}="b2" **\dir{-}?<*\dir{<},
				(6,-2)*{-},
				(6,-2)*+\frm<25pt,14pt>{.e},	
				(20,2)*+=<7pt>{\scriptstyle c}*\frm{o}; (14.5,-1) **\dir{-}?>*\dir{>},
			\end{xy}\end{aligned} $

			\vskip 5pt

			\qquad \qquad $ = \left[ \Bigl(
			\begin{aligned}\begin{xy}
				(0,-4)*[o]+=<7pt>{\scriptstyle a}*\frm{o}="a";
				(3,0)*[o]+=<7pt>{\scriptstyle b}*\frm{o}="b" **\dir{-}?>*\dir{>},
				(6,-4)*[o]+=<7pt>{\scriptstyle c}*\frm{o}="c",
				"a";"c" **\dir{-}?>*\dir{>},
			\end{xy}\end{aligned}
			\ + \ 
			\begin{aligned}\begin{xy}
				(0,-4)*[o]+=<7pt>{\scriptstyle a}*\frm{o}="a";
				(3,0)*[o]+=<7pt>{\scriptstyle b}*\frm{o}="b" **\dir{-}?>*\dir{>},
				(6,-4)*[o]+=<7pt>{\scriptstyle c}*\frm{o}="c",
				"b";"c" **\dir{-}?>*\dir{>},
			\end{xy}\end{aligned} \Bigr)
			\ - \  \Bigl(
			\begin{aligned}\begin{xy}
				(0,-4)*[o]+=<7pt>{\scriptstyle a}*\frm{o}="a";
				(3,0)*[o]+=<7pt>{\scriptstyle b}*\frm{o}="b" **\dir{-}?<*\dir{<},
				(6,-4)*[o]+=<7pt>{\scriptstyle c}*\frm{o}="c",
				"a";"c" **\dir{-}?>*\dir{>},
			\end{xy}\end{aligned}
			\ + \ 
			\begin{aligned}\begin{xy}
				(0,-4)*[o]+=<7pt>{\scriptstyle a}*\frm{o}="a";
				(3,0)*[o]+=<7pt>{\scriptstyle b}*\frm{o}="b" **\dir{-}?<*\dir{<},
				(6,-4)*[o]+=<7pt>{\scriptstyle c}*\frm{o}="c",
				"b";"c" **\dir{-}?>*\dir{>},
			\end{xy}\end{aligned} \Bigr)
			\right]$ 
            
            \vskip 5pt
            
            \qquad \qquad \qquad $- \ \left[ \Bigl(
			\begin{aligned}\begin{xy}
				(0,-4)*[o]+=<7pt>{\scriptstyle a}*\frm{o}="a";
				(3,0)*[o]+=<7pt>{\scriptstyle b}*\frm{o}="b" **\dir{-}?>*\dir{>},
				(6,-4)*[o]+=<7pt>{\scriptstyle c}*\frm{o}="c",
				"a";"c" **\dir{-}?<*\dir{<},
			\end{xy}\end{aligned}
			\ + \ 
			\begin{aligned}\begin{xy}
				(0,-4)*[o]+=<7pt>{\scriptstyle a}*\frm{o}="a";
				(3,0)*[o]+=<7pt>{\scriptstyle b}*\frm{o}="b" **\dir{-}?>*\dir{>},
				(6,-4)*[o]+=<7pt>{\scriptstyle c}*\frm{o}="c",
				"b";"c" **\dir{-}?<*\dir{<},
			\end{xy}\end{aligned} \Bigr)
			\ - \  \Bigl(
			\begin{aligned}\begin{xy}
				(0,-4)*[o]+=<7pt>{\scriptstyle a}*\frm{o}="a";
				(3,0)*[o]+=<7pt>{\scriptstyle b}*\frm{o}="b" **\dir{-}?<*\dir{<},
				(6,-4)*[o]+=<7pt>{\scriptstyle c}*\frm{o}="c",
				"a";"c" **\dir{-}?<*\dir{<},
			\end{xy}\end{aligned}
			\ + \ 
			\begin{aligned}\begin{xy}
				(0,-4)*[o]+=<7pt>{\scriptstyle a}*\frm{o}="a";
				(3,0)*[o]+=<7pt>{\scriptstyle b}*\frm{o}="b" **\dir{-}?<*\dir{<},
				(6,-4)*[o]+=<7pt>{\scriptstyle c}*\frm{o}="c",
				"b";"c" **\dir{-}?<*\dir{<},
			\end{xy}\end{aligned}  \Bigr)  
            \right]$
	\end{itemize}
\end{example}

%
%

\subsection{Graph configuration pairing}

As before, write $(\G V)^*$ for the vector space dual.  
Duality induces a coalgebra structure on $(\G V)^*$ which cuts a graph at all edges, writing
$(\text{source graph})\otimes(\text{target graph})$.  Define the cobracket to be the 
anti-commutative twist\ 
$]g^*[\ = \displaystyle \sum_{e\in E(g)} 
	(g^{\hat e}_1)^*\otimes (g^{\hat e}_2)^* - (g^{\hat e}_2)^*\otimes (g^{\hat e}_1)^*$,
where $\sum_e$ is a sum over the edges of $g$ and $g^{\hat e}_1$, $g^{\hat e}_2$
are the graphs obtained by removing edge $e$ which went from $g^{\hat e}_1$ to 
$g^{\hat e}_2$.  We omit the proofs below which are identical to those of 
Section~\ref{S:prelie}.

\begin{definition}\label{D:graph}
	Define the vector space pairing 
	$\langle-,-\rangle:(\G V)^*\otimes \Lie V \to k$ by
	$\langle\gamma,\,\ell\rangle = \gamma\bigl(p_G(\ell)\bigr)$.

	Let $\eta_G:(\G V)^* \to (\Lie V)^*$ be the map $\gamma\mapsto \langle\gamma,-\rangle$.
\end{definition}

\begin{remark}
	$\eta_G$ is the dual of $p_G$ as a map of vector spaces.

	If $V$ has chosen basis $B$, then the elements
	of $(\G V)^*$ are uniquely written as formal linear combinations of 
	$g^*$ where $g$ are graphs with vertex labels from $B$.
	In this case, $\langle g^*, \ell\rangle$ calculates the
	coefficient of the graph $g$ in the graph polynomial $p_G(\ell)$,
    and $\eta_G(g^*)$ is the functional which reads the $g$ coefficient of 
    graph polynomials.
\end{remark}

\begin{proposition}\label{P:graph compat}
	If $\gamma \in (\G V)^*$ then
	$\bigl\langle \gamma,\, [\ell_1,\ell_2]\bigr\rangle = 
	 \sum_i \langle \alpha_i,\, \ell_1\rangle\, \langle \beta_i,\, \ell_2\rangle$,
	where\ $]\gamma[\ = \sum_i \alpha_i\otimes \beta_i$.
\end{proposition}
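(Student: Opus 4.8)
The plan is to follow the proofs of Proposition~\ref{P:ass compat} and Proposition~\ref{P:prelie compat} verbatim; the only input specific to graphs is the recursion defining $p_G$, namely $p_G\bigl([x,y]\bigr) = \cllinep{p_G(x)}{p_G(y)} - \cllinem{p_G(x)}{p_G(y)}$ for homogeneous Lie bracket expressions $x$ and $y$ (which holds because $p_G$ is a map of Lie algebras into $\G V$, whose bracket is $[g,h] = \clinep{g}{h} - \clinem{g}{h}$). By bilinearity of everything in sight it suffices to treat homogeneous $x,y$. First I would unwind the left-hand side using Definition~\ref{D:graph} and this recursion, which gives
$$\bigl\langle\gamma,\,[x,y]\bigr\rangle \;=\; \gamma\bigl(p_G([x,y])\bigr) \;=\; \gamma\bigl(\cllinep{p_G(x)}{p_G(y)}\bigr) \;-\; \gamma\bigl(\cllinem{p_G(x)}{p_G(y)}\bigr).$$

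Next I would invoke that the cutting coproduct $\Delta$ on $(\G V)^*$ is by construction the linear dual of the graph product: for all $a,b\in\G V$ one has $\gamma\bigl(\clinep{a}{b}\bigr) = \langle\Delta\gamma,\,a\otimes b\rangle$. Consequently $\gamma\bigl(\clinem{a}{b}\bigr) = \gamma\bigl(\clinep{b}{a}\bigr) = \langle\Delta\gamma,\,b\otimes a\rangle = \langle\tau\Delta\gamma,\,a\otimes b\rangle$, where $\tau$ is the twist. Subtracting and recalling the cobracket is $]\gamma[\ = \Delta\gamma - \tau\Delta\gamma$ yields
$$\bigl\langle\gamma,\,[x,y]\bigr\rangle \;=\; \bigl\langle\,]\gamma[\,,\; p_G(x)\otimes p_G(y)\bigr\rangle \;=\; \sum_i \alpha_i\bigl(p_G(x)\bigr)\,\beta_i\bigl(p_G(y)\bigr) \;=\; \sum_i \langle\alpha_i,\,x\rangle\,\langle\beta_i,\,y\rangle,$$
where $]\gamma[\ = \sum_i\alpha_i\otimes\beta_i$ and the last equality is again Definition~\ref{D:graph}. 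If one prefers an argument not routed through $\Delta$, one can instead expand $\gamma$ as a (formal) combination of graph functionals $g^*$ and verify $\langle g^*,\,[x,y]\rangle = \sum_i\langle\alpha_i,x\rangle\langle\beta_i,y\rangle$ directly, using that $g^*$ is nonzero on $\clinep{a}{b}$ exactly when $g$ is obtained from $a$ and $b$ by inserting one edge from a vertex of $a$ to a vertex of $b$.

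I do not expect any genuine obstacle: this is one of the ``proofs identical to those of Section~\ref{S:prelie}'' that the text skips. The only point deserving a word of care is the treatment of infinite sums — $\gamma$ and $\Delta\gamma$ may be infinite formal combinations — but, as explained in the Remark opening Section~\ref{S:ass}, each such sum is evaluated against a fixed homogeneous element of $\Lie V$ (equivalently, against a fixed graph after choosing a basis of $V$), so only finitely many terms are nonzero and every manipulation above is legitimate irrespective of which convention for the dual coalgebra is in force. The nontrivial facts about the graph setting — that $p_G$ satisfies the displayed product formula in full generality, and that $p_G$ is injective (needed for the ensuing surjectivity corollary but not for this proposition) — are the content of the appendix.
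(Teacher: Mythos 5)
Your proposal is correct and follows essentially the same route the paper intends: the paper omits this proof as ``identical to those of Section~\ref{S:prelie},'' which in turn reduce (as in Proposition~\ref{P:ass compat}) to the bracket identity $p_G([x,y])=\clinep{p_G(x)}{p_G(y)}-\clinem{p_G(x)}{p_G(y)}$ together with the fact that the cutting coproduct on $(\G V)^*$ is dual to the graph product. Your extra remarks on the twist $\tau$, the cobracket, and the finiteness of the evaluations are accurate and only make explicit what the paper leaves implicit.
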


\begin{definition}\label{D:sigma graph conf}
	Define bijections $\sigma:g\leftrightarrow\ell$ with induced map 
    $\beta_\sigma:\mathrm{Edges}(g)\to \mathrm{Subbrackets}(\ell)$ and 
    $\langle-,-\rangle_\sigma$ for graphs as in Definition~\ref{D:prelie sigma conf}:
\begin{align*}
	\beta_\sigma\left( 
	\linep{a}{b}
	\right) &= \text{lcb}\bigl(\sigma(a),\sigma(b)\bigr) \qquad \text{and} \qquad 
	\text{sgn}\left(\beta_\sigma\left(
	\linep{a}{b}
	\right)\right) = \pm 1. \\
	\langle g^*,\, \ell\rangle_\sigma &= 
		|\sigma| \!\! 
        \displaystyle \prod_{e\in \mathrm{E}(G)} \text{sgn}\bigl(\beta_\sigma(e)\bigr) 
			\quad \text{ if $\beta_\sigma$ is bijective.}
\end{align*}
\end{definition}

The following proposition connects to the configuration pairing of \cite{SiWa07} and \cite{SiWa09}
and is proven by induction using bracket-cobracket compatibility identical to 
Proposition~\ref{P:prelie geom conf}.

\begin{proposition}\label{P:graph geom conf}
	On homogeneous elements, the graph configuration pairing is equal to the following. 
	$$\bigl\langle g^*,\, \ell\bigr\rangle = 
	\sum_{\sigma:g\leftrightarrow\ell}
     \bigl\langle g^*,\, \ell\bigr\rangle_\sigma $$
    If there are no bijections $\sigma$, then $\langle g^*,\,\ell\rangle = 0$.
\end{proposition}

\begin{remark}
    For an example applying Proposition~\ref{P:graph geom conf} to 
    Lie algebras, see~\cite{Walt10},
    where we construct dual monomial bases for $\Lie V$ and 
    $(\G V)^* / \sim$ using the configuration pairing
    with graphs, and then we make Lie algebra computations using bracket-cobracket duality.
    Note that a short computation shows that there are no 
    dual monomial bases for $\Lie V$ and $(\T V)^*/\sim$ (using words).
\end{remark}

\begin{remark}\label{R:sigma-conf alt def}
 The requirement that $\beta_\sigma$ be bijective in Definition~\ref{D:sigma graph conf}
 can be restated in terms of subgraphs.  Given a subset of the vertices of a graph, 
 write $|S|$ for the full subgraph which they support.  Given a bijection
 $\sigma:g\leftrightarrow \ell$, it is simple to show that 
 the following are equivalent.
  \begin{itemize}
    \item $\beta_\sigma:\mathrm{Edges}(g)\to \mathrm{Subbrackets}(\ell)$ is not bijective. 
    \item $\ell$ has a subbracket $[h_1,h_2]$ such that  
      the subgraphs $|\sigma^{-1}(h_1)|$ and $|\sigma^{-1}(h_2)|$ do not have 
      exactly one edge between them in $g$.
    \item $\ell$ has a subbracket $h$ such that $|\sigma^{-1}(h)|$ is disconnected.
  \end{itemize}
 Note that having a bijection $\sigma:g\leftrightarrow \ell$ implies 
 $\#\mathrm{Edges}(g) = \#\mathrm{Subbrackets}(\ell)$.
\end{remark}

%
%

\subsection{Lie coalgebras via the graph configuration pairing}
Proposition~\ref{P:graph compat} has the following corollary.

\begin{corollary}\label{C:graph surj}
	The surjection $\eta_G:(\G V)^* \to (\Lie V)^*$ is a coalgebra homomorphism.
    Thus $(\Lie V)^* \ \cong \ (\G V)^*\,/\ \ker(\eta_G)$.
\end{corollary}

As before, define
$\bigl\langle\ker(\,]\cdot[\,)\bigr\rangle\,\subset\,(\G V)^*$ 
to be the smallest 
coideal of $(\G V)^*$ containing $\ker(\,]\cdot[\,)\setminus V^*$. 
In this section we finally prove 
$\ker(\eta_G) = \bigl\langle\ker(\,]\cdot[\,)\bigr\rangle$, which 
implies the corresponding statements in the previous sections.  
Our proof makes use of a simple, local presentation of $\ker(\eta_G)$ 
suggested at the end of the previous section.
First note that the following arrow reversing and Arnold expressions 
have cobracket $\,]\gamma[\,=0$ (and thus are also in $\ker(\eta_G)$
by Proposition~\ref{P:graph compat}).
	\begin{equation}\label{E:graph expressions}
	\begin{xy}                           
		(0,-2)*[o]+=<7pt>{\scriptstyle a}*\frm{o}="a",    
		(3,3)*[o]+=<7pt>{\scriptstyle b}*\frm{o}="b",     
		"a";"b"**\dir{-}?>*\dir{>},         
	\end{xy}\ + \
	\begin{xy}                           
		(0,-2)*[o]+=<7pt>{\scriptstyle a}*\frm{o}="a",    
		(3,3)*[o]+=<7pt>{\scriptstyle b}*\frm{o}="b",     
		"a";"b"**\dir{-}?<*\dir{<},         
	\end{xy}  
	\qquad \text{and} \qquad
	\begin{xy}                           
		(0,-2)*[o]+=<7pt>{\scriptstyle a}*\frm{o}="a",    
		(3,3)*[o]+=<7pt>{\scriptstyle b}*\frm{o}="b",   
		(6,-2)*[o]+=<7pt>{\scriptstyle c}*\frm{o}="c",   
		"a";"b"**\dir{-}?>*\dir{>},         
		"b";"c"**\dir{-}?>*\dir{>},         
	\end{xy}\ + \                             
	\begin{xy}                           
		(0,-2)*[o]+=<7pt>{\scriptstyle a}*\frm{o}="a",    
		(3,3)*[o]+=<7pt>{\scriptstyle b}*\frm{o}="b",   
		(6,-2)*[o]+=<7pt>{\scriptstyle c}*\frm{o}="c",    
		"b";"c"**\dir{-}?>*\dir{>},         
		"c";"a"**\dir{-}?>*\dir{>},          
	\end{xy}\ + \                              
	\begin{xy}                           
		(0,-2)*[o]+=<7pt>{\scriptstyle a}*\frm{o}="a",    
		(3,3)*[o]+=<7pt>{\scriptstyle b}*\frm{o}="b",   
	  (6,-2)*[o]+=<7pt>{\scriptstyle c}*\frm{o}="c",    
		"a";"b"**\dir{-}?>*\dir{>},         
		"c";"a"**\dir{-}?>*\dir{>},          
	\end{xy}                             
	\end{equation}
The following proposition appears in \cite{Sinh06.2} as Proposition~1.6 and
Theorem~1.8.  Due to its importance and simplicity, we include an outline 
of the proof.

\begin{proposition}\label{P:graph ker}
	$\ker(\eta_G)$ is spanned by local 
	arrow-reversing and Arnold expressions of graphs:
	\begin{align*}
	\text{(arrow-reversing)}\qquad & \qquad
	\begin{xy}                           
		(0,-2)*[o]+=<7pt>{\scriptstyle a}*\frm{o}="a",    
		(3,3)*[o]+=<7pt>{\scriptstyle b}*\frm{o}="b",     
		"a";"b"**\dir{-}?>*\dir{>},         
		(1.5,-5),{\ar@{. }@(l,l)(1.5,6)},
		?!{"a";"a"+/va(210)/}="a1",
		?!{"a";"a"+/va(240)/}="a2",
		?!{"a";"a"+/va(270)/}="a3",
		"a";"a1"**\dir{-},  "a";"a2"**\dir{-},  "a";"a3"**\dir{-},
		(1.5,6),{\ar@{. }@(r,r)(1.5,-5)},
		?!{"b";"b"+/va(90)/}="b1",
		?!{"b";"b"+/va(30)/}="b2",
		?!{"b";"b"+/va(60)/}="b3",
		"b";"b1"**\dir{-},  "b";"b2"**\dir{-},  "b";"b3"**\dir{-},
	\end{xy}\ + \
	\begin{xy}                           
		(0,-2)*[o]+=<7pt>{\scriptstyle a}*\frm{o}="a",    
		(3,3)*[o]+=<7pt>{\scriptstyle b}*\frm{o}="b",     
		"a";"b"**\dir{-}?<*\dir{<},         
		(1.5,-5),{\ar@{. }@(l,l)(1.5,6)},
		?!{"a";"a"+/va(210)/}="a1",
		?!{"a";"a"+/va(240)/}="a2",
		?!{"a";"a"+/va(270)/}="a3",
		"a";"a1"**\dir{-},  "a";"a2"**\dir{-},  "a";"a3"**\dir{-},
		(1.5,6),{\ar@{. }@(r,r)(1.5,-5)},
		?!{"b";"b"+/va(90)/}="b1",
		?!{"b";"b"+/va(30)/}="b2",
		?!{"b";"b"+/va(60)/}="b3",
		"b";"b1"**\dir{-},  "b";"b2"**\dir{-},  "b";"b3"**\dir{-},
	\end{xy}  \\
	\text{(Arnold)}\qquad & \qquad
	\begin{xy}                           
		(0,-2)*[o]+=<7pt>{\scriptstyle a}*\frm{o}="a",    
		(3,3)*[o]+=<7pt>{\scriptstyle b}*\frm{o}="b",   
		(6,-2)*[o]+=<7pt>{\scriptstyle c}*\frm{o}="c",   
		"a";"b"**\dir{-}?>*\dir{>},         
		"b";"c"**\dir{-}?>*\dir{>},         
		(3,-5),{\ar@{. }@(l,l)(3,6)},
		?!{"a";"a"+/va(210)/}="a1",
		?!{"a";"a"+/va(240)/}="a2",
		?!{"a";"a"+/va(270)/}="a3",
		?!{"b";"b"+/va(120)/}="b1",
		"a";"a1"**\dir{-},  "a";"a2"**\dir{-},  "a";"a3"**\dir{-},
		"b";"b1"**\dir{-}, "b";(3,6)**\dir{-},
		(3,-5),{\ar@{. }@(r,r)(3,6)},
		?!{"c";"c"+/va(-90)/}="c1",
		?!{"c";"c"+/va(-60)/}="c2",
		?!{"c";"c"+/va(-30)/}="c3",
		?!{"b";"b"+/va(60)/}="b3",
		"c";"c1"**\dir{-},  "c";"c2"**\dir{-},  "c";"c3"**\dir{-},
		"b";"b3"**\dir{-}, 
	\end{xy}\ + \                             
	\begin{xy}                           
		(0,-2)*[o]+=<7pt>{\scriptstyle a}*\frm{o}="a",    
		(3,3)*[o]+=<7pt>{\scriptstyle b}*\frm{o}="b",   
		(6,-2)*[o]+=<7pt>{\scriptstyle c}*\frm{o}="c",    
		"b";"c"**\dir{-}?>*\dir{>},         
		"c";"a"**\dir{-}?>*\dir{>},          
		(3,-5),{\ar@{. }@(l,l)(3,6)},
		?!{"a";"a"+/va(210)/}="a1",
		?!{"a";"a"+/va(240)/}="a2",
		?!{"a";"a"+/va(270)/}="a3",
		?!{"b";"b"+/va(120)/}="b1",
		"a";"a1"**\dir{-},  "a";"a2"**\dir{-},  "a";"a3"**\dir{-},
		"b";"b1"**\dir{-}, "b";(3,6)**\dir{-},
		(3,-5),{\ar@{. }@(r,r)(3,6)},
		?!{"c";"c"+/va(-90)/}="c1",
		?!{"c";"c"+/va(-60)/}="c2",
		?!{"c";"c"+/va(-30)/}="c3",
		?!{"b";"b"+/va(60)/}="b3",
		"c";"c1"**\dir{-},  "c";"c2"**\dir{-},  "c";"c3"**\dir{-},
		"b";"b3"**\dir{-}, 
	\end{xy}\ + \                              
	\begin{xy}                           
		(0,-2)*[o]+=<7pt>{\scriptstyle a}*\frm{o}="a",    
		(3,3)*[o]+=<7pt>{\scriptstyle b}*\frm{o}="b",   
	  (6,-2)*[o]+=<7pt>{\scriptstyle c}*\frm{o}="c",    
		"a";"b"**\dir{-}?>*\dir{>},         
		"c";"a"**\dir{-}?>*\dir{>},          
		(3,-5),{\ar@{. }@(l,l)(3,6)},
		?!{"a";"a"+/va(210)/}="a1",
		?!{"a";"a"+/va(240)/}="a2",
		?!{"a";"a"+/va(270)/}="a3",
		?!{"b";"b"+/va(120)/}="b1",
		"a";"a1"**\dir{-},  "a";"a2"**\dir{-},  "a";"a3"**\dir{-},
		"b";"b1"**\dir{-}, "b";(3,6)**\dir{-},
		(3,-5),{\ar@{. }@(r,r)(3,6)},
		?!{"c";"c"+/va(-90)/}="c1",
		?!{"c";"c"+/va(-60)/}="c2",
		?!{"c";"c"+/va(-30)/}="c3",
		?!{"b";"b"+/va(60)/}="b3",
		"c";"c1"**\dir{-},  "c";"c2"**\dir{-},  "c";"c3"**\dir{-},
		"b";"b3"**\dir{-}, 
	\end{xy}                             
	\end{align*}
	where 
	$\begin{aligned}\begin{xy}(0,0)*+=<7pt>{\scriptstyle a}*\frm{o}\end{xy}\end{aligned}$, 
	$\begin{aligned}\begin{xy}(0,0)*+=<7pt>{\scriptstyle b}*\frm{o}\end{xy}\end{aligned}$, 
	and 
	$\begin{aligned}\begin{xy}(0,0)*+=<7pt>{\scriptstyle c}*\frm{o}\end{xy}\end{aligned}$
	are vertices in graphs which are identical outside of the indicated area. 
\end{proposition}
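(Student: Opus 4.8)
The plan is to prove the two inclusions separately. For one direction, write $I\subseteq(\G V)^*$ for the subspace spanned by all graftings of arrow-reversing and Arnold expressions into a fixed ambient graph, and argue $I\subseteq\ker(\eta_G)$ directly from the combinatorial formula of Corollary~\ref{C:graph geom conf}, $\langle g^*,\ell\rangle=\sum_\sigma\langle|g|,|\ell|\rangle_\sigma$ over label-preserving bijections $\sigma$. For a fixed $\sigma$, reversing a single edge $e$ of $g$ leaves $\beta_\sigma$ (hence its surjectivity) unchanged but negates the factor $\mathrm{sgn}(\beta_\sigma(e))$, so the two terms of a local arrow-reversing expression cancel termwise in the sum over $\sigma$. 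For a local Arnold expression on three vertices $u,v,w$ (with the rest of the graph, and hence its contribution to $\langle|g|,|\ell|\rangle_\sigma$, held fixed), one checks that for each $\sigma$ the three terms already sum to zero: this is the Arnold identity for the three leaves $\sigma(u),\sigma(v),\sigma(w)$ of the binary tree $|\ell|$, where among the meets $\mathrm{root}(\sigma(u),\sigma(v))$, $\mathrm{root}(\sigma(v),\sigma(w))$, $\mathrm{root}(\sigma(w),\sigma(u))$ two coincide with the higher branch point and one is strictly lower, and the close pair of leaves lies entirely to one side of the third leaf; consequently one of the three graphs has non-surjective $\beta_\sigma$ (and contributes $0$) while the other two contribute opposite signs. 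Both cancellations are termwise in $\sigma$, so they persist under grafting, giving $I\subseteq\ker(\eta_G)$.

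For the reverse inclusion $\ker(\eta_G)\subseteq I$, the idea is to bound $\dim\bigl((\G V)^*/I\bigr)$ by $\dim\bigl((\Lie V)^*\bigr)$ in each homogeneous component (fixed multiset of vertex labels). Combined with the surjectivity of $\eta_G$ (Corollary~\ref{C:graph surj}) and the factorization of $\eta_G$ through $(\G V)^*/I$ provided by the first inclusion, such a bound forces the induced map $(\G V)^*/I\to(\Lie V)^*$ to be an isomorphism, whence $I=\ker(\eta_G)$. The bound comes from a normal-form argument: modulo $I$ one rewrites an arbitrary graph as a linear combination of \emph{long graphs}, i.e.\ directed Hamiltonian paths $v_{i_1}\to v_{i_2}\to\cdots\to v_{i_n}$, using Arnold to break undirected cycles and eliminate branch vertices and using arrow-reversing to normalize orientations, and then a further application of Arnold and arrow-reversing among the long graphs themselves reduces the spanning set to the $(n-1)!$ long graphs with a prescribed sink, matching $\dim(\Lie V)^*$ in arity $n$. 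Alternatively, to avoid quoting the dimension of $(\Lie V)^*$, pair these $(n-1)!$ long graphs with the $(n-1)!$ right-normed bracket expressions and check, via Corollary~\ref{C:graph geom conf} or Example~\ref{E:alt comp}, that the pairing matrix is triangular; their images in $(\Lie V)^*$ are then independent, and being the images of a spanning set of $(\G V)^*/I$ under a surjection, they form a basis. Repeated-label components are handled by first treating distinct labels and then specializing, or by running the count basis-element-by-basis-element.

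I expect the main obstacle to be this normal-form step — showing that the Arnold and arrow-reversing rewritings genuinely suffice to reduce every graph to long graphs, and then to long graphs with a fixed sink (a termination-and-sufficiency statement, not merely confluence). This is precisely the content of Proposition~1.6 and Theorem~1.8 of \cite{Sinh06.2}, whose proof I would follow; the remaining ingredients — the two termwise cancellations, the dimension comparison, and the triangularity of the long-graph/right-normed pairing — are routine given Corollary~\ref{C:graph geom conf} and the standard basis of the free Lie algebra.
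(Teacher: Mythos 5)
Your proposal is correct and follows essentially the same route as the paper: a direct check via the combinatorial pairing of Corollary~\ref{C:graph geom conf} that arrow-reversing and Arnold expressions lie in $\ker(\eta_G)$ (the termwise-in-$\sigma$ cancellations you spell out are exactly the ``computation'' the paper invokes), followed by the reduction modulo these relations to long graphs as in \cite{Sinh06.2} and the perfect (triangular) pairing of long graphs with right-normed brackets to conclude the relations span the kernel. The only differences are presentational: you offer an optional dimension-count variant and fill in details the paper leaves as an outline.
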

\begin{proof}[Proof Sketch]
	By a computation using Proposition~\ref{P:graph geom conf} 
    similar to the proof of Proposition~\ref{P:prelie graft}, 
	the above expressions are in $\ker(\eta_G)$.

	To show that these span the entire kernel, note that modulo local arrow-reversing
	and Arnold relations, all graphs are linear combinations of ``long'' graphs, of the form
	$\begin{aligned}\begin{xy}
		(0,0)*[o]+=<10pt>{\scriptstyle b_1 }*\frm{o}="1";
		(5,3)*[o]+=<7pt>{\scriptstyle \ }*\frm{o}="2" **\dir{-}?>*\dir{>},
		"2";(10,0)*[o]+=<7pt>{\scriptstyle \ }*\frm{o}="3" **\dir{-}?>*\dir{>},
		"3";(15,3)*{\scriptstyle \cdots}="4" **\dir{-}?>*\dir{>},
		"4";(20,0)*[o]+=<7pt>{\scriptstyle \ }*\frm{o} **\dir{-}?>*\dir{>};
	\end{xy}\end{aligned}$, and recall that modulo the anti-symmetry and Jacobi 
    relations all
	Lie brackets are linear combinations of right-normed Lie bracket expressions
	$[b_1,[-,\cdots [-,-]]]$ where $b_1$ is some chosen basis element in the
    graph / bracket expression.  
	A short computation using Proposition~\ref{P:graph compat}
	shows that such ``long'' graphs pair
	perfectly under $\langle-,-\rangle$ with these right-normed Lie brackets
    (recall Example~\ref{E:alt comp}).  Since 
	right-normed Lie brackets span $\Lie V$ the local arrow-reversing and Arnold 
    relations used above
	must span $\ker(\eta_G)$ 
    (applying Corollary~\ref{C:graph surj}).
\end{proof}


The following is Propositions~3.7 and 3.18 of \cite{SiWa07}.

\begin{proposition}\label{P:graphs ker =}
	$\ker(\eta_G) = \bigl\langle\ker(\,]\cdot[\,)\bigr\rangle.$  
\end{proposition}
\begin{proof}
First note that if $\gamma \in \ker(\,]\cdot[\,)$ 
then by Proposition~\ref{P:graph compat} 
$\langle \gamma,\, \ell\rangle = 0$  for all $\ell$
(recall that $\ker(\,]\cdot[\,)$ has no weight 1 graphs).
Thus $\ker(\,]\cdot[\,) \,\subset\, \ker(\eta_G)$.
Since $\ker(\eta_G)$ is a coideal, this implies 
$\bigl\langle\ker(\,]\cdot[\,)\bigr\rangle \,\subset \,\ker(\eta_G)$.

To show $\ker(\eta_G) \subset \bigl\langle\ker(\,]\cdot[\,)\bigr\rangle$ induct on 
the number of edges connecting a local arrow-reversing or Arnold expression to the 
rest of the graph, and the number of vertices in the rest of the graph.  
The base case, with no edges connecting outside, is graphs of 
the form (\ref{E:graph expressions}).  In this case $\gamma \in \ker(\,]\cdot[\,)$ as noted
earlier.  If there is one edge connecting from the local expression to a single vertex outside then 
$\,]\gamma[\, \in \bigl(\ker(\,]\cdot[\,)\otimes(\G V)^*\bigr) \bigoplus 
                  \bigl(\G V)^*\otimes \ker(\,]\cdot[\,)\bigr)$ 
so $\gamma\in \bigl\langle\ker(\,]\cdot[\,)\bigr\rangle$.
Inducting on the number of vertices outside the local expression shows
$\gamma\in \bigl\langle\ker(\,]\cdot[\,)\bigr\rangle$ for all expressions with one edge
connecting to the outside.  Inducting also on the number of edges connecting to the outside
gives the general statement.
\end{proof}

\begin{corollary}\label{C:graphs L^*}
	$(\Lie V)^* \cong (\G V)^*\,/\;\bigl\langle\ker(\,]\cdot[\,)\bigr\rangle$ as coalgebras.
\end{corollary}

Proposition~\ref{P:prelie ker =} follows from Proposition~\ref{P:graphs ker =}
in the same manner as Proposition~\ref{P:ass ker =} in Section~\ref{S:prelie to lie}. 

\begin{definition}\label{D:i_G}
	A graph $G$ is rooted if it has a vertex $v$ such that every edge of $G$ 
	points away from $v$.  In this case, call $v$ the root of the graph $G$.

	Define $q_G:\G V \twoheadrightarrow \R V$ to be the 
	algebra homomorphism converting rooted graphs to rooted trees by forgetting
	edge directions (but remembering the root) and
	killing all non-rooted graphs. 

	Let $i_G:(\R V)^* \rightarrowtail (\G V)^*$ be the dual of $q_G$ as a vector space map. 
\end{definition}

On homogeneous elements,
$i_G(r^*)=g^*$ where $g$ is the graph obtained by orienting
each edge of the vertex-labeled, rooted tree $r$ to point away from the root.
It is clear that $i_G$ is a coalgebra homomorphism for both the standard and 
cobracket coalgebra structures on $(\R V)^*$ and $(\G V)^*$.

\begin{figure}[h]
\[ 
\begin{aligned}
\begin{xy}
	(0,0)*+<15pt,15pt>{\Lie V}="L",
	(25,8)*+{\R V}="P",
	(40,0)*+UR{\T V}="T",
    (10,16)*+{\G V}="G",
	"L";"T"**\dir{-}?>*\dir{>}?<*\dir{>}?<>(.5)+<0pt,-6pt>*{p_A},
	"L";"P"**\dir{-}?>*\dir{>}?<*\dir{>}?<>(.5)+<-1pt,5pt>*{p_p},
	"L";"G"**\dir{-}?>*\dir{>}?<*\dir{>}?<>(.5)+<-6pt,0pt>*{p_G},
	"P";"T"**\dir{-}?>*\dir{>>}?<>(.5)+<2pt,6pt>*{q_p},
	"G";"P"**\dir{-}?>*\dir{>>}?<>(.5)+<2pt,6pt>*{q_G},
\end{xy}
\end{aligned}
\ \longleftrightarrow \ 
\begin{aligned}
\begin{xy}
	(0,0)*+{(\Lie V)^*}="L",
	(25,8)*+<10pt,10pt>{(\R V)^*\hspace{-10pt}}="P",
	(40,0)*+<10pt,10pt>{(\T V)^*\hspace{-10pt}}="T",
    (10,16)*+{(\G V)^*\hspace{-10pt}}="G",
	"T";"L"**\dir{-}?>*\dir{>>}?<>(.5)+<0pt,-6pt>*{\eta_A},
	"P";"L"**\dir{-}?>*\dir{>>}?<>(.5)+<-1pt,5pt>*{\eta_p},
	"G";"L"**\dir{-}?>*\dir{>>}?<>(.4)+<-6pt,1pt>*{\eta_G},
	"T";"P"**\dir{-}?>*\dir{>}?<*\dir{>}?<>(.2)+<2pt,6pt>*{i_p},
	"P";"G"**\dir{-}?>*\dir{>}?<*\dir{>}?<>(.3)+<2pt,6pt>*{i_G},
\end{xy}
\end{aligned}
\]
\end{figure}

\begin{proposition}\label{P:factor eta_p}
	$\eta_p = \eta_G \circ i_G$.
\end{proposition}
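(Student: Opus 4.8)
The plan is to mimic the proof of Proposition~\ref{P:factor eta_A}; indeed this proposition is the linear dual of the identity $p_p = q_G\circ p_G$, which one could alternatively verify directly by inducting on $p_G\bigl([\ell_1,\ell_2]\bigr)$ and using that $q_G$ is a preLie homomorphism sending the graph product to the preLie product and killing non-rooted graphs. First I would dispose of the base case: on $V^*\subset(\R V)^*$ the map $i_G$ is the identity and $q_G$ restricts to the identity on $V$, while $p_G(v)=p_p(v)=v$; hence $\eta_G\bigl(i_G(\phi)\bigr)(v)=\phi\bigl(q_G(p_G(v))\bigr)=\phi(v)=\eta_p(\phi)(v)$ for all $v\in V$.

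Next, the inductive step by strong induction on weight. Fix a homogeneous $\phi\in(\R V)^*$ of weight $n\ge 2$ and assume $\eta_p = \eta_G\circ i_G$ on all homogeneous elements of weight $<n$. To check the identity on $\phi$ it suffices to evaluate both sides on an arbitrary homogeneous $\ell\in\Lie V$ of weight $n$; since the weight-$n$ part of $\Lie V$ is spanned by brackets $[\ell_1,\ell_2]$ with $\ell_1,\ell_2$ of strictly smaller weight, I may take $\ell=[\ell_1,\ell_2]$. Write $]\phi[\ =\sum_i\alpha_i\otimes\beta_i$ with each $\alpha_i,\beta_i$ homogeneous; because cutting an edge of a rooted tree with $n$ vertices yields two rooted trees whose vertex counts are positive and sum to $n$, every $\alpha_i$ and $\beta_i$ has weight $<n$. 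Since $i_G$ is a coalgebra homomorphism for the cobracket structures (noted just before the proposition) and is weight-preserving, $\ ]i_G(\phi)[\ =(i_G\otimes i_G)\bigl(\,]\phi[\,\bigr)=\sum_i i_G(\alpha_i)\otimes i_G(\beta_i)$. Applying Proposition~\ref{P:graph compat}, then the inductive hypothesis, then Proposition~\ref{P:prelie compat}:
\[
\bigl\langle i_G(\phi),\,[\ell_1,\ell_2]\bigr\rangle
=\sum_i\bigl\langle i_G(\alpha_i),\,\ell_1\bigr\rangle\,\bigl\langle i_G(\beta_i),\,\ell_2\bigr\rangle
=\sum_i\bigl\langle \alpha_i,\,\ell_1\bigr\rangle\,\bigl\langle \beta_i,\,\ell_2\bigr\rangle
=\bigl\langle \phi,\,[\ell_1,\ell_2]\bigr\rangle .
\]
Thus $\eta_G\bigl(i_G(\phi)\bigr)=\eta_p(\phi)$ for every homogeneous $\phi$.

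Finally, I would upgrade from homogeneous $\phi$ to arbitrary $\phi\in(\R V)^*$: pairing $\phi$ (or $i_G(\phi)$) with a homogeneous $\ell$ of weight $n$ only sees the weight-$n$ component of $\phi$, and $i_G$ respects the weight grading, so the homogeneous case gives the general case. I do not anticipate a real obstacle here; the only points needing a moment's care are that the cobracket $]\phi[$ lands in strictly lower weights (making the induction well-founded) and that $i_G$ is simultaneously weight-preserving and a map of cobracket coalgebras — both immediate from the explicit description of $i_G$ (orient each edge of a rooted tree away from the root) and of the cobracket.
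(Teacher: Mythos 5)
Your proof is correct and follows exactly the route the paper intends: the paper omits the argument as being identical to its proof of Proposition~\ref{P:factor eta_A}, namely the trivial check on $V^*$ followed by strong induction using the compatibility statements (here Propositions~\ref{P:graph compat} and \ref{P:prelie compat}) together with the fact that $i_G$ is a map of cobracket coalgebras, which is precisely what you do. Your closing remarks on weight bookkeeping are fine and add only routine detail beyond the paper's version.
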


\begin{remark}
	Proposition~\ref{P:factor eta_p} is dual to the statement $p_p = q_G \circ p_G$.
\end{remark}

\begin{corollary}\label{C:factor ker eta_p}
	Let $\phi \in (\R V)^*$.  
	Then $\phi \in \ker(\eta_p)$ if and only if $i_p(\phi) \in \ker(\eta_G)$.
\end{corollary}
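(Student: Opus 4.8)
The plan is to read off the corollary directly from the factorization $\eta_p = \eta_G \circ i_G$ established in Proposition~\ref{P:factor eta_p}. (The map written $i_p$ in the statement must be read as the inclusion $i_G:(\R V)^*\hookrightarrow(\G V)^*$ of Definition~\ref{D:i_G}, since $i_p$ as defined in Definition~\ref{D:i_p} has domain $(\T V)^*$; $i_G$ is the only map with the source and target demanded by the statement.) Given $\phi\in(\R V)^*$, Proposition~\ref{P:factor eta_p} gives the equality of elements $\eta_p(\phi) = \eta_G\bigl(i_G(\phi)\bigr)$ in $(\Lie V)^*$. Therefore $\eta_p(\phi)=0$ holds if and only if $\eta_G\bigl(i_G(\phi)\bigr)=0$, which is precisely the assertion that $\phi\in\ker(\eta_p)$ if and only if $i_G(\phi)\in\ker(\eta_G)$. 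So the proof is a single line of kernel-chasing.

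There is no genuine obstacle here: the statement is a purely formal consequence of a commuting triangle of vector-space maps, and in particular it does \emph{not} use injectivity of $i_G$ (although $i_G$ is injective, being the dual of the surjection $q_G:\G V\twoheadrightarrow\R V$). This parallels exactly how Corollary~\ref{C:factor eta_A} is deduced from Proposition~\ref{P:factor eta_A}. The substantive input is Proposition~\ref{P:factor eta_p} itself, which is the dual of the relation $p_p = q_G\circ p_G$ among universal enveloping maps; thus the corollary is ultimately powered by the compatibility of the several "universal enveloping" maps out of $\Lie V$ with the quotient maps between graph, preLie, and associative algebras, and that compatibility is what actually required an argument, back at Proposition~\ref{P:factor eta_p}. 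In the write-up I would simply cite Proposition~\ref{P:factor eta_p} and note the equality $\eta_p(\phi)=\eta_G\bigl(i_G(\phi)\bigr)$, exactly as in the proof of Corollary~\ref{C:factor eta_A}.
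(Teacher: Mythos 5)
Your proof is correct and is exactly the argument the paper intends: the corollary is read off immediately from the factorization $\eta_p = \eta_G \circ i_G$ of Proposition~\ref{P:factor eta_p}, with no need for injectivity of $i_G$, just as Corollary~\ref{C:factor eta_A} follows from Proposition~\ref{P:factor eta_A}. You are also right that the ``$i_p(\phi)$'' in the statement is a typo for $i_G(\phi)$, since $i_p$ is defined on $(\T V)^*$.
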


\begin{proof}[Proof of Proposition~\ref{P:prelie ker =} assuming \ref{P:graphs ker =}]
	Work identical to the corresponding proof in Section~\ref{S:prelie}.
\end{proof}

Proposition~\ref{P:prelie ker} follows from Proposition~\ref{P:graph ker} using 
$i_G$ similarly.  Combining Propositions~\ref{P:factor eta_p} and \ref{P:factor eta_A},
we have the following corollaries.

\begin{corollary}\label{C:factor all}
	$\eta_A = \eta_G \circ i_G \circ i_p$.
\end{corollary}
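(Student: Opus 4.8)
The plan is to obtain Corollary~\ref{C:factor all} purely by composing the two factorizations already in hand. Proposition~\ref{P:factor eta_A} gives $\eta_A = \eta_p \circ i_p$, and Proposition~\ref{P:factor eta_p} gives $\eta_p = \eta_G \circ i_G$. Substituting the second identity into the first yields
$$\eta_A = \eta_p \circ i_p = (\eta_G \circ i_G) \circ i_p = \eta_G \circ i_G \circ i_p,$$
which is the assertion. No new computation is required; the corollary is literally the transitive composite of the previous two propositions.

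If one prefers to see it on the algebra side, I would instead combine the dual statements. The remark after Proposition~\ref{P:factor eta_A} records $p_A = q_p \circ p_p$, and the remark after Proposition~\ref{P:factor eta_p} records $p_p = q_G \circ p_G$, so $p_A = q_p \circ q_G \circ p_G$ as maps $\Lie V \to \T V$. Dualizing this composite of vector space maps and using that $\eta_A$, $\eta_p$, $\eta_G$ are the duals of $p_A$, $p_p$, $p_G$ while $i_p$, $i_G$ are the duals of $q_p$, $q_G$, we again get $\eta_A = \eta_G \circ i_G \circ i_p$.

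\textbf{Main obstacle.} There is essentially none: this is bookkeeping. The only point worth a sentence of care is that the equality should be read as an equality of coalgebra maps, but this is automatic from what precedes — $i_p$ and $i_G$ are coalgebra homomorphisms for the cobracket structures (noted just before Propositions~\ref{P:factor eta_A} and \ref{P:factor eta_p}), and $\eta_G$ is a coalgebra surjection by Corollary~\ref{C:graph surj} — so the composite is a coalgebra map and no further verification is needed.
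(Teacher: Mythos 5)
Your argument is correct and is exactly the paper's: the corollary is stated there as an immediate consequence of combining Proposition~\ref{P:factor eta_A} ($\eta_A=\eta_p\circ i_p$) with Proposition~\ref{P:factor eta_p} ($\eta_p=\eta_G\circ i_G$), which is precisely your substitution. Your dual-side remark via $p_A=q_p\circ q_G\circ p_G$ matches the remarks in the paper as well, so nothing further is needed.
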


\begin{corollary}\label{C:factor ker eta_A}
	Let $\psi \in (\T V)^*$.  
	Then $\psi \in \ker(\eta_A)$ if and only if $(i_G \circ i_p)(\psi) \in \ker(\eta_G)$.
\end{corollary}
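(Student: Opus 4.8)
The plan is to deduce this directly from the factorization $\eta_A = \eta_G \circ i_G \circ i_p$ of Corollary~\ref{C:factor all}, which is itself obtained by composing $\eta_A = \eta_p \circ i_p$ (Proposition~\ref{P:factor eta_A}) with $\eta_p = \eta_G \circ i_G$ (Proposition~\ref{P:factor eta_p}). Once that composition formula is in hand the argument is purely formal: for $\psi \in (\T V)^*$ one has $\eta_A(\psi) = \eta_G\bigl((i_G \circ i_p)(\psi)\bigr)$, so $\eta_A(\psi)$ vanishes exactly when $(i_G \circ i_p)(\psi)$ lies in $\ker(\eta_G)$, which is the claim. No use is made of the coalgebra structures or of the injectivity of any of the maps involved; only the equality of the vector-space maps matters.

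An equivalent route, which I would record as an alternative, is to chain the two one-step statements already established: Corollary~\ref{C:factor eta_A} gives $\psi \in \ker(\eta_A) \Longleftrightarrow i_p(\psi) \in \ker(\eta_p)$, and Corollary~\ref{C:factor ker eta_p} gives $i_p(\psi) \in \ker(\eta_p) \Longleftrightarrow i_G\bigl(i_p(\psi)\bigr) \in \ker(\eta_G)$; composing these two equivalences yields $\psi \in \ker(\eta_A) \Longleftrightarrow (i_G \circ i_p)(\psi) \in \ker(\eta_G)$.

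There is no genuine obstacle here: the statement is a bookkeeping corollary of the factorization diagram, and the only thing to watch is keeping the two injections $i_p : (\T V)^* \hookrightarrow (\R V)^*$ and $i_G : (\R V)^* \hookrightarrow (\G V)^*$ straight and composing them in the correct order. I would present the proof in a single sentence invoking Corollary~\ref{C:factor all} (or, alternatively, Corollaries~\ref{C:factor eta_A} and~\ref{C:factor ker eta_p}).
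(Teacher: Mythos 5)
Your argument is correct and is exactly what the paper does: the paper derives this corollary (together with Corollary~\ref{C:factor all}) formally by combining Propositions~\ref{P:factor eta_A} and~\ref{P:factor eta_p}, and as you note, no injectivity or coalgebra structure is needed — the factorization $\eta_A = \eta_G \circ i_G \circ i_p$ alone gives the equivalence.
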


	Applying Corollary~\ref{C:factor ker eta_A} and Proposition~\ref{P:graph ker}, 
    we may use arrow-reversing and Arnold to construct expressions in 
    $\bigl\langle\ker(\,]\cdot[\,_A)\bigr\rangle$ rather than the  
	shuffle generators.  

\begin{example}\label{E:reverse}
	$abcde - edcba \in \bigl\langle\ker(\,]\cdot[\,_A)\bigr\rangle$ because
	\[
    (i_G\circ i_p)(abcde - edcba) \ = \ 
	\begin{aligned}\begin{xy}
		(0,0)*[o]+=<9pt>{\scriptstyle a}*\frm{o}="1";
		(6,3)*[o]+=<9pt>{\scriptstyle b}*\frm{o}="2" **\dir{-}?>*\dir{>},
		"2";(12,0)*[o]+=<9pt>{\scriptstyle c}*\frm{o}="3" **\dir{-}?>*\dir{>},
		"3";(18,3)*[o]+=<9pt>{\scriptstyle d}*\frm{o}="4" **\dir{-}?>*\dir{>},
		"4";(24,0)*[o]+=<9pt>{\scriptstyle e}*\frm{o} **\dir{-}?>*\dir{>};
	\end{xy}\end{aligned}
	\ - \ 
	\begin{aligned}\begin{xy}
		(0,0)*[o]+=<9pt>{\scriptstyle a}*\frm{o}="1";
		(6,3)*[o]+=<9pt>{\scriptstyle b}*\frm{o}="2" **\dir{-}?<*\dir{<},
		"2";(12,0)*[o]+=<9pt>{\scriptstyle c}*\frm{o}="3" **\dir{-}?<*\dir{<},
		"3";(18,3)*[o]+=<9pt>{\scriptstyle d}*\frm{o}="4" **\dir{-}?<*\dir{<},
		"4";(24,0)*[o]+=<9pt>{\scriptstyle e}*\frm{o} **\dir{-}?<*\dir{<};
	\end{xy}\end{aligned}
	\]	
	is just four applications of arrow-reversing.
    
   To construct the cyclic permutations such as 
   $abcd + bcda + cdab + dabc$ in \mbox{$\bigl\langle\ker(\,]\cdot[\,_A)\bigr\rangle$} 
   we introduce
   some shorthand.  
   Given a word $a_1\cdots a_{i-1}b\,a_{i+1}\cdots a_n$, write 
   $a_1 \cdots a_{i-1}\upx{c}{b}{.2}\,a_{i+1}\cdots a_n$ for the long graph
   $a_1\cdots a_{i-1}b\,a_{i+1}\cdots a_n$ with an extra
   arrow connecting $b$ to $c$ (``attaching the letter $c$ after $b$'').  
   In this shorthand,
   modulo the local Arnold identity for graphs, we have  
   \begin{equation}\label{E:insert letter} 
    a_1\!\cdots \upx{c}{b}{.3}\,d\cdots a_n \ - \ 
    a_1\!\cdots b\,\upx{c}{d}{.3}\cdots a_n
            \ \ \sim \ \
    a_1\!\cdots b\,c\,d \cdots a_n.
   \end{equation} 
   The Arnold identity itself gives the first cyclic permutation
   $abc + bca + cab$. 
   We get the next larger cyclic permutation by
   taking the difference of attaching $d$ after $c$ and attaching $d$ after $a$,
   applying (\ref{E:insert letter}) to place $d$ between $c$ and $a$.
   \[\begin{aligned}
   &\left(ab\upx{d}{c}{-.1} \ + \ b\upx{d}{c}{-.1}a \ + \ \upx{d}{c}{-.1}ab\right)
                                     \ - \ 
     \left( \upx{d}{a}{-.1}bc \ + \ bc\upx{d}{a}{-.1} \ + \  c\upx{d}{a}{-.1}b  \right)  \\
   &\qquad =  \quad ab\upx{d}{c}{-.2} 
                    \ + \ \left(b\upx{d}{c}{-.2}a \ - \ bc\upx{d}{a}{-.2}\right)
                    \ + \ \left(\upx{d}{c}{-.2}ab \ - \ c\upx{d}{a}{-.2}b\right)
                    \ - \ \upx{d}{a}{-.2}bc \\
   &\qquad \sim  \quad ab\upx{d}{c}{-.1} 
                    \ + \ \left(bcda\right)
                    \ + \ \left(cdab\right)
                    \ - \ \upx{d}{a}{-.1}bc \\
   \end{aligned}\]
   Note that $ab\upx{d}{c}{-.2} = abcd$ 
   and $-\;\upx{d}{a}{-.2}bc \sim dabc$ (reversing the arrow from $a$ to $d$). 
   
   Larger cyclic permutation expressions are constructed similarly.
\end{example}

\begin{example}
	We can also construct the shuffle generators in 
    \mbox{$\bigl\langle\ker(\,]\cdot[\,_A)\bigr\rangle$} via 
	Corollary~\ref{C:factor ker eta_A} and Proposition~\ref{P:graph ker}
	in the following manner.  Begin with the arrow-reversing expression
	\begin{equation}\label{E:shuffle1}
	\begin{aligned}\begin{xy}
		(0,0)*[o]+=<10pt>{\scriptstyle a_1}*\frm{o}="a1",
		"a1";(5.5,-4.5)*[o]+=<10pt>{\scriptstyle a_2}*\frm{o}="a2" **\dir{-}?>*\dir{>},
		"a2";(11,-9)*[o]+=<10pt>{\scriptstyle a_3}*\frm{o}="a3" **\dir{-}?>*\dir{>},
		"a3";(16.5,-13.5)*{\scriptstyle \ddots} **\dir{-}?>*\dir{>},
		"a1";(10,0)*[o]+=<10pt>{\scriptstyle b_1}*\frm{o}="b1" **\dir{-}?>*\dir{>},
		"b1";(15.5,-4.5)*[o]+=<10pt>{\scriptstyle b_2}*\frm{o}="b2" **\dir{-}?>*\dir{>},
		"b2";(21,-9)*[o]+=<10pt>{\scriptstyle b_3}*\frm{o}="b3" **\dir{-}?>*\dir{>},
		"b3";(26.5,-13.5)*{\scriptstyle \ddots} **\dir{-}?>*\dir{>},
	\end{xy}\end{aligned}
	 +  
	\begin{aligned}\begin{xy}
		(0,0)*[o]+=<10pt>{\scriptstyle b_1}*\frm{o}="a1",
		"a1";(5.5,-4.5)*[o]+=<10pt>{\scriptstyle b_2}*\frm{o}="a2" **\dir{-}?>*\dir{>},
		"a2";(11,-9)*[o]+=<10pt>{\scriptstyle b_3}*\frm{o}="a3" **\dir{-}?>*\dir{>},
		"a3";(16.5,-13.5)*{\scriptstyle \ddots} **\dir{-}?>*\dir{>},
		"a1";(10,0)*[o]+=<10pt>{\scriptstyle a_1}*\frm{o}="b1" **\dir{-}?>*\dir{>},
		"b1";(15.5,-4.5)*[o]+=<10pt>{\scriptstyle a_2}*\frm{o}="b2" **\dir{-}?>*\dir{>},
		"b2";(21,-9)*[o]+=<10pt>{\scriptstyle a_3}*\frm{o}="b3" **\dir{-}?>*\dir{>},
		"b3";(26.5,-13.5)*{\scriptstyle \ddots} **\dir{-}?>*\dir{>},
	\end{xy}\end{aligned}
	\end{equation}
	Note that the Arnold identity
	implies the following.
	\begin{equation}\label{E:shuffle2}
	\begin{aligned}\begin{xy}
		(0,0)*[o]+=<10pt>{\scriptstyle a_1}*\frm{o}="a1",
		"a1";(5.5,-4.5)*[o]+=<10pt>{\scriptstyle a_2}*\frm{o}="a2" **\dir{-}?>*\dir{>},
		"a2";(11,-9)*[o]+=<10pt>{\scriptstyle a_3}*\frm{o}="a3" **\dir{-}?>*\dir{>},
		"a3";(16.5,-13.5)*{\scriptstyle \ddots} **\dir{-}?>*\dir{>},
		"a1";(10,0)*[o]+=<10pt>{\scriptstyle b_1}*\frm{o}="b1" **\dir{-}?>*\dir{>},
		"b1";(15.5,-4.5)*[o]+=<10pt>{\scriptstyle b_2}*\frm{o}="b2" **\dir{-}?>*\dir{>},
		"b2";(21,-9)*[o]+=<10pt>{\scriptstyle b_3}*\frm{o}="b3" **\dir{-}?>*\dir{>},
		"b3";(26.5,-13.5)*{\scriptstyle \ddots} **\dir{-}?>*\dir{>},
	\end{xy}\end{aligned}
	 \sim \ \ -  
	\begin{aligned}\begin{xy}
		(-9,2)*[o]+=<10pt>{\scriptstyle a_1}*\frm{o}="a0",
		"a0";(0,0)*[o]+=<10pt>{\scriptstyle a_2}*\frm{o}="a1" **\dir{-}?<*\dir{<},
		"a1";(5.5,-4.5)*[o]+=<10pt>{\scriptstyle a_3}*\frm{o}="a2" **\dir{-}?>*\dir{>},
		"a2";(11,-9)*{\ddots} **\dir{-}?>*\dir{>},
		"a1";(10,0)*[o]+=<10pt>{\scriptstyle b_1}*\frm{o}="b1" **\dir{-}?>*\dir{>},
		"b1";(15.5,-4.5)*[o]+=<10pt>{\scriptstyle b_2}*\frm{o}="b2" **\dir{-}?>*\dir{>},
		"b2";(21,-9)*[o]+=<10pt>{\scriptstyle b_3}*\frm{o}="b3" **\dir{-}?>*\dir{>},
		"b3";(26.5,-13.5)*{\scriptstyle \ddots} **\dir{-}?>*\dir{>},
	\end{xy}\end{aligned}
	 -  
	\begin{aligned}\begin{xy}
		(-9,2)*[o]+=<10pt>{\scriptstyle a_1}*\frm{o}="a0",
		"a0";(0,0)*[o]+=<10pt>{\scriptstyle b_1}*\frm{o}="a1" **\dir{-}?<*\dir{<},
		"a1";(5.5,-4.5)*[o]+=<10pt>{\scriptstyle a_2}*\frm{o}="a2" **\dir{-}?>*\dir{>},
		"a2";(11,-9)*[o]+=<10pt>{\scriptstyle a_3}*\frm{o}="a3" **\dir{-}?>*\dir{>},
		"a3";(16.5,-13.5)*{\scriptstyle \ddots} **\dir{-}?>*\dir{>},
		"a1";(10,0)*[o]+=<10pt>{\scriptstyle b_2}*\frm{o}="b1" **\dir{-}?>*\dir{>},
		"b1";(15.5,-4.5)*[o]+=<10pt>{\scriptstyle b_3}*\frm{o}="b2" **\dir{-}?>*\dir{>},
		"b2";(21,-9)*{\scriptstyle \ddots} **\dir{-}?>*\dir{>},
	\end{xy}\end{aligned}
	\end{equation}
	Reversing the arrows to 
	$\begin{aligned}\begin{xy}(0,0)*[o]+=<9pt>{\scriptstyle a_1}*\frm{o}\end{xy}\end{aligned}$
	on the right-hand side of (\ref{E:shuffle2}) changes each sign.   
	Iterating (\ref{E:shuffle2}) beginning with the first
	term of (\ref{E:shuffle1}) yields all shuffles of $(a_1a_2\cdots)$ into $(b_1b_2\cdots)$
	with first letter $a_1$.  Iterating (\ref{E:shuffle2}) beginning with the second
	term of (\ref{E:shuffle1}) yields all shuffles with first letter $b_1$.
\end{example}

    In a sequel we work similar to above applying 
	Corollary~\ref{C:factor ker eta_A} and Proposition~\ref{P:graph ker}
    to compute
	new bases for free Lie algebras via a series of 
	combinatorial moves, analyzing and comparing computability and utility of results.

%
%

\subsection{Combinatorial associative configuration pairing}

Pulling back the graph configuration pairing from $(\G V)^*$ to $(\T V)^*$ 
using Definition~\ref{D:sigma graph conf} and Remark~\ref{R:sigma-conf alt def}  
yields an alternate 
method for computing the configuration pairing between associative coalgebras and Lie algebras.

\begin{definition}
Given a length $n$ word $\omega=(a_1a_2\cdots a_n) \in \T^n V$, 
a homogeneous length $n$ Lie bracket 
$\ell \in \Lie^n V$ bracketing the letters $b_1,\dots,b_n$, 
and a permutation $\sigma\in S_n$, write $\sigma$ also for the induced 
map $\sigma : a_i \mapsto b_{\sigma(i)}$ from letters of $\omega$ to letters of $\ell$.
The $\sigma$-configuration
pairing of $\omega^*$ and $\ell$ is given by the following.
\begin{equation}
\bigl\langle \omega^*,\, \ell \bigr\rangle_\sigma \ = \ \prod_i a_i^*(b_{\sigma(i)}) \cdot
 \begin{cases} 
        \mathrm{sgn}(\sigma)  &
           \parbox[t]{.45\textwidth}{if every subbracket $h$ of $\ell$ has  
                                    $\sigma^{-1}(h)$ a connected subword of $\omega$,} \\
               0 & \text{otherwise}
 \end{cases}
\end{equation}
where $\mathrm{sgn}(\sigma) = \pm 1$ is the usual sign of the permutation $\sigma\in S_n$.
\end{definition}

\begin{proposition}\label{P:ass combinatorial def}
  $\displaystyle \bigl\langle \omega^*,\, \ell\bigr\rangle = 
   \sum_{\sigma \in S_n} \bigl\langle \omega^*,\, \ell\bigr\rangle_\sigma$.
\end{proposition}

In contrast to the earlier Proposition~\ref{P:ass compat} used in Example~\ref{E:comp}
computing configuration pairings recursively from the outermost bracket of $\ell$ working inwards,
Proposition~\ref{P:ass combinatorial def} may be used to compute configuration pairings
inductively from the innermost brackets of $\ell$ working outwards.

\begin{example}\label{E:ass combinatorial def}
We may apply Proposition~\ref{P:ass combinatorial def} to compute configuration pairings as below.
\begin{itemize}
\item
 $\bigl\langle abcb^*,\ [[[a,c],b],b]\bigr\rangle = 0$ because the Lie bracket has subbracket
 $[a,c]$ but the letters $a$ and $c$ are not adjacent in the word $abcb$.
\item
 $\bigl\langle abbba^*,\ [[[b,a],b],[a,b]]\bigr\rangle = 2$ because the two inner subbrackets
  $[b,a]$ and $[a,b]$ must correspond under $\sigma$ to either $ab$ or $ba$ at the start or
  end of the word.  If $[b,a]$ corresponds to $ab$, then the permutation $\sigma$ is
  $(12)(3)(45)$.  If $[b,a]$ corresponds to $ba$, then the permutation $\sigma$ is
  $(14)(3)(25)$.  Both of these have $\mathrm{sgn}(\sigma) = 1$.
\end{itemize}
\end{example}

\appendix

%
%

\section{Operad Structures}\label{A:operad}

Operads are objects which encode algebra structures.  On the set-level, they consist
of an element for every possible way of combining things using the algebra structure, 
along with ``composition'' maps expressing some combinations as compositions of others.
More formally, a 
(unital, symmetric) operad $\Op$ in the symmetric monoidal category of $k$-vector 
spaces is a symmetric sequence of vector spaces, $\{\Op(n)\}_{n\ge 0}$ where 
each $\Op(n)$ has $\Sigma_n$-action, as well as 
a unit $k \to \Op(0)$ and
and equivariant composition maps,
$\Op(n)\bigotimes \Op(k_1)\otimes\cdots\otimes \Op(k_n) \mapsto 
 \Op(\sum_i k_i)$, satisfying standard unital and associativity axioms. 
The composition map tells which $(\sum_i k_i)$-ary operation is given by
combining $k_1$,\dots,$k_n$-ary operations together via an $n$-ary operation. 
The symmetric group action accounts for plugging elements into an $n$-ary operation 
in different orders.
Below we use $\circ_i$ operations to define operad structure.  These are maps
$\Op(n)\otimes \Op(m) \mapsto \Op(m+n-1)$ which plug an $m$-ary operation into an
$n$-ary operation at position $i$.

\begin{itemize}
\item
$\Aop$. The associative operad is given by $\Aop(n)\cong k[\Sigma_n]$ the regular
representations of the symmetric groups.  Composition is given by wreath 
product.

\item 
$\Lop$. The Lie operad has $\Lop(n)$ given by the $k$-vector space generated
by formal length $n$ bracket expressions of the elements $a_1,\dots,a_n$.  This is  
isomorphic to the $k$-vector space of rooted binary planar trees whose  
leaf set is $[n] = \{1,\dots,n\}$ (with $\Sigma_n$ permuting $[n]$)
modulo anti-symmetry and Jacobi identites of trees.

\item
$\Rop$. \cite{ChLi00}  The preLie operad is isomorphic to the operad of
rooted trees. 
$\Rop(n)$ is the $k$-vector space of rooted trees with vertex set  
$[n]$.
The operad structure of $\Rop(n)$ comes from the following $\circ_i$ operation.
Direct the edges of a rooted tree to point away from the root.
$R \circ_i T$ is given by replacing vertex $i$ of $R$ with the tree $T$.  The
incoming edge to $i$ (if $i$ is not the root of $R$) connects to the root of $T$, 
and we sum over all ways that the outgoing edges of $i$ can be assigned source
vertices from $T$.

\item
$\Gop$. The graph operad has $\Gop(n)$ given by the $k$-vector space of directed, acyclic graphs 
with vertex set $[n]$.
The operad structure on $\Gop$ comes from the 
following $\circ_i$ operation (generalizing that of $\Rop$).
$G\circ_i H$ is given by replacing vertex $i$ of $G$ by the graph $H$, summing over all
ways that edges in $G$ with source or target vertex $i$ can be assigned a new source or target 
vertex in $H$.

\end{itemize}

\begin{remark}
	Write $\Op^\vee$ for the arity-wise dual of $\Op$:  i.e. $\Op^\vee(n) = \Op(n)^*$.  If 
	$\Op$ is an (arity-wise finitely generated) operad, then $\Op^\vee$ is a cooperad.
	The dual cooperad structure of $\Rop^\vee$ acts by quotienting subtrees to vertices.
	The dual cooperad structure of $\Gop^\vee$ acts by quotienting subgraphs to vertices,
	as described in \cite{SiWa07}.
\end{remark}


\begin{proposition}
	$\Gop$ is not a quadratic operad \cite{GiKa94}.
\end{proposition}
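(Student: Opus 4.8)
The plan is to prove the stronger fact that $\Gop$ is not even generated by its arity‑two component, from which non‑quadraticity follows immediately. Recall the Ginzburg--Kapranov definition: a quadratic operad is a quotient $\mathcal{F}(E)/(R)$ of the free operad on a symmetric sequence $E$ concentrated in arity $2$ by the operadic ideal generated by a subspace $R\subseteq\mathcal{F}(E)(3)$. For such $P$ one has $P(2)=\mathcal{F}(E)(2)=E$, and the canonical morphism of operads $\mathcal{F}\bigl(P(2)\bigr)\twoheadrightarrow P$ is onto. So it suffices to exhibit an element of $\Gop(3)$ lying outside the image of the canonical map $\mathcal{F}\bigl(\Gop(2)\bigr)\to\Gop$, and for this I would look in arity $3$.

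First I would identify $\Gop(2)$: a directed, acyclic, connected graph on $\{1,2\}$ has exactly one edge, so $\Gop(2)$ is two‑dimensional, spanned by $1\to 2$ and $2\to 1$, and in particular every element of $\Gop(2)$ is a combination of one‑edge graphs. Next I would record that the operadic composition $\circ_i$ on $\Gop$ preserves edge count additively: in each surviving summand of $G\circ_i H$ one keeps every edge of $G$ and of $H$, merely reassigning within $H$ the endpoints of those edges of $G$ meeting the vertex $i$, so $|\mathrm{E}(G\circ_i H)|=|\mathrm{E}(G)|+|\mathrm{E}(H)|$. Since $\mathcal{F}\bigl(\Gop(2)\bigr)(n)$ is spanned by binary trees with $n$ leaves, equivalently with $n-1$ internal vertices, each decorated by an element of $\Gop(2)$, iterating the previous remark shows that the image of $\mathcal{F}\bigl(\Gop(2)\bigr)(n)\to\Gop(n)$ is contained in the span of the graphs on $[n]$ with exactly $n-1$ edges.

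Finally I would produce the witness. The graph $G_0$ on $[3]=\{1,2,3\}$ with edges $1\to 2$, $2\to 3$, $1\to 3$ is directed, acyclic, and connected, hence a basis element of $\Gop(3)$, and it has three edges; since distinct graphs are linearly independent (they form a basis of $\G V$, and hence of $\Gop$), $G_0$ cannot be written as a linear combination of two‑edge graphs. Therefore $G_0$ is not in the image of $\mathcal{F}\bigl(\Gop(2)\bigr)(3)\to\Gop(3)$, that map is not surjective, $\Gop$ is not generated in arity $2$, and so $\Gop$ is not quadratic. I expect the only step needing genuine care to be the second one: verifying the edge‑count additivity of $\circ_i$ together with the fact that an arity‑$n$ element of the free operad on a single binary generator is assembled from exactly $n-1$ copies of that generator. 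This is, however, precisely the operad‑level form of the observation that $\G V$ carries higher products not generated by its binary product, so no essentially new argument is required.
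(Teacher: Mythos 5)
Your proposal is correct, and it reaches the conclusion by a genuinely different route than the paper. The paper argues by counting ranks of $k[\Sigma_n]$-modules: $\Gop(2)$ is free of rank $1$ over $k[\Sigma_2]$, so for any quadratic operad $\Op$ with $\Op(2)$ of rank $1$ the arity-three part is a quotient of the free operad's arity-three part, which is free of rank $2$ over $k[\Sigma_3]$; since $\Gop(3)$ requires more than two module generators, $\Gop$ cannot be quadratic. You instead prove the stronger statement that $\Gop$ is not even generated by its binary part, via an explicit invariant: the operadic compositions $\circ_i$ add edge counts, the symmetric group actions preserve them, and $\Gop(2)$ is spanned by one-edge graphs, so every composite of binary operations in arity $n$ lies in the span of graphs with exactly $n-1$ edges; the complete acyclic triangle on $[3]$, having three edges, is a basis element outside that span. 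Your edge-additivity claim and the surjectivity of $\mathcal{F}\bigl(P(2)\bigr)\to P$ for a quadratic $P$ (relations sit in arity $\ge 3$, so $P(2)$ is the generating space) are both sound, so the argument is complete. What each approach buys: the paper's rank count is very short and detects the failure purely numerically, without having to name a missing operation; your argument avoids any representation-theoretic bookkeeping, is insensitive to the characteristic of $k$ and to the precise $\Sigma_3$-module structure of $\Gop(3)$, and exhibits a concrete witness --- indeed the three-edge triangles are exactly the graphs responsible for $\Gop(3)$ being too large, and your witness is precisely one of the ``higher products'' not generated by the binary product that the paper's appendix remark alludes to. The only point deserving care, as you note, is the edge-count additivity of $\circ_i$, which is immediate from the definition (each summand of $G\circ_i H$ retains all edges of $G$ and of $H$, only reattaching those incident to the vertex $i$).
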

\begin{proof}
	Count ranks as $k[\Sigma_n]$-modules.
	$\Gop(2)$ has rank 1 as a $k[\Sigma_2]$-module. 
	$\Gop(3)$ is spanned as a $k[\Sigma_3]$-module by
	$\begin{xy}                           
		(0,-2)*[o]+=<7pt>{\scriptstyle\ }*\frm{o}="a",    
		(3,3)*[o]+=<7pt>{\scriptstyle\ }*\frm{o}="b",   
		(6,-2)*[o]+=<7pt>{\scriptstyle\ }*\frm{o}="c",   
		"a";"b"**\dir{-}?>*\dir{>},         
		"b";"c"**\dir{-}?>*\dir{>},         
	\end{xy}$,
	$\begin{xy}                           
		(0,-2)*[o]+=<7pt>{\scriptstyle\ }*\frm{o}="a",    
		(3,3)*[o]+=<7pt>{\scriptstyle\ }*\frm{o}="b",   
		(6,-2)*[o]+=<7pt>{\scriptstyle\ }*\frm{o}="c",   
		"b";"a"**\dir{-}?>*\dir{>},         
		"b";"c"**\dir{-}?>*\dir{>},         
	\end{xy}$, and
	$\begin{xy}                           
		(0,-2)*[o]+=<7pt>{\scriptstyle\ }*\frm{o}="a",    
		(3,3)*[o]+=<7pt>{\scriptstyle\ }*\frm{o}="b",   
		(6,-2)*[o]+=<7pt>{\scriptstyle\ }*\frm{o}="c",   
		"a";"b"**\dir{-}?>*\dir{>},         
		"c";"b"**\dir{-}?>*\dir{>},         
	\end{xy}$.
	However, a quadratic operad 
	with $\Op(2)$ of rank 1 cannot have $\Op(3)$ of rank $>2$.
\end{proof}

%

There are quotient maps of operads
$\Gop \twoheadrightarrow \Rop \twoheadrightarrow \Aop$ defined as follows.
\begin{itemize}
\item
The map $Q_p:\Rop \twoheadrightarrow \Aop$ is induced by the functor which 
views an associative algebra as a preLie algebra.  
$Q_p$ takes rooted trees which are bivalent at all but
two vertices to the permutation encoded by the
vertices from the
root to the leaf, and quotients all trees containing a vertex of valency $>2$.

\item
The map $Q_G:\Gop \twoheadrightarrow \Rop$ is induced by the functor which
views a preLie algebra as a graph algebra (by viewing an operation encoded
by a rooted tree as an operation encoded by a rooted 
directed graph).
$Q_G$ takes rooted graphs to rooted trees, and quotients
non-rooted graphs.  
[The interested reader may check that this commutes with 
$\circ_i$ operations.]
\end{itemize}

There are inclusion maps of operads
$\Lop \rightarrowtail \Aop$, $\Lop \rightarrowtail \Rop$, $\Lop \rightarrowtail \Gop$
defined as follows.

\begin{itemize}
\item
The map $U_A:\Lop \rightarrowtail \Aop$ is induced by the map viewing an associative algebra
as a Lie algebra with bracket $[x,y] = xy-yx$.  This map is an injection by Poincar\'e-Birkhoff-Witt.
The operad map $U_A$ induces adjoint maps.
\begin{itemize}
\item
$F_A:(\Aop\mathrm{-\emph{algebras}}) \to (\Lop\mathrm{-\emph{algebras}})$ 
forgetting associative product structure down to Lie algebra
structure ($[x,y] = xy - yx$).
\item
$U_A:(\Lop\mathrm{-\emph{algebras}}) \to (\Aop\mathrm{-\emph{algebras}})$ 
sending a Lie algebra to its enveloping algebra. 
\end{itemize}
The unit of this adjuction is $L \mapsto F_A U_A L$ sending a Lie algebra to its
universal enveloping algebra with anti-commutative product.  Elementwise, the induced
map is $p_A$ sending a Lie bracket to its Lie polynomial in the universal enveloping algebra. 

\item
The map $U_p:\Lop \rightarrowtail \Rop$ is induced by the map viewing a preLie algebra as
a Lie algebra with bracket $[x,y] = x\triangleleft y - y\triangleleft x$.
From definitions, it follows that $U_A = Q_p U_p$.  Since $U_A$ is an injection, so is $U_P$. 
The injection $p_p$ is induced by the unit of the adjunction associated to the operad injection 
$U_p$ as before. 

\item
The map $U_G:\Lop \rightarrowtail \Gop$ is induced by the map viewing a graph algebra as a 
Lie algebra with bracket 
	$[x,y] = \clinep{x}{y} - \clinem{x}{y}$.
From definitions, it follows that $U_A = Q_p Q_G U_G$.  Since $U_A$ is an injection, so is $U_G$. 
The injection $p_G$ is induced by the unit of the adjunction associated to the operad injection 
$U_G$ as before. 
\end{itemize}

%
%

\section{Basic Coalgebra Coideal Facts}\label{A:coalg}

A noncounital coalgebra is $(C, \Delta)$ where $C$ is a vector space with coproduct 
$\Delta:C\to C\otimes C$.  
A homomorphism of coalgebras $f:(C,\Delta_C) \to (D,\Delta_D)$ is a map $f:C \to D$ such that
$(f\otimes f)\Delta_C = \Delta_D(f)$.
A coideal of a noncounital coalgebra is 
$I\subset C$ with $\Delta(I) \subset (I\otimes C)\, \bigoplus\, (C\otimes I)$.
Note that the intersection of two coideals is also a coideal.
Given a subset $S\subset C$, the coideal generated by $S$ is 
$\langle S\rangle$ the smallest coideal of $C$ containing $S$.

\begin{proposition}
 If $f:A\to B$ is a coalgebra homomorphism then $\ker(f)$ is a coideal.
\end{proposition}
\begin{proof}
 Let $a\in \ker(f)$.  Then $(f\otimes f)\Delta(a) = \Delta f(a) = \Delta 0 = 0\otimes 0$.
 Thus $\Delta a \in (\ker(f)\otimes B)\,\bigoplus\,(B\otimes\ker(f)).$
\end{proof}

\begin{proposition}\label{P:coalg f(coideal)}
 If $f:A\to B$ is a coalgebras homomorphism and $X\subset A$ is a coideal, then
 $f(X) \subset B$ is a coideal.
\end{proposition}
\begin{proof}
Suppose $x\in X$ a coideal of $A$.  Then 
 $\Delta_A x \ \subset \ (X\otimes A)\,\bigoplus\,(A\otimes X)$ 
 so
 \[
 \Delta_B f(x) \ = \ (f \otimes f) \Delta_A x \ \subset \  
    \bigl(f(X)\otimes B\bigr)\,\bigoplus\,\bigl(B\otimes f(X)\bigr).
 \]
\end{proof}

\begin{proposition}\label{P:coalg i^{-1}coideal}
 If $f:A\to B$ is a coalgebras homomorphism and $Y\subset B$ is a coideal, then
 $f^{-1}Y \subset A$ is a coideal.
\end{proposition}
\begin{proof}
 Suppose $f(x) \in Y$ a coideal of $B$.  Then  
 \[
  (f\otimes f) \Delta_A x  \ =  \ \Delta_B f(x) \ \subset \ (Y\otimes B)\,\bigoplus\, (B\otimes Y).
 \]
Thus $\Delta_A x \ \subset \ (f^{-1}Y\otimes A)\, \bigoplus\, (A\otimes f^{-1}Y)$. 
\end{proof}

\begin{proposition}\label{P:coalg inj}
 If $i:A\rightarrowtail B$ is a coalgebra injection and $S\subset B$ is a subset, then 
 $\langle i^{-1}S\rangle_A = i^{-1}\langle S\rangle_B$.
\end{proposition}
\begin{proof}
 Suppose $i:A\rightarrowtail B$ and $S\subset B$.
 Since $S\subset i\langle i^{-1}S\rangle_A$ and 
 $i\langle i^{-1}S\rangle_A$ is a coideal 
 we have 
 $i\,i^{-1}\langle S\rangle_B \ = \ \langle S\rangle_B \ \subset \ i\langle i^{-1} S \rangle_A.$
 
 Similarly, 
 since $i^{-1}S \subset i^{-1}\langle S\rangle_B$ and 
 $i^{-1}\langle S\rangle_B$ is a coideal, 
 $\langle i^{-1}S \rangle_A \subset i^{-1}\langle S \rangle_B$.
 In particular 
 $i \langle i^{-1}S \rangle_A \subset i\,i^{-1}\langle S \rangle_B$.
 
 Thus $i\,i^{-1}\langle S\rangle_B  = i\langle i^{-1} S \rangle_A$.  The map $i$
 is an injection so this implies the desired result, 
 $i^{-1}\langle S\rangle_B  = \langle i^{-1} S \rangle_A$.
\end{proof}

\begin{remark}
 The statements above are dual to the following algebra facts:
 \begin{itemize}
 \item[\textbf{\ref{P:coalg f(coideal)}*}]
   If $f:A\to B$ is an algebra homomorphism and $Y\subset B$ is an ideal
   then $f^{-1}Y \subset A$ is an ideal.
 \item[\textbf{\ref{P:coalg i^{-1}coideal}*}]
   If $f:A\to B$ is an algebra homomorphism and $X\subset A$ is an ideal
   then $f(X) \subset f(A)$ is an ideal (of the image).
 \item[\textbf{\ref{P:coalg inj}*}]
   If $f:A\twoheadrightarrow B$ is a surjection of algebras and $S\subset A$
   is a subset then $f\bigl(\langle S\rangle_A\bigr) = \bigl\langle f (S)\bigr\rangle_B$.
 \end{itemize}
\end{remark}


\end{document}